\documentclass[a4paper,reqno,12pt]{amsart}
\synctex=1
%
\usepackage[utf8]{inputenc}

\usepackage{mathtools,thmtools}
\usepackage{tikz-cd}%

\usepackage{graphicx,color}
\usepackage{bm}
\usepackage{amsmath,amsthm,amstext,amsfonts,amsbsy}%
\usepackage{amssymb}
\usepackage{latexsym}%
\usepackage{todonotes}
\usepackage[margin=3cm]{geometry}
\usepackage{layout}
\usepackage[T1]{fontenc}
\usepackage{physics}
\usepackage{braket}

\usepackage[pagebackref]{hyperref}
\usepackage[capitalize]{cleveref}
\hypersetup{
     colorlinks = true,
     citecolor  = blue,
     linkcolor  = blue, 
     urlcolor   = blue, 
}
\usepackage{bookmark}

\usepackage{fancyhdr}
\pagestyle{plain}
\setlength{\footskip}{30pt}

\usepackage{enumitem}



\theoremstyle{definition}
\newtheorem{theorem}{Theorem}[section]
\newtheorem*{theorem*}{Theorem}
\newtheorem{definition}[theorem]{Definition}
\newtheorem{definition*}{Definition}
\newtheorem{example}[theorem]{Example}
\newtheorem*{example*}{Example}
\newtheorem{proposition}[theorem]{Proposition}
\newtheorem*{proposition*}{Proposition}

\newtheorem*{note*}{Note}

\newtheorem*{notice*}{Notice}

\newtheorem*{lemma*}{Lemma}

\newtheorem*{fact*}{Fact}

\newtheorem*{question*}{Question}
\newtheorem{conjecture}[theorem]{Conjecture}
\newtheorem*{conjecture*}{Conjecture}
\newtheorem{notation}[theorem]{Notation}
\newtheorem*{notation*}{Notation}
\newtheorem{corollary}[theorem]{Corollary}
\newtheorem*{corollary*}{Corollary}
\newtheorem{remark}[theorem]{Remark}
\newtheorem*{remark*}{Remark}

\newtheorem*{condition*}{Condition}

\newtheorem*{convention*}{Convention}

\newtheorem*{observation*}{Observation}

\newcommand{\deq}{\coloneqq}



\newcommand{\opn}[1]{\operatorname{#1}}
\newcommand{\catn}[1]{\mathbf{#1}}

\newcommand{\xto}[1]{\xrightarrow{#1}}

\newcommand{\hookto}{\hookrightarrow}

\newcommand{\simto}{ 
\mathrel{\raisebox{0.13em}{${\sim}$}}
\kern -0.75em \mathrel{\raisebox{-0.11em}{${\scriptstyle \to}$}}  
}


\renewcommand*{\backrefalt}[4]{%
\ifcase #1 %
\or        [Cited on p.#2.]%
\else      [Cited on pp.#2.]%
\fi}

\newcommand{\PD}[1]{[#1]_{\mathrm{PD}}}

\crefname{equation}{}{}
\crefname{conjecture}{Conjecture}{Conjectures}

\usepackage{mathrsfs}
\usepackage{upgreek}
\numberwithin{equation}{section}
\title{The complement of tropical curves in moderate position on
tropical surfaces}
\author[Y. Tsutsui]{Yuki Tsutsui}
\address{Graduate School of Mathematical Sciences,
The University of Tokyo, 3-8-1 Komaba, Meguro-Ku,
Tokyo, 153-8914, Japan}
\email{tyuki@ms.u-tokyo.ac.jp}

\begin{document}
\begin{abstract}
L\'opez de Medrano, Rinc\'on and Shaw defined 
the Chern classes on
tropical manifolds as an extension
of their theory of
the Chern--Schwartz--MacPherson cycles on matroids.
This makes it possible to define
the Riemann--Roch number of tropical Cartier divisors
on compact tropical manifolds.
In this paper,
we introduce the notion of a moderate position,
and discuss a conjecture
that the Riemann--Roch number
$\opn{RR}(X;D)$ of
a tropical submanifold $D$ of codimension $1$
in moderate position on
a compact tropical manifold
$X$ is equal to the topological Euler characteristic of 
the complement $X\setminus D$.
In particular, we prove it and 
its generalization when $\dim X=2$ and
$X$ admits a Delzant face structure.
\end{abstract}

\maketitle

\section{Introduction}
\subsection{Background}
L\'opez de Medrano, Rinc\'on and Shaw defined 
the Chern classes and the Todd classes on
tropical manifolds in \cite{demedrano2023chern}
as an extension
of their theory of
the Chern--Schwartz--MacPherson cycles on matroids
\cite{MR3999674}.
In particular, this makes it possible to define
the Riemann--Roch number $\opn{RR}(X;D)$
for any (sedentarity-0) tropical Cartier divisor $D$ on
a compact (purely) $n$-dimensional tropical manifold $X$;
\begin{align}
\label{equation-intro-rr}
\opn{RR}(X;D)\deq 
\int_{X}\opn{ch}(\mathcal{L}(D))\opn{td}(X)
\end{align}
where $\mathcal{L}(D)$ is the 
tropical line bundle associated with $D$,
$\opn{ch}(\mathcal{L}(D))\deq 
\sum_{i=0}^{\infty}\frac{c_1(\mathcal{L}(D))^{i}}{i!}$
is the Chern character of $\mathcal{L}(D)$
and $\int_X$ is the trace map of $X$.
We can extend the Riemann--Roch number for
tropical cycles of codimension $1$
by the Poincar\'e duality of tropical cohomology
(\cref{definition-rr-number}).
In algebraic geometry, the Riemann--Roch number
$\opn{RR}(X;D)$ of a given Cartier divisor $D$
on a smooth projective variety
corresponds to
the Euler characteristic of the sheaf cohomology
of the invertible sheaf 
associated with $D$ by the Hirzebruch--Riemann--Roch theorem
or the Grothendieck--Riemann--Roch theorem.

In tropical geometry, invertible sheaves,
particularly the structure sheaf
on tropical manifolds
(e.g. \cite[\textsection 1]{MR3330789}) 
do not form sheaves of Abelian groups.
Consequently, there is no \emph{direct}
analog of the Euler characteristic of 
the sheaf cohomology of 
tropical Cartier divisors now.
(A different tropical analog of the Euler characteristic of
line bundles is pursued in \cite{tsutsui2023graded} by using ideas
of the Strominger--Yau--Zaslow conjecture and
microlocal sheaf theory, but this approach is somewhat special.)

The tropical Riemann--Roch theorem for 
compact tropical curves was proved by 
both Gathmann--Kerber \cite{MR2377750}
and Mikhalkin--Zharkov \cite{MR2457739}
independently
as a generalization of the 
Riemann--Roch theorem for finite graphs
established by Baker--Norine \cite{MR2355607}. 
The generalization of these results and
their derivatives have developed in works such as 
\cite{MR3046301,MR4251610,MR4229604,MR4444458,MR4512397}.
However, researchers have not yet established
the tropical Riemann--Roch
theorem for higher-dimensional
compact tropical manifolds
as a direct generalization of the tropical
Riemann--Roch theorem for compact tropical curves
by \cite{MR2377750,MR2457739}.

Regardless of the specifics, it seems that understanding
the tropical geometric meaning of $\opn{RR}(X;D)$ is 
important.
At least, there exists a common interpretation
for a geometric meaning of the Riemann--Roch number
$\opn{RR}(X;0)$ for the trivial divisor.
In \cite[Conjecture 6.13]{demedrano2023chern},
L\'opez de Medrano, Rinc\'on and Shaw conjectured
that $\opn{RR}(X;0)$ for a compact 
tropical manifold $X$ is equal to the 
topological Euler characteristic 
$\chi(X)$ of $X$, 
and they proved it when 
$X$ is a compact tropical surface admitting
a Delzant face structure \cite[Theorem 6.3]{demedrano2023chern}.
They also proved a sufficiently broad range of 
tropical surfaces admitting Delzant face structures
\cite[Corollary 6.11]{demedrano2023chern}.
(The tropical Noether formula
was studied in \cite{shaw2015tropical} previously.
We also note a study of the Noether
formula for tropical complexes of dimension $2$
in \cite[Proposition 1.3]{cartwright2015combinatorial}).
Many researchers expect that the topological Euler 
characteristic of compact tropical manifolds
should be an analog of that of the sheaf cohomology
of the structure sheaves on complete
smooth algebraic varieties.
In fact, the two are highly related 
via a good degeneration of algebraic varieties
\cite[Corollary 2]{MR3961331}.
Besides, the Conjecture 6.13 of 
\cite{demedrano2023chern} is true for 
compact integral affine manifolds
from Klingler's proof of Chern's conjecture for
special affine manifolds \cite{MR3665000}
since the Todd class of integral affine manifold
is trivial.

In this paper, we discuss
a geometrical meaning of $\opn{RR}(X;D)$
when $D$ is in several non-trivial cases.
We mainly consider it when $X$ is
a compact tropical surface, but we expect 
that there exists a generalization for
higher-dimensional compact tropical manifolds.

\subsection{Main results}

Firstly, we recall some elementary properties
of divisors on algebraic varieties. 
To clarify the similarities we wish to investigate,
we will recall them under strong conditions.

Let $D$ be a nonsingular divisor on a nonsingular
algebraic variety $X$, and a morphism  
$\iota\colon D\to X$ the closed embedding.
Then, there exists the following exact sequence
of sheaves:
\begin{align}
     0\to \mathcal{O}_X(-D)\to 
\mathcal{O}_X\to \iota_*\mathcal{O}_D\to 0.
\end{align}
From this, we get the following equation for the Euler
characteristic of the sheaf cohomology:
\begin{align}
\label{equation-anti-effective-divisor}
\chi(X;\mathcal{O}_X(-D))=\chi(X;\mathcal{O}_X)
-\chi(D;\mathcal{O}_D).
\end{align}
From \eqref{equation-anti-effective-divisor},
the conjecture below is very natural and 
seems to be widely
believed among many researchers
before \cite{demedrano2023chern}, if
we mildly ignore a technical condition of 
tropical submanifolds 
\cite[Definition 2.14]{demedrano2023chern}
(see also
\cite[Definition 4.3]{shaw2015tropical})
and their definition of the Todd class of
tropical manifolds.

\begin{conjecture}
\label{conjecture-rr-c-euler}
Let $X$ be a compact
(purely) $n$-dimensional tropical manifold and
$D$ a tropical submanifold of codimension $1$
on $X$ 
\cite[Definition 2.14]{demedrano2023chern}.
Then,
\begin{align}
\opn{RR}(X;-D)=
\chi (X)-\chi(D)=
\chi (H^{\bullet}_c(X\setminus D;\mathbb{R}))
\end{align}
where $H_c^{\bullet}(X\setminus D;\mathbb{R})$ 
is the cohomology with compact support of $X\setminus D$.
\end{conjecture}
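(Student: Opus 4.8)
The plan is to follow the classical Grothendieck--Riemann--Roch computation attached to the closed embedding $\iota\colon D\hookto X$, transporting the algebraic identity \eqref{equation-anti-effective-divisor} to the tropical side. I would first separate the two asserted equalities. The right-hand equality $\chi(X)-\chi(D)=\chi(H^\bullet_c(X\setminus D;\R))$ is purely topological: since $D$ is closed in the compact space $X$ and $U\deq X\setminus D$ is its open complement, the long exact sequence
\begin{align*}
\cdots\to H^k_c(U)\to H^k(X)\to H^k(D)\to H^{k+1}_c(U)\to\cdots
\end{align*}
(using $H^\bullet_c(X)=H^\bullet(X)$ and $H^\bullet_c(D)=H^\bullet(D)$ by compactness) yields $\chi(H^\bullet_c(U))=\chi(X)-\chi(D)$ upon taking alternating sums of dimensions. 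This step is routine and independent of the tropical structure.

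The substance lies in the left-hand equality $\opn{RR}(X;-D)=\chi(X)-\chi(D)$. Writing $c\deq c_1(\mathcal{L}(D))$, so that $\opn{ch}(\mathcal{L}(-D))=e^{-c}$, and using the trivial-divisor case $\int_X\opn{td}(X)=\opn{RR}(X;0)=\chi(X)$ already known for surfaces with a Delzant face structure, the claim is equivalent to the tropical Gysin identity $\int_X\bigl(1-e^{-c}\bigr)\opn{td}(X)=\chi(D)$. I would prove this exactly as one proves Grothendieck--Riemann--Roch for a divisor. Since $c=[D]=\iota_*1$, one factors $1-e^{-c}=c\cdot\frac{1-e^{-c}}{c}$ and applies the projection formula together with the self-intersection relation $\iota^*c=c_1(N_{D/X})$, reducing the left-hand integral to
\begin{align*}
\int_D\frac{1-e^{-c_N}}{c_N}\,\iota^*\opn{td}(X),\qquad c_N\deq c_1(N_{D/X}).
\end{align*}
The tropical adjunction sequence $0\to T_D\to\iota^*T_X\to N_{D/X}\to 0$ then gives $\iota^*\opn{td}(X)=\opn{td}(D)\,\opn{td}(N_{D/X})$, and combining this with $\opn{td}(N_{D/X})=\frac{c_N}{1-e^{-c_N}}$ collapses the integrand to $\opn{td}(D)$, so that
\begin{align*}
\int_X\bigl(1-e^{-c}\bigr)\opn{td}(X)=\int_D\opn{td}(D)=\chi(D)
\end{align*}
by the trivial-divisor case applied to $D$.

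I expect the main obstacle to be establishing, in a form valid for tropical cohomology, the three geometric inputs underlying this computation: the identification $c_1(\mathcal{L}(D))=[D]$ of the first Chern class with the divisor cycle, a well-behaved Gysin pushforward $\iota_*$ satisfying the projection formula, and above all the tropical adjunction formula relating $\iota^*\opn{td}(X)$ to $\opn{td}(D)$ and the normal bundle $N_{D/X}$. Unlike the algebraic setting, tropical tangent and normal data are combinatorial objects built from the face structure, and the adjunction sequence $0\to T_D\to\iota^*T_X\to N_{D/X}\to 0$ can fail to be defined or exact when $D$ meets the strata of $X$ pathologically; this is precisely what the \emph{moderate position} hypothesis is designed to rule out, and verifying adjunction under it is the crux. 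Because the argument reduces everything to the trivial-divisor statement $\opn{RR}(\,\cdot\,;0)=\chi(\,\cdot\,)$ on both $X$ and $D$, which is currently available only for surfaces carrying a Delzant face structure, I would carry out the full verification in the case $\dim X=2$, where $D$ is a tropical curve and the self-intersection and normal-bundle data can be computed explicitly from the Delzant structure.
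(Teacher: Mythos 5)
Your proposal is sound in substance, and in the only case where it can actually be completed --- $\dim X=2$ with a Delzant face structure --- it amounts to the same verification the paper gives right after the conjecture: expand $\opn{RR}(X;-D)=\int_X e^{-c}\opn{td}(X)$, use the adjunction formula for tropical surfaces (\cite[Theorem 4.11]{shaw2015tropical}, \cite[Theorem 5.2]{demedrano2023chern}) to identify $\frac{1}{2}\opn{deg}(D.D+K_X.D)$ with $-\chi(D)$, invoke $\opn{RR}(X;0)=\chi(X)$ from \cite[Theorem 6.3]{demedrano2023chern}, and handle the equality $\chi(X)-\chi(D)=\chi_c(X\setminus D)$ by the long exact sequence in compactly supported cohomology. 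Your GRR-style reduction $\opn{RR}(X;-D)=\opn{RR}(X;0)-\opn{RR}(D;0)$ is likewise exactly the computation the paper carries out in \cref{example-sum-formula} (the case $D'=0$ of \cref{equation-rr-number-divisor}). Two corrections, however. First, your intermediate objects --- the normal bundle $N_{D/X}$, the exact sequence $0\to T_D\to\iota^{*}T_X\to N_{D/X}\to 0$, and $\opn{td}(N_{D/X})$ --- do not exist in the tropical category; the paper stresses that the CSM classes are \emph{not} built from tangent bundles, and the correct substitute for your adjunction step is the purely cohomological identity $\iota^{*}c^{\mathrm{sm}}(X)=c^{\mathrm{sm}}(D)(1+\iota^{*}\PD{D})$ of \cref{conjecture-grr-divisor}, which is a theorem only in dimension $\leq 2$ (where it restates the surface adjunction formula) and is open beyond that. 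So in general dimension your argument is conditional in exactly the same way as the paper's \cref{proposition-euler-to-bertini}-type reductions, not a proof. Second, you misplace the moderate position hypothesis: \cref{conjecture-rr-c-euler} imposes no such condition, and the adjunction formula for codimension-$1$ tropical submanifolds of surfaces needs none; moderate position (a condition on lineality spaces, \cref{definition-permissible-position}) is relevant only to the dual conjecture \cref{conjecture-rr-euler} about $\opn{RR}(X;D)$ and $\chi(X\setminus D)$, where the complement's topology genuinely depends on how $D$ sits in the strata. A minor further point: the trivial-divisor input $\opn{RR}(D;0)=\chi(D)$ for the curve $D$ is elementary and unconditional, so only the statement for the ambient surface $X$ requires the Delzant hypothesis.
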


\begin{example}
If $\dim X=1$, then 
\cref{conjecture-rr-c-euler} is trivial.
Let $\dim X=2$ and $K_X$ the canonical cycle of
$X$ \cite[Definition 5.8]{MR2275625}.
The adjunction formula for tropical submanifold
of codimension $1$ on $X$
(\cite[Theorem 6]{shaw2015tropical} or 
\cite[Theorem 5.2]{demedrano2023chern}) gives
\begin{align}
\opn{RR}(X;-D)=\frac{\opn{deg}(D.D+K_X.D)}{2}+\opn{RR}(X;0)
=-\chi(D)+\opn{RR}(X;0).
\end{align}
where $K_X.D$ is the intersection of 
$K_X$ and $D$ in the meaning of \cite{shaw2015tropical}.
Therefore, if $X$ admits a Delzant face structure,
then \cref{conjecture-rr-c-euler} is true
by \cite[Theorem 6.3]{demedrano2023chern}.
(In \cref{proposition-cycle-chern}, we will see the compatibility of different definition of
intersection numbers which is needed.)
\end{example}

The main conjecture of this paper is the following 
which gives
the geometric meaning of $\opn{RR}(X;D)$, i.e.,
the dual of $\opn{RR}(X;-D)$.

\begin{conjecture}
\label{conjecture-rr-euler}
Let $D$ be a tropical submanifold of codimension 
$1$ of an $n$-dimensional compact tropical manifold $X$
\cite[Definition 2.14]{demedrano2023chern}.
If $D$ is in \emph{moderate position}
(\cref{definition-permissible-position}) on $X$,
then 
\begin{align}
\opn{RR}(X;D)=\sum_{k=0}^{\infty}\chi(|D^k|)
=\chi(H^{\bullet}(X\setminus D;\mathbb{R}))
\end{align}
where $|D^{k}|$ is the support of the $k$-th power
of $D$ in $X$ (\cref{notation-power-divisor}).
\end{conjecture}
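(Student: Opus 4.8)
The plan is to separate the two asserted equalities: the right-hand identity $\sum_{k=0}^{\infty}\chi(|D^k|)=\chi(H^{\bullet}(X\setminus D;\R))$ is purely topological, whereas the left-hand identity $\opn{RR}(X;D)=\sum_{k=0}^{\infty}\chi(|D^k|)$ is where tropical intersection theory and the Todd class enter. I would prove each in turn and then concatenate them, keeping the argument uniform in $n$ rather than specializing to surfaces.

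For the topological identity I would first extract from the moderate-position hypothesis (\cref{definition-permissible-position}) a local model for $D$: near a point lying on exactly $k$ local sheets, $D$ should look like $k$ coordinate hyperplanes in general position, so that the self-intersection supports $X=|D^0|\supseteq|D|=|D^1|\supseteq|D^2|\supseteq\cdots$ (\cref{notation-power-divisor}) form a nested chain of compact tropical submanifolds with $|D^k|$ of codimension $k$ and with each open stratum $|D^k|\setminus|D^{k+1}|$ carrying a product structure in the transverse directions. Given this stratification, $\chi(X\setminus D)$ is computed by additivity of the Euler characteristic over the strata together with Poincar\'e--Lefschetz duality on the open set $X\setminus D$; the transverse factors are exactly what collapse the stratum sum onto the chain $\{|D^k|\}_k$. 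The delicate point is sign bookkeeping: the local toric model contributes alternating signs that must be matched against the signs and multiplicities carried by the tropical self-intersection cycles, and checking this compatibility is what actually pins the identity down.

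For the arithmetic identity I would expand $\opn{ch}(\mathcal{L}(D))=\sum_{k=0}^{\infty}\tfrac{1}{k!}c_1(\mathcal{L}(D))^{k}$ and identify $c_1(\mathcal{L}(D))$ with the class of $D$, giving $\opn{RR}(X;D)=\sum_{k}\tfrac{1}{k!}\int_X c_1(\mathcal{L}(D))^{k}\,\opn{td}(X)$. The key reduction is that, in moderate position, the multinomial structure of the corner locus of a transverse coordinate arrangement makes the $k$-th self-intersection equal to $k!$ times the reduced fundamental cycle of $|D^k|$; this is the mechanism that cancels the factorials. It remains to evaluate each $\int_{|D^k|}\opn{td}(X)\big|_{|D^k|}$, and here I would run an induction on codimension using the tropical adjunction formula (\cite[Theorem 6]{shaw2015tropical}, \cite[Theorem 5.2]{demedrano2023chern}): restriction of $\opn{td}(X)$ to $|D^k|$ produces $\opn{td}(|D^k|)$ together with normal-direction Todd factors, and I would show that after regrouping the double sum over $k$ and over the Todd-expansion degree these normal factors telescope, leaving precisely $\sum_k \opn{RR}(|D^k|;0)$. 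Finally the tropical Gauss--Bonnet statement $\opn{RR}(Y;0)=\chi(Y)$ turns each summand into $\chi(|D^k|)$.

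The hard part will be this last input. The equality $\opn{RR}(Y;0)=\chi(Y)$ is \cite[Conjecture 6.13]{demedrano2023chern}, which is established only for compact tropical surfaces admitting a Delzant face structure (\cite[Theorem 6.3]{demedrano2023chern}); the induction above needs it for every self-intersection stratum $|D^k|$, which are tropical submanifolds of all intermediate dimensions. Thus the scheme is genuinely $n$-dimensional but conditional on the general tropical Gauss--Bonnet, and it closes unconditionally precisely when that input is available: for $\dim X=2$ with a Delzant face structure, every $|D^k|$ is a point, a curve, or $X$ itself, the required instances of the conjecture are known, and the proof goes through. A secondary obstacle, which I expect to be controllable through the same local coordinate-arrangement model, is verifying that moderate position really forces each $|D^k|$ to be a compact tropical manifold to which adjunction and the Todd-class machinery apply, and reconciling the signs flagged above between the analytic expansion and the topological inclusion--exclusion; this is where the technical weight of the argument sits.
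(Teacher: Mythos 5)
Your high-level architecture---split the two equalities, obtain the arithmetic one by a Todd-class telescoping that is conditional on the tropical Gauss--Bonnet statement \cite[Conjecture 6.13]{demedrano2023chern}, and note that everything closes unconditionally for Delzant surfaces---does mirror the paper's logic (the telescoping is carried out in \cref{example-sum-formula} assuming \cref{conjecture-grr-divisor}; the topological equality is \cref{theorem-euler-power}; the surface case is \cref{theorem-rr-euler-surface} and \cref{corollary-ds-euler-rr}). But both of your key mechanisms are wrong. First, the local model: a tropical submanifold in moderate position does \emph{not} look locally like $k$ coordinate hyperplanes in general position. It is locally \emph{irreducible}, of the form $L_M\times L_{U_{r,r+1}}\subset L_M\times L_{U_{r+1,r+1}}$ (\cref{definition-relatively-uniform}), a cone over the uniform-matroid fan, and the nested supports are $|D^k|\simeq L_M\times L_{U_{r-k+1,r+1}}$ locally. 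The count that makes the topological identity true is that the complement of $L_{U_{r,r+1}}$ in $\mathbb{R}^r$ has exactly $r+1$ chambers, matching $\sum_{k=0}^{r}1_{|D^k|}(x)=r+1$; this is precisely the stalkwise computation $\chi\bigl((j_*\mathbb{R}_{X\setminus D})_x\bigr)=\sum_{k}1_{|D^k|}(x)$ in \cref{equation-local-index} that drives the paper's proofs. In your normal-crossing model the identity fails already stalkwise: two transverse lines in $\mathbb{R}^2$ give $4$ local chambers while $1_{|D^0|}+1_{|D^1|}+1_{|D^2|}=3$ at the crossing. (Relatedly, Poincar\'e--Lefschetz duality on $X\setminus D$ is not available, since $X\setminus D$ is not a topological manifold in general; the paper replaces it by constructible-sheaf Euler calculus, pushing $Rj_*\mathbb{R}_{X\setminus D}$ forward and integrating local Euler--Poincar\'e indices.)

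Second, your factorial-cancellation mechanism is false: the $k$-th self-intersection of $D$ is \emph{not} $k!$ times the reduced cycle on $|D^k|$. By \cite[Example 3.9]{MR2591823}, as used in the paper, $(L_{U_{r,r+1}})^{k}=L_{U_{r-k+1,r+1}}$ with multiplicity $1$ (tropical B\'ezout for a locally degree-$1$ hypersurface), so $D^k=[|D^k|]$ with multiplicity one. A classical sanity check shows your normalization cannot work: for a line $H$ in $\mathbb{P}^2$ one has $\sum_k\int_{|H^k|}\opn{td}(X)|=1+\tfrac32+1=\tfrac72\neq 3=\opn{RR}(X;H)$, and even with the correct multiplicities the individual terms $\int_{|D^k|}\opn{td}(X)|$ are not $\chi(|D^k|)$ (the $k=1$ term is $\tfrac32$, not $1$). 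The factorials in $\opn{ch}$ are instead absorbed by Hirzebruch's identity $\opn{exp}(\PD{D})=\sum_{j}(1-\opn{exp}(-\PD{D}))^{j}$ together with the virtual $T$-genus telescoping of \cref{equation-rr-reduction}, which iterates the \emph{codimension-one} inclusions $D^{(k+1)}\subset|D^{(k)}|$ of \cref{notation-power-divisor} (since \cref{conjecture-grr-divisor} is formulated only for codimension-one submanifolds, not for one-shot restriction to $|D^k|\subset X$). Finally, note that for the case the paper actually proves ($\dim X=2$, Delzant), no Todd telescoping is used at all: \cref{theorem-rr-euler-surface} computes $\chi(X\setminus D)$ directly from the classification of local cones on surfaces and concludes via the adjunction formula and \cite[Theorem 6.3]{demedrano2023chern}. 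So your conditional scheme is the right shape, but the two technical pillars supporting it---the local geometry and the multiplicity bookkeeping---must be replaced as above before the argument can stand.
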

The first equation of \cref{conjecture-rr-euler}
is a certain tropical analog of
\cite[\textsection 20.6. (14)]{MR1335917},
and we will explain it in \cref{example-sum-formula,remark-grothendieck-group}.
We believe that other researchers also expect 
\cref{conjecture-rr-c-euler,conjecture-rr-euler} for 
the reason that,
when $X \deq \mathbb{T}P^n$,
for the tropical hypersurface $D \deq V(F)$
defined by a $d$-degree tropical homogeneous
smooth polynomial $F$, 
\cref{conjecture-rr-c-euler,conjecture-rr-euler}
follow as a consequence of facts
that are classically known.
We will see it in \cref{example-TPn}.
However,
\cref{conjecture-rr-euler} is more non-trivial than
\cref{conjecture-rr-c-euler}.
In fact, many tropical submanifolds are in moderate position,
but we can easily construct of tropical submanifolds
which are not in moderate position.
We also remark that the first equation of
\cref{conjecture-rr-euler} should hold for more general
cases.

\begin{example}
\label{example-permissible-point}
Let $C$ be a compact tropical curve 
and $C_{\mathrm{reg}}$ the set of all
points whose valency are $2$.
A point $p$ on $C$ is in moderate position
if and only if $p\in C_{\mathrm{reg}}$.

From easy calculation, for any 
tropical submanifold $D$ of codimension $1$, i.e.,
finite subsets contained in $C_{\mathrm{reg}}$, we get
\begin{align}
\opn{RR}(C;D)=\sharp (D)+ \chi(C)
=\chi(C\setminus D).
\end{align}
Therefore, \cref{conjecture-rr-euler} is true
for $C$. The first equation above also holds when
$D$ is not in moderate position.
\end{example}
In \cref{proposition-divisor-poincare,remark-iass},
we will see other examples for evidence 
why it seems that \cref{conjecture-rr-euler}
is true. Moreover, we also prove
the second equation of 
\cref{conjecture-rr-euler} when $D$ is 
relatively uniform on $X$ in
\cref{theorem-euler-power}.
One of interesting points of
\cref{conjecture-rr-c-euler,conjecture-rr-euler} is that
the complement of a tropical submanifold in
a tropical manifold is \emph{not} usually
considered an analog of Zariski open subset
of algebraic variety, but related with the
Riemann--Roch number.
For instance, the complement of two points 
on a tropical elliptic curve
$\mathbb{R}/\mathbb{Z}$ is disconnected.
This feature is far from that of the analytification of Zariski open subsets of
algebraic varieties.
(In fact, Mikhalkin defined
a tropical analog of the open subset of
a given polynomial as the complement of
the full graph \cite[\textsection 3.3]{MR2275625}
of a tropical polynomial in
\cite[Remark 3.5 and Example 3.6]{MR2275625}.)
On the other hand, in \cite{MR3498901,MR3968872}, 
interesting studies have been conducted on
the complements of tropical varieties in
$\mathbb{R}^n$ based on motivations
different from our paper.

Incidentally, the author arrived at
\cref{conjecture-rr-euler} as
a derivation from the study in
\cite{tsutsui2023graded} in order to
construct a concrete method of creating
permissible $C^{\infty}$-divisors on
compact tropical manifolds which
satisfies \cite[Conjecture 1.2]{tsutsui2023graded}. 
We will see relationships between 
this paper and \cite{tsutsui2023graded}
in \cref{remark-c-infinity-divisor}.
The relationships which are
remarked in \cref{remark-c-infinity-divisor}
suggest that the cohomology of the complement
of a tropical submanifold of codimension $1$
contains a piece of data
of a homological invariant of the line bundle
associated with it.

One of main theorem in this paper
is the following, and its proof is not difficult:
\begin{theorem}[{Main theorem}]
\label{theorem-rr-euler-surface}
Let $D$ be a tropical submanifold of codimension $1$
in moderate position on a compact tropical surface
$X$. Then,
\begin{align}
\chi(X\setminus D)=\frac{\opn{deg}((D-K_X).D)}{2}+
\chi(X).
\end{align}
\end{theorem}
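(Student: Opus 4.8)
The plan is to compute $\chi(X\setminus D)$ purely topologically and to recognise the right-hand side through the tropical adjunction formula, so that the Todd-class machinery and the Riemann--Roch number are not needed for this statement. First I would pass to the compactly supported Euler characteristic: since $X$ and $D$ are compact, additivity of $\chi_c$ along the decomposition into $X\setminus D$ and $D$ gives $\chi_c(X\setminus D)=\chi(X)-\chi(D)$. The open set $X\setminus D$ is not compact, so its ordinary Euler characteristic differs from $\chi_c(X\setminus D)$ by the Euler characteristic of the link of $D$. Concretely, I would choose a regular neighbourhood $N$ of $D$ in $X$, observe that $X\setminus D$ deformation retracts onto $X\setminus\opn{int}(N)$ by collapsing the punctured normal directions, and apply inclusion--exclusion to get
\[
\chi(X\setminus D)=\chi(X)-\chi(D)+\chi(\partial N),
\]
where $\partial N$ is the boundary of the neighbourhood, i.e.\ the link of $D$ in $X$.

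Next I would rewrite the target using the tropical adjunction formula recalled in the example following \cref{conjecture-rr-c-euler}, namely $\opn{deg}((D+K_X).D)=-2\chi(D)$ (\cite{shaw2015tropical,demedrano2023chern}). Combining this with $\opn{deg}((D-K_X).D)=\opn{deg}((D+K_X).D)-2\opn{deg}(K_X.D)$ shows that the asserted formula is equivalent to the single identity $\chi(\partial N)=-\opn{deg}(K_X.D)$. Thus the entire content of the theorem is the computation of the Euler characteristic of the link of $D$, which I would carry out by localising along $D$.

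The key geometric observation is that the extra term is forced by the non-smooth locus of $X$. Over the smooth locus $X_{\opn{reg}}$, where $X$ is an honest $2$-manifold, a regular neighbourhood of the graph $D\cap X_{\opn{reg}}$ is a compact surface with boundary and $\partial N$ restricts there to a $1$-manifold, so it contributes $0$ to the Euler characteristic; in particular the trivalent vertices of $D$ lying in $X_{\opn{reg}}$ are already accounted for by the term $-\chi(D)$. Hence $\chi(\partial N)$ is concentrated at the finitely many crossing points $p\in D\cap\opn{Sing}(X)$, and here the moderate position hypothesis is \emph{essential}: it guarantees that $D$ meets the singular edges of $X$ transversally, at points that are regular (valency $2$) on $D$ and interior to the edges of $\opn{Sing}(X)$, away from the vertices of $X$. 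At such a $p$ the surface $X$ has the standard local model (a balanced $1$-dimensional fan)$\,\times\,\R$, and I would read off the local link from this model, obtaining a local Euler-characteristic defect $\chi_p$ determined by the combinatorial weight of the singular edge through $p$, so that $\chi(\partial N)=\sum_{p}\chi_p$.

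The main obstacle is the last matching step: identifying $\sum_p\chi_p$ with $-\opn{deg}(K_X.D)$. The canonical cycle $K_X$ of a tropical surface is supported exactly on the non-smooth codimension-$1$ skeleton $\opn{Sing}(X)$ with explicit weights, so under moderate position $\opn{deg}(K_X.D)$ is a weighted count over the same points $p$; the task is to verify, case by case on the local fans, that the topological defect $\chi_p$ of the link equals minus the canonical intersection weight at $p$ in the sense of \cite{shaw2015tropical}. A secondary technical point, also controlled by moderate position, is to make the regular neighbourhood and the retraction of the first step precise in the non-smooth (polyhedral) category, so that the inclusion--exclusion computation remains valid across $\opn{Sing}(X)$.
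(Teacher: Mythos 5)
Your reduction has a sound skeleton, and it is packaged differently from the paper (which computes $\chi(X\setminus D)$ sheaf-theoretically, from the stalks of $Rj_*\mathbb{R}_{X\setminus D}$ and Euler calculus, rather than from a regular neighbourhood): the identity $\chi(X\setminus D)=\chi(X)-\chi(D)+\chi(\partial N)$ is valid in the PL category, and combining it with the adjunction formula $\opn{deg}((D+K_X).D)=-2\chi(D)$ does reduce the theorem to the single identity $\chi(\partial N)=-\opn{deg}(K_X.D)$. The problem is that everything after this reformulation is exactly where the content of the theorem lies, and you leave it undone: the ``case by case on the local fans'' verification that you defer is precisely what the paper's \cref{proposition-self-intersection} and the stalk computation in its proof of \cref{theorem-rr-euler-surface} accomplish. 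As written, your argument establishes only the (easy) equivalence, not the theorem.

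Worse, the local picture you propose to verify is incorrect on two counts, so the deferred verification would start from wrong models. First, at a point $p\in D\cap X_{\mathrm{sing}}$ with local model $L_{U_{2,m}}\times\mathbb{R}$, moderate position forces $\opn{lineal}(D,p)=\{0\}$, so $p$ is a \emph{vertex} of $D$, not a valency-$2$ point of $D$ as you assert; indeed a valency-$2$ crossing is impossible, since balancing of two rays lying in distinct half-planes would force them to be antiparallel, which cannot happen for $m\geq 3$ (the ray directions $e_1,\ldots,e_m$ of $L_{U_{2,m}}$ satisfy $\sum_i e_i=0$ with every proper subset linearly independent). The correct model, which is the content of the paper's classification, is that $D$ has exactly one edge in each of the $m$ sheets, so $\opn{val}_D(p)=m\geq 3$; the local defect is then $\chi_p=2-m_p$, matching $-(m_p-2)$. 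Second, you ignore the boundary at infinity: $K_X$ carries weight $-1$ along $X_{\infty}$, and a tropical submanifold in moderate position can meet $X_{\infty}$. For instance, for a tropical line $D$ in $X=\mathbb{T}P^2$ there are \emph{no} interior singular edges at all, yet $\chi(\partial N)=3$, carried entirely by the six valency-one endpoints of $\partial N$ lying over the three points of $D\cap X_{\infty}$, matching $-\opn{deg}(K_X.D)=3$; your localization ``concentrated at $D\cap\opn{Sing}(X)$ with fan$\times\mathbb{R}$ models'' would give $0$ there and the claimed identity would fail. So the approach can be completed, but only after redoing the local classification (including the sedentarity-positive points) that constitutes the paper's actual proof.
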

From \cite[Theorem 6.3]{demedrano2023chern}
and \cref{theorem-rr-euler-surface}, we get
the following corollary:
\begin{corollary}
\label{corollary-ds-euler-rr}
If $X$ is a compact tropical surface admitting 
a Delzant face structure, then
\cref{conjecture-rr-euler} is true.
\end{corollary}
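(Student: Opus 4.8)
The plan is to compute $\chi(X\setminus D)$ by comparing it with the compactly supported Euler characteristic $\chi_c(X\setminus D)$ and isolating the difference as a purely local contribution along the codimension-$1$ skeleton of $X$, where the canonical cycle $K_X$ is supported. Since $\chi(X\setminus D)=\chi(H^{\bullet}(X\setminus D;\R))$ and $\chi_c(X\setminus D)=\chi(H^{\bullet}_c(X\setminus D;\R))$, I would work entirely with singular (co)homology and the polyhedral structure of $X$, so that the statement becomes a bookkeeping of Euler characteristics on a compact polyhedron.

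First I would record that, as $X$ is compact and $D\subset X$ is a compact closed subpolyhedron with open complement $X\setminus D$, elementary additivity of the compactly supported Euler characteristic gives
\begin{align*}
\chi_c(X\setminus D)=\chi_c(X)-\chi_c(D)=\chi(X)-\chi(D),
\end{align*}
which is exactly the additivity underlying the compactly supported side of \cref{conjecture-rr-c-euler} and requires nothing about moderate position. Next, the adjunction formula for a codimension-$1$ tropical submanifold on a tropical surface (\cite{shaw2015tropical,demedrano2023chern}), recalled in the example following \cref{conjecture-rr-c-euler}, yields $\tfrac{1}{2}\opn{deg}(D.D+K_X.D)=-\chi(D)$, so that
\begin{align*}
\chi(X)-\chi(D)=\chi(X)+\tfrac{1}{2}\opn{deg}(D.D+K_X.D).
\end{align*}

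The heart of the argument is the relation between the two Euler characteristics. Choosing a regular neighborhood $N$ of $D$ in $X$ — which exists because $X$ is a polyhedral complex and, by moderate position (\cref{definition-permissible-position}), $D$ is a subpolyhedron meeting the strata of $X$ transversally — the punctured neighborhood $N\setminus D$ collars onto $L\deq\partial N$, so that $X\setminus D$ deformation retracts onto $X\setminus\mathring N$. An inclusion--exclusion (Mayer--Vietoris) computation on $X=(X\setminus\mathring N)\cup N$ with $N\simeq D$ then gives
\begin{align*}
\chi(X\setminus D)=\chi_c(X\setminus D)+\chi(L).
\end{align*}
If $X$ were a topological surface, $L$ would be a disjoint union of circles and $\chi(L)$ would vanish; the entire correction comes from $X$ being non-manifold along its codimension-$1$ skeleton, which is precisely what $K_X$ records. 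The main step, and the one I expect to be the real obstacle, is the local identity
\begin{align*}
\chi(L)=-\opn{deg}(K_X.D).
\end{align*}
Here moderate position guarantees that $D$ crosses the ridges of $X$ transversally at finitely many points, each in the relative interior of a ridge. Over the part of $D$ interior to the two-dimensional faces, $L$ is locally a circle bundle and contributes $0$; at a transverse crossing of a ridge along which $m$ two-dimensional faces meet, the graph $L$ acquires a vertex whose valence is governed by the same local fan data that defines the weight of $K_X$, and a case check on the explicit local models shows the contribution to $\chi(L)$ is exactly minus the local intersection multiplicity of $K_X$ and $D$. Summing over crossings yields the displayed identity.

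Combining the three ingredients,
\begin{align*}
\chi(X\setminus D)
&=\chi(X)+\tfrac{1}{2}\opn{deg}(D.D+K_X.D)-\opn{deg}(K_X.D)\\
&=\chi(X)+\tfrac{1}{2}\opn{deg}(D.D-K_X.D)
=\frac{\opn{deg}((D-K_X).D)}{2}+\chi(X),
\end{align*}
which is the assertion. The only delicate point is the sign- and weight-bookkeeping in the link computation at the ridge crossings; everything else is formal additivity of Euler characteristics together with the already-established adjunction formula, which is why the proof is short.
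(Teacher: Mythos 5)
You have proved (modulo the issue below) the wrong statement. Your chain of identities ends at the formula of \cref{theorem-rr-euler-surface}, $\chi(X\setminus D)=\tfrac{1}{2}\opn{deg}((D-K_X).D)+\chi(X)$, which is a purely topological/intersection-theoretic identity, whereas \cref{corollary-ds-euler-rr} asserts \cref{conjecture-rr-euler}, i.e.\ the two equalities $\opn{RR}(X;D)=\sum_{k\geq 0}\chi(|D^k|)=\chi(X\setminus D)$. Nothing in your argument mentions the Riemann--Roch number, and you never use the Delzant hypothesis --- a red flag, since that hypothesis is exactly what the missing step requires. To finish one must (i) expand $\opn{RR}(X;D)=\int_X\opn{ch}(\PD{D})\opn{td}(X)$ on a surface to get $\opn{RR}(X;D)=\tfrac{1}{2}\opn{deg}(D.(D-K_X))+\opn{RR}(X;0)$, which needs the compatibility of the intersection pairings of \cite{shaw2015tropical,demedrano2023chern,MR4637248} (\cref{proposition-cycle-chern}); (ii) invoke \cite[Theorem 6.3]{demedrano2023chern}, which gives $\opn{RR}(X;0)=\chi(X)$ precisely for compact tropical surfaces admitting a Delzant face structure; and (iii) identify the middle term, $\sum_{k\geq 0}\chi(|D^k|)=\chi(X)+\chi(D)+\chi(|D^2|)$, with the quantity your computation produces. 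This combination of \cref{theorem-rr-euler-surface}, \cref{proposition-cycle-chern} and \cite[Theorem 6.3]{demedrano2023chern} is exactly how the paper deduces the corollary.

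Second, the step you yourself single out as the heart of the argument fails as a local statement. Under moderate position the balancing condition forces $D$ to have exactly one ray in each of the $m$ facets adjacent to a ridge it crosses (this is shown in the proof of \cref{proposition-self-intersection}: one gets $\opn{LC}_xD\simeq L_{U_{2,m}}$, not $L_{U_{2,v}}$ with $v<m$), and then the fiber of $\partial N\to D$ over such a crossing is the link of $x$ in $X$ --- a bipartite graph with $m+2$ vertices and $2m$ edges --- with $m$ open arcs removed, whose Euler characteristic is $(2-m)+m=2$, the same as the generic fiber $S^0$. So ridge crossings contribute \emph{zero} correction to $\chi(L)$, not minus the local multiplicity of $K_X.D$ (which is $m-2>0$ there). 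The genuine corrections occur at the trivalent vertices of $D$ inside $X_{\mathrm{reg}}$, i.e.\ at $|D^2|$, each contributing $+1$, which gives $\chi(L)=2\chi(D)+\chi(|D^2|)$; this equals $-\opn{deg}(K_X.D)$ only after invoking the adjunction formula together with $\opn{deg}(D.D)=\chi(|D^2|)$, so your plan to verify the identity by a case check at the ridges cannot succeed: it is a global consequence of adjunction, not a sum of local identities along the support of $K_X$. With the corrected localization, your neighborhood argument does reproduce the identity $\chi(X\setminus D)=\chi(X)+\chi(D)+\chi(|D^2|)$, which the paper obtains instead from the stalks of $j_*\mathbb{R}_{X\setminus D}$ (Euler calculus), and adjunction then yields \cref{theorem-rr-euler-surface}; but the corollary itself still needs the Riemann--Roch input described above.
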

From this point, 
we will also consider an extension of both
\cref{conjecture-rr-c-euler,conjecture-rr-euler}.
We recall well-known properties in algebraic geometry again.
Let $X$ be a nonsingular projective variety,
$D$ a nonsingular divisor on $X$ and
$\iota\colon D\to X$ the inclusion of $D$.
For any divisor $D'$ on $X$, we have
the short exact sequence:
\begin{align}
0 \to \mathcal{O}_X(D'-D)\to \mathcal{O}_X(D')
\to \mathcal{O}_X(D')
\otimes_{\mathcal{O}_X} \iota_*\mathcal{O}_D \to 0. 
\end{align}
From the projection formula for locally free sheaves,
we get 
\begin{align}
\chi(X;\mathcal{O}_X(D')\otimes_{\mathcal{O}_X} \iota_*\mathcal{O}_D)
=\chi(X;\iota_*(\iota^{*}\mathcal{O}_X(D')\otimes_{\mathcal{O}_D} \mathcal{O}_D))
=\chi(D;\iota^{*}\mathcal{O}_X(D')).
\end{align}

If $D'$ is nonsingular and the intersection
$D'\cap D$ is a nonsingular effective divisor
on $D$, then
\begin{align}
\chi(D;\iota^{*}\mathcal{O}_X(D'))=\chi(D;\mathcal{O}_D(D'\cap D)).
\end{align}
It may seem that 
the assumption for a pair $(D,D')$ above is
too strict.
However, through the application of the theorem of
Bertini, we can select a pair $(D,D')$ of divisors 
satisfying the condition above 
and $D_0\sim D'-D$ for a given divisor $D_0$ on $X$.
For such a pair, the following equation holds:
\begin{align}
\chi(X;\mathcal{O}_X(D'-D))=
\chi(X;\mathcal{O}_X(D'))-
\chi(D;\mathcal{O}_D(D'\cap D)).
\end{align}
By combining the observation and \cref{conjecture-rr-euler}, 
we can also expect the following conjecture:
\begin{conjecture}
\label{conjecture-rr-bertini}
Let $X$ be a compact $n$-dimensional
tropical manifold. 
Let $D,D'$ be tropical submanifolds of codimension $1$
on $X$ or empty.
Assume $D$ and $D'$ satisfy the following conditions:
\begin{enumerate}
\item $D'$ is in moderate position on $X$.
\item The restrction $D'|_{D}$ of $D'$ on $D$ is 
a tropical submanifold of $D$ such that its support is   
$D'\cap D$.
\item $D'\cap D$ is in moderate position on $D$.
\end{enumerate}
Then,
\begin{align}
\label{equation-rr-bertini}
\opn{RR}(X;D'-D)=\sum_{k=0}^{\infty}\chi(|D'^k|)-
\sum_{k=0}^{\infty}\chi(|(D'|_{D})^k|)
=&\chi(H^{\bullet}(X\setminus D',D\setminus D';\mathbb{R})) \notag \\
(=&\chi(X\setminus D')-\chi(D\setminus (D'\cap D)))
\end{align}
where $H^{\bullet}(X\setminus D',D\setminus D';\mathbb{R})$
is the relative cohomology of the pair
$(X\setminus D',D\setminus D')$.
\end{conjecture}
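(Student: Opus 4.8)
The plan is to realize the identity as the tropical shadow of the short exact sequence recalled just above, reducing it to an additivity statement for Riemann--Roch numbers together with the two instances of \cref{conjecture-rr-euler} that are already at our disposal. The conceptual backbone is the equality
\[
\opn{RR}(X;D'-D)=\opn{RR}(X;D')-\opn{RR}(D;D'|_{D}),
\]
the exact analog of $\chi(X;\mathcal{O}_X(D'-D))=\chi(X;\mathcal{O}_X(D'))-\chi(D;\mathcal{O}_D(D'\cap D))$. Granting it, I would feed in \cref{conjecture-rr-euler} for the pair $(X,D')$ --- legitimate because $D'$ is in moderate position by hypothesis (1) --- and \cref{conjecture-rr-euler} for the pair $(D,D'|_{D})$ --- legitimate because hypotheses (2) and (3) say precisely that $D'|_{D}$ is a genuine tropical submanifold of $D$, with support $D'\cap D$, in moderate position on $D$. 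These two substitutions rewrite the right-hand side simultaneously as $\sum_{k}\chi(|D'^{k}|)-\sum_{k}\chi(|(D'|_{D})^{k}|)$ and as $\chi(X\setminus D')-\chi(D\setminus(D'\cap D))$, which is the full content of \eqref{equation-rr-bertini} once the relative cohomology is matched.

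I would carry this out rigorously when $\dim X=2$ with $X$ a Delzant surface, where both inputs are theorems: \cref{corollary-ds-euler-rr} supplies \cref{conjecture-rr-euler} for $(X,D')$, and \cref{example-permissible-point} supplies it for the curve $(D,D'|_{D})$, the moderate-position hypothesis (3) being exactly the condition that the points of $D'|_{D}$ lie in $D_{\mathrm{reg}}$. Here the additivity identity need not be quoted abstractly; instead I would compute all three terms directly. The degree-$2$ Hirzebruch--Riemann--Roch expansion gives $\opn{RR}(X;D'-D)=\tfrac{1}{2}\opn{deg}((D'-D).(D'-D-K_X))+\opn{RR}(X;0)$, and $\opn{RR}(X;0)=\chi(X)$ by \cite[Theorem 6.3]{demedrano2023chern}. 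On the other side, \cref{theorem-rr-euler-surface} evaluates $\chi(X\setminus D')=\tfrac{1}{2}\opn{deg}((D'-K_X).D')+\chi(X)$, while the one-dimensional Riemann--Roch of \cref{example-permissible-point} gives $\chi(D\setminus(D'\cap D))=\opn{deg}_{D}(D'|_{D})+\chi(D)$, with $\chi(D)=-\tfrac{1}{2}\opn{deg}((D+K_X).D)$ from the adjunction formula. Expanding $(D'-D).(D'-D-K_X)$ and collecting intersection numbers --- here invoking \cref{proposition-cycle-chern} to identify $\opn{deg}_{D}(D'|_{D})$ with $\opn{deg}(D'.D)$ --- the two sides agree term by term.

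To finish, I would match the topology: the long exact sequence of the pair $(X\setminus D',\,D\setminus D')$ yields
\[
\chi(H^{\bullet}(X\setminus D',D\setminus D';\mathbb{R}))=\chi(X\setminus D')-\chi(D\setminus D'),
\]
and $D\setminus D'=D\setminus(D'\cap D)$, so this coincides with the expression obtained above; the first equality of \eqref{equation-rr-bertini} then follows from \cref{theorem-euler-power} applied to $(X,D')$ and to $(D,D'|_{D})$.

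The main obstacle is the innocuous-looking identity $\opn{deg}_{D}(D'|_{D})=\opn{deg}(D'.D)$, i.e.\ the compatibility of the degree of the restricted divisor on $D$ with the tropical intersection number computed in $X$; this is exactly where hypotheses (2)--(3) and \cref{proposition-cycle-chern} are indispensable, since without a genuine submanifold structure on $D'|_{D}$ the left-hand side is not even defined, and moderate position is what keeps the restriction transverse enough for the two counts to coincide. Beyond dimension two the same strategy would require, in place of the explicit computation, a tropical Grothendieck--Riemann--Roch / projection formula for the closed embedding $\iota\colon D\to X$ (the analog of $\opn{ch}(\iota_{*}\mathcal{F})\opn{td}(X)=\iota_{*}(\opn{ch}(\mathcal{F})\opn{td}(D))$), together with the still-open higher-dimensional \cref{conjecture-rr-euler}; securing that projection formula is the real difficulty, and is why the argument is complete only in the surface case.
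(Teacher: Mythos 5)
Your proposal follows essentially the same route as the paper. The conditional reduction you describe --- additivity $\opn{RR}(X;D'-D)=\opn{RR}(X;D')-\opn{RR}(D;D'|_{D})$, i.e.\ \eqref{equation-rr-number-divisor}, obtained from the Gysin/projection formula granted \cref{conjecture-grr-divisor}, fed with two applications of \cref{conjecture-rr-euler} --- is exactly \cref{proposition-euler-to-bertini}; and your explicit surface computation is exactly \cref{theorem-rr-bertini-surface}, where the paper likewise uses that \cref{conjecture-grr-divisor} is a theorem in dimension $2$ by \cite[Theorem 5.2]{demedrano2023chern} and then combines \cref{theorem-rr-euler-surface}, the curve-level computation of \cref{example-permissible-point}, and \cref{proposition-cycle-chern}.

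Two citations in your write-up should be repaired. First, you derive the first equality of \eqref{equation-rr-bertini} from \cref{theorem-euler-power}, but that theorem assumes $D'$ is \emph{relatively uniform} (\cref{definition-relatively-uniform}), whereas the conjecture only grants moderate position, and the paper explicitly states it is unknown whether the two notions coincide. This is harmless in the surface case, because the proof of \cref{theorem-rr-euler-surface} establishes \eqref{equation-iterated-euler}, namely $\chi(S\setminus C)=\chi(S)+\chi(C)+\chi(|C^{2}|)=\sum_{k}\chi(|C^{k}|)$, under moderate position alone; so \cref{corollary-ds-euler-rr} already contains the sum formula you need and should be cited instead. Second, in the degenerate cases the conjecture permits (e.g.\ $D'=\varnothing$), \cref{conjecture-rr-euler} cannot be invoked and your reduction additionally needs $\opn{RR}(X;0)=\chi(X)$; this is why \cref{proposition-euler-to-bertini} lists \cite[Conjecture 6.13]{demedrano2023chern} as a separate hypothesis, a point your reduction silently omits.
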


\begin{definition}
A pair $(D,D')$ of tropical submanifolds of codimension 
$1$ on a compact tropical manifold $X$ is 
\emph{in moderate position} if $(D,D')$ satisfies
the condition (1)-(3) in \cref{conjecture-rr-bertini}.
\end{definition}
In this paper, we won't discuss how many pairs of 
tropical submanifolds of codimension $1$ in moderate position
exist,
but we expect there exists a sufficient number of them.
Instead, we will see that we can deduce
\cref{conjecture-rr-bertini} from \cref{conjecture-rr-euler}
and a conjecture about the Todd class of tropical 
manifolds (\cref{conjecture-grr-divisor})
in \cref{proposition-euler-to-bertini}.
From this observation, we will
generalize \cref{theorem-rr-euler-surface}
to \cref{theorem-rr-bertini-surface}.
\begin{remark}
If both $D$ and $D'$ is empty, then
\cref{conjecture-rr-bertini} is equivalent to
\cite[Conjecture 6.13]{demedrano2023chern}.
\cref{conjecture-rr-bertini} is equivalent to
\cref{conjecture-rr-c-euler} when 
$D'$ is empty and $D$ is not.
\cref{conjecture-rr-euler} is 
equivalent to
\cref{conjecture-rr-c-euler} when 
$D$ is empty and $D'$ is not.
Therefore, we can consider 
\cref{conjecture-rr-bertini} as a generalization
of the three conjecture above.
Moreover, it is important that 
the RHS of \eqref{equation-rr-bertini} 
can be considered as a certain homological data.
This data is also related with the study in 
\cite{tsutsui2023graded}, so we expect 
\cref{conjecture-rr-bertini} is highly related
with homological mirror symmetry.
\end{remark}

\subsection{Outline of this paper}
In \cref{section-on-rr-euler}, we mainly
discuss \cref{conjecture-rr-euler} and
give a proof of \cref{theorem-rr-euler-surface}.
In \cref{section-on-rr-bertini}, we mainly
discuss \cref{conjecture-rr-bertini,proposition-euler-to-bertini}.
We also give a generalization of
\cref{theorem-rr-euler-surface} in \cref{theorem-rr-bertini-surface}.

\subsection*{Acknowledgement}
We would like to thank Kazushi Ueda for his continuous advice
and encouragement. We are also thankful to Kris Shaw
for answering my questions on
\cite{shaw2015tropical,demedrano2023chern}.
This work was partially supported 
by JSPS KAKENHI Grant Numbers JP21J14529 and JP21K18575.

\section{Tropical submanifolds in moderate position}
\label{section-on-rr-euler}
\subsection{Tropical manifolds}
In this subsection, we recall
the theory of tropical manifolds from
\cite{shaw2011tropical,MR3330789,mikhalkin2018tropical,MR4637248,demedrano2023chern}.
We also recall it from other references if necessary.
We mainly follow the sheaf theoretic approach of
rational polyhedral spaces in \cite{MR4637248}.
For simplicity, we adopt the definition of
tropical manifold in
\cite[Definition 2.3]{demedrano2023chern}.
Every tropical manifold in the sense of
\cite{demedrano2023chern} induce a structure
of a rational polyhedral space naturally,
and it is a tropical manifold in the sense of
the preprint version
\cite[Definition 6.1]{gross2019sheaftheoretic}
of \cite{MR4637248}.
To distinguish between the subtle differences
in the definitions of tropical manifolds
according to different papers,
similar to \cite[\textsection 6]{MR4637248},
we will refer to tropical manifolds in the sense of
\cite[Definition 6.1]{gross2019sheaftheoretic}
as \emph{locally matroidal} rational polyhedral spaces.

\begin{notation}
Throughout this paper, $\mathbb{T}\deq 
\mathbb{R}\cup\{-\infty\}$.
Let $(X,\mathcal{O}_X^{\times})$ be a rational
polyhedral space \cite[Definition 2.2]{MR4637248}.
The \emph{dimension} $\dim X$ of $X$ is 
the homological dimension of locally compact
Hausdorff spaces (e.g.
\cite[Chapter III. Definition 9.4]{MR842190}).
The \emph{local dimension} of $X$ at $x\in X$
will be denoted by $\dim_x X$
\cite[Chapter III. Definition 9.10]{MR842190}.
A rational polyhedral space $(X,\mathcal{O}_X^{\times})$
is \emph{pure dimensional} if $\dim X$ is finite and
$\dim X=\dim_x X$ for any $x\in X$.
These definitions are compatible with
\cite[Definition 7.1.1]{mikhalkin2018tropical}.
We write $X_{\mathrm{reg}}$ for the set of points
in $X$ such that they have an open neighborhood 
which is isomorphic to an
integral affine manifold
\cite[Definition 2.7]{MR4637248}
(see \cite[\textsection 4.1]{MR3894860}).
See also \cite[Definition 3]{MR2181810}
for the sheaf theoretical
definition of integral affine manifolds.
We write $X_{\mathrm{sing}}\deq 
X\setminus X_{\mathrm{reg}}$.

For a given loopless matroid $M$, let
$L_M$ be the \emph{tropical linear space} of
$M$ in the sense of \cite[\textsection 2.2]{MR4637248}.
When we consider $L_M$ as a tropical cycle on a vector
space, we say $L_M$ the matroidal tropical cycle associated
to $M$ like \cite{demedrano2023chern}.
We write $U_{r,n}$ for the uniform of matroid of rank $r$ 
over $[n]\deq \{1,\ldots,n\}$.
\end{notation}

In this paper, a rational polyhedral subspace is meant
in the following sense
(cf. \cite[Definition 2.14]{demedrano2023chern}).

\begin{definition}
Let 
$(X,\mathcal{O}_X^{\times})$ be a rational polyhedral space
and $Y$ a subspace of $X$.
A rational polyhedral space $(Y,\mathcal{O}_Y^{\times})$ is 
a \emph{rational polyhedral subspace} of
$(X,\mathcal{O}_X^{\times})$ if 
for any $x\in Y$, there exists a chart
$\psi \colon U \to V (\subset \mathbb{T}^{n})$
of $X$ \cite[Definition 2.2]{MR4637248}
such that $x\in U$ and the restriction
$\psi|_{U\cap Y}\colon U\cap Y\to 
\psi(U\cap Y)$ is also a chart of $Y$.
\end{definition}
We note that there exists rational polyhedral spaces
$(X,\mathcal{O}_X^{\times})$
and $(Y,\mathcal{O}_Y^{\times})$
such that $X=Y$ and the identity map of $X$ 
is a morphism from $(Y,\mathcal{O}_Y^{\times})$ to
$(X,\mathcal{O}_X^{\times})$ but not an isomorphism.
(We can construct such an example from a tropical analog of
Frobenius morphism.)
Every locally polyhedral set of a rational polyhedral space
\cite[Definition 2.4 (d)]{MR4637248} has a natural 
structure of a rational polyhedral subspace.

Let $(X,\mathcal{O}_X^{\times})$ be a rational polyhedral space
and $\opn{LC}_x X$ the local cone of $X$ at $x$ ($\in X$)
\cite[\textsection 2.2]{MR4637248}.
Follwoing \cite[Definition 7.1.8]{mikhalkin2018tropical} and 
\cite[Definition 2.3]{demedrano2023chern},
an \emph{atlas} of $X$ means a family 
$\{(U_i,\psi_i)\}_{i\in I}$ of charts 
$\psi_i\colon U_i\to V_i (\subset \mathbb{T}^{n_i})$
such that $\bigcup_{i\in I}U_i=X$.

Let $(X,\mathcal{O}_X^{\times})$ be
a rational polyhedral space which is regular at
infinity \cite[\textsection 6.1]{MR4637248}
(see also \cite[Definition 1.2]{MR3330789}
and \cite[Definition 7.2.4 and Corollary 7.2.11]{mikhalkin2018tropical}).
Then, every point $x$ in $X$ has an open neighborhood $U_x$
which is isomorphic to an open subset of 
$\opn{LC}_x X\times \mathbb{T}^{m_x}$ for some
$m_x\in \mathbb{Z}_{\geq 0}$.
Therefore, every rational polyhedral space which
is regular at infinity is paracompact and
locally contractible,
so the singular cohomology
of it is isomorphic to the sheaf cohomology
of the constant sheaf on it, and thus we identify
the two cohomologies.
Moreover,
every nonempty
rational polyhedral space which is regular
at infinity has the sedentarity function
$\opn{sed}_X\colon X\to \mathbb{Z}$ on $X$
\cite[Definition 7.2.6]{mikhalkin2018tropical}
(see also \cite[Definition 2.4]{demedrano2023chern}).

From definition, for any $x\in X$
\begin{align}
\opn{sed}_X(x)+\dim \opn{LC}_x X=\dim_x X.
\end{align}
The sedentarity function
$\opn{sed}_X$ is upper semiconstant
\cite[Definition 7.1.11]{mikhalkin2018tropical} so 
$\opn{sed}_X$ is upper semicontinuous.
In particular, the following subsets are
locally polyhedral
\cite[Proposition 7.1.12]{mikhalkin2018tropical}: 
\begin{align}
X^{[\geq k]}\deq \{p\in X\mid \opn{sed}_X(p)\geq k\},
\quad 
X_{\infty}\deq X^{[\geq 1]}.
\end{align}
The polyhedral subspace $X_{\infty}$ is called
the \emph{boundary} of $X$ (e.g. \cite{demedrano2023chern}),
or the \emph{divisor at infinity} of $X$
(e.g. \cite[Definition 7.2.9]{mikhalkin2018tropical}).
If $X$ is a
tropical toric variety, then $X_{\infty}$ is a
direct analog of toric boundary.
A rational polyhedral space $(X,\mathcal{O}_X^{\times})$
which is regular at infinity is \emph{locally matroidal}
if $\opn{LC}_x X\simeq L_M$ for some loopless matroid $M$
\cite[\textsection 6]{MR4637248}. When a data
$(X,\{\psi_\alpha \colon U_{\alpha} \to 
\Omega_{\alpha}\subset X_{\alpha}\}_{\alpha \in \mathcal{I}})$
is a tropical manifold (in the sense of 
\cite[Definition 2.3]{demedrano2023chern}),
then the associated rational polyhedral space
is locally matroidal.

Let $(X,\mathcal{O}_X^{\times})$ be a rational
polyhedral space and $(Y,\mathcal{O}_Y^{\times})$
a rational polyhedral subspace.
For every $x\in Y$, 
the inclusion map $\iota\colon Y\to X$ induces
an injection 
$\iota_{*,x}\colon \opn{LC}_x Y\to \opn{LC}_x X$
from the local cone $\opn{LC}_x Y$ of $Y$ at $x$ to 
that of $X$ at $x$.
As long as it does not lead to confusion, we will identify
$\opn{LC}_x Y$ with $\iota_{*,x}(\opn{LC}_x Y)$, and
consider $T_x Y$ as a subspace of $T_{x}X$.
We set 
\begin{align}
\opn{codim}(Y/X)\deq \dim X -\dim Y,\quad 
\opn{codim}_x(Y/X)\deq \dim_x X -\dim_x Y.
\end{align}

The rational polyhedral subspace $Y$ of $X$ is
a rational polyhedral space of
\emph{codimension $d$} if $Y$ and $X$ are pure dimensional
and $\opn{codim}(Y/X)=d$.

Let $(X,\mathcal{O}_X^{\times})$ and 
$(Y,\mathcal{O}_Y^{\times})$ be rational
polyhedral spaces which is regular at infinity and
assume that $(Y,\mathcal{O}_Y^{\times})$
is a rational polyhedral subspace of
$(X,\mathcal{O}_X^{\times})$. 
Then, the following equations and inequalities hold
for all $x\in Y$:
\begin{align}
\opn{sed}_X(x)-\opn{sed}_Y(x)=
\opn{codim}_x(Y/X)-\opn{codim}_0(\opn{LC}_x Y/\opn{LC}_xX),
\end{align}
\begin{align}
\opn{codim}_x(Y/X) \geq 
\opn{sed}_X(x)-\opn{sed}_Y(x)\geq 0.
\end{align}
In particular, if $\opn{codim}_x(Y/X)=1$,
then $\opn{sed}_X(x)-\opn{sed}_Y(x)=0$ or $1$.
Following \cite[\textsection 2.5]{demedrano2023chern},
the rational polyhedral subspace $Y$ of $X$ is
\emph{sedentarity-$0$} if
$\opn{sed}_X(x)=\opn{sed}_Y(x)$ for all
$x\in Y$.

An injective morphism $f\colon Y\to X$ of rational polyhedral spaces
is \emph{locally matroidal} if both $X$ and $Y$ are locally matroidal
and for any inclusion $\opn{LC}_x Y\subset \opn{LC}_x X$ of
the local cones at $x$ comes from the inclusion
$L_M\subset L_N$ induced from some two matroids $M,N$ with
the common ground set
(see also \cite[\textsection 3]{MR3041763} 
or \cite[\textsection 2.4]{MR3032930}).  
If $X$ is a tropical manifold and $Y$ is
a tropical submanifold of $X$
\cite[Definition 2.14]{demedrano2023chern},
then the inclusion map $Y\hookto X$ is locally matroidal. 
We note that a codimension $1$ tropical submanifold of a given
tropical manifold $X$ is essentially same with
a locally degree $1$ divisor on $X$
\cite[Definition 4.3]{shaw2015tropical}.
(We thank Kris Shaw for answering our question about this).
As stressed in \cite[Example 2.15]{demedrano2023chern},
there exist examples such that $Y$ is a tropical manifold and
a rational polyhedral subspace of another tropical manifold
$X$, but $Y$ is not a tropical submanifold of $X$.
We can see such examples
in \cite{MR2594592,MR3339531,shaw2015tropical}.
The support of every tropical cycle in a given
rational polyhedral space is
a closed subset of it,
and thus every tropical submanifold of a given
tropical manifold $X$ is a closed subset of $X$.

\subsection{Moderate position}
Let $(X,\mathcal{O}_X^{\times})$ be
a rational polyhedral space.
For $x\in X$, let 
$\opn{lineal}(X,x)$ be the (maximal) lineality space
$\opn{lineal}(\opn{LC}_x X)$ of 
$\opn{LC}_x X (\subset T_x X)$
\cite[\textsection 2.1]{MR4246795}
(see also \cite[\textsection 3]{demedrano2023chern}).

\begin{remark}
The lineality space of the local cone $\opn{LC}_x X$
at a point $x$ in a rational polyhedral space
$(X,\mathcal{O}_X^{\times})$ in the sense of 
\cite[\textsection 2.1]{MR4246795}
is equivalent to
the maximal lineality space of $\opn{LC}_x X$ in
\cite[\textsection 5]{MR3041763}.
We can check about this as follows:
Since every conical rational polyhedral set $P$ has an isomorphism
$P\simeq P/\opn{lineal}(P)\times \opn{lineal}(P)$, we
may assume $\opn{lineal}(P)$ is trivial.
Therefore, to see that the two coincide,
it is sufficient to observe that 
conical rational polyhedral sets
always possess a fan structure.
We can give a proof of it like
that of the existence theorem
of a triangulation of compact convex polyhedron
in $\mathbb{R}^{n}$ (cf. \cite[Theorem 2.11]{MR665919}).
From definition, every conical rational polyhedral set $P$
is a finite union $\bigcup_{i\in I}\sigma_i$ of 
rational polyhedral cones $\sigma_i$.
Besides, we may assume every $\sigma_i$ is strongly convex.
For every $i\in I$, there exists a
complete fan $\Sigma_i$ on $\mathbb{R}^n$
such that $\sigma_i\in\Sigma_i$.
(We can prove it by an application of Sumihiro's
compactification theorem for toric varieties
\cite[Theorem 3]{MR337963}.)
The refinement $\bigwedge_{i\in I}\Sigma_i$
of a family $\{\Sigma_i\}_{i\in I}$ of fans gives
a fan structure of $P$.
\end{remark}

\begin{definition}
\label{definition-permissible-position}
Let $(X,\mathcal{O}_X^{\times})$
be a rational polyhedral space  and 
$(Y,\mathcal{O}_Y^{\times})$ a
rational polyhedral subspace of $X$.
The rational polyhedral subspace
$Y$ is in \emph{moderate position} on $X$ if
for any $x\in Y$
\begin{align}
     \opn{lineal}(Y,x) \subsetneq
 \opn{lineal}(X,x).
\end{align}
\end{definition}
We remark that
the complement $X\setminus Y$ of tropical submanifold
$Y$ in moderate position on $X$
does not usually satisfy the condition of finite type
\cite[Definition 7.1.14 (c)]{mikhalkin2018tropical}. 
\begin{example}
We retain the notation of 
\cref{definition-permissible-position}.
\begin{enumerate}
\item If $Y$ is in moderate position on $X$, then
$Y\cap\{x\in X\mid \opn{lineal}(X,x)=\{0\}\}
=\varnothing$.
\item The rational polyhedral subspace $Y\cap X_{\mathrm{reg}}$ of $X_{\mathrm{reg}}$
is in 
moderate position on $X_{\mathrm{reg}}$.
In particular, $Y$ is always in moderate position
when $X$ is an integral affine manifold.
\end{enumerate}
\end{example}

If $Y$ is in moderate position, then
the inclusion 
$\opn{LC}_x Y\to \opn{LC}_x X$ induces the following 
exact sequences:
\[\begin{tikzcd}
	0 & {\opn{lineal}(Y,x)} & {T_x Y} & {T_x Y/\opn{lineal}(Y,x)} & 0 \\
	0 & {\opn{lineal}(X,x)} & {T_x X} & 
{T_x X/\opn{lineal}(X,x)} & 0
	\arrow[from=1-1, to=1-2]
	\arrow[from=2-1, to=2-2]
	\arrow[from=2-3, to=2-4]
	\arrow[from=1-3, to=1-4]
	\arrow[from=1-4, to=2-4]
	\arrow[from=1-2, to=2-2]
	\arrow[from=1-2, to=1-3]
	\arrow[from=2-2, to=2-3]
	\arrow[from=1-3, to=2-3]
	\arrow[from=2-4, to=2-5]
	\arrow[from=1-4, to=1-5]
\end{tikzcd}.\]

\begin{proposition}
Let $X$ be a tropical manifold and 
$Y$ a tropical submanifold of codimension $1$
in moderate position on $X$.
Then, $Y$ is a sedentarity-0 submanifold of $X$.
\end{proposition}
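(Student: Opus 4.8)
The plan is to fix a point $x\in Y$ and prove $\opn{sed}_X(x)=\opn{sed}_Y(x)$; since a rational polyhedral subspace is sedentarity-$0$ exactly when this equality holds at every point of $Y$, this suffices. Because $Y$ is of codimension $1$, both $X$ and $Y$ are pure dimensional, so $\opn{codim}_x(Y/X)=\dim X-\dim Y=1$ for every $x\in Y$. I would then invoke the identity and the inequalities recalled earlier in this section,
\begin{align*}
\opn{sed}_X(x)-\opn{sed}_Y(x)&=\opn{codim}_x(Y/X)-\opn{codim}_0(\opn{LC}_x Y/\opn{LC}_x X),\\
\opn{codim}_x(Y/X)&\geq \opn{sed}_X(x)-\opn{sed}_Y(x)\geq 0.
\end{align*}
With $\opn{codim}_x(Y/X)=1$ these show that $\opn{sed}_X(x)-\opn{sed}_Y(x)\in\{0,1\}$ and that it vanishes precisely when $\opn{codim}_0(\opn{LC}_x Y/\opn{LC}_x X)=1$. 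Hence everything reduces to excluding the case $\opn{codim}_0(\opn{LC}_x Y/\opn{LC}_x X)=0$, i.e.\ $\dim\opn{LC}_x Y=\dim\opn{LC}_x X$.

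Next I would use that the inclusion $Y\hookrightarrow X$ of a tropical submanifold is locally matroidal, so that the inclusion of local cones $\opn{LC}_x Y\subseteq \opn{LC}_x X$ is induced by an inclusion $L_M\subseteq L_N$ of tropical linear spaces for matroids $M,N$ on a common ground set $E$. Since the dimension of a Bergman fan equals the rank of its matroid, $\dim L_M=\opn{rk}(M)$ and
\begin{align*}
\opn{codim}_0(\opn{LC}_x Y/\opn{LC}_x X)=\dim L_N-\dim L_M=\opn{rk}(N)-\opn{rk}(M).
\end{align*}
Thus it is enough to show $\opn{rk}(N)>\opn{rk}(M)$.

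The key step, which I expect to be the main obstacle, is a matroid-theoretic input: for matroids on a common ground set, $L_M\subseteq L_N$ is equivalent to $M$ being a matroid quotient of $N$, and a \emph{proper} quotient must strictly decrease the rank. Explicitly, the quotient relation yields $\opn{rk}_N(A)-\opn{rk}_M(A)\leq \opn{rk}(N)-\opn{rk}(M)$ for all $A\subseteq E$ (taking the top set to be $E$ in the rank inequality for quotients), while the same inequality with bottom set $\varnothing$ gives $\opn{rk}_N(A)\geq \opn{rk}_M(A)$. If $\opn{rk}(N)=\opn{rk}(M)$ these combine to $\opn{rk}_N=\opn{rk}_M$, forcing $M=N$. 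Consequently $M\neq N$ already implies $\opn{rk}(N)>\opn{rk}(M)$. I would stress that matroidality is essential here: for arbitrary polyhedral cones a full-dimensional subcone with strictly smaller lineality space exists (for instance a closed half-space inside $\R^{2}$), and it is exactly the balancing condition satisfied by Bergman fans that rules such configurations out.

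Finally I would feed in the hypothesis of moderate position. By \cref{definition-permissible-position} we have $\opn{lineal}(Y,x)\subsetneq \opn{lineal}(X,x)$, so in particular $\opn{LC}_x Y\neq \opn{LC}_x X$ and therefore $L_M\neq L_N$, i.e.\ $M\neq N$. By the previous step $\opn{rk}(N)-\opn{rk}(M)\geq 1$, hence $\opn{codim}_0(\opn{LC}_x Y/\opn{LC}_x X)\geq 1$; combined with the displayed identity and the inequality $\opn{sed}_X(x)-\opn{sed}_Y(x)\geq 0$ this forces $\opn{codim}_0(\opn{LC}_x Y/\opn{LC}_x X)=1$ and $\opn{sed}_X(x)=\opn{sed}_Y(x)$. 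As $x\in Y$ was arbitrary, $Y$ is sedentarity-$0$.
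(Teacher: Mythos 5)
Your proof is correct, and it shares the paper's overall skeleton: both arguments combine the identity $\opn{sed}_X(x)-\opn{sed}_Y(x)=\opn{codim}_x(Y/X)-\opn{codim}_0(\opn{LC}_x Y/\opn{LC}_x X)$ with $\opn{codim}_x(Y/X)=1$ to reduce the statement to excluding $\dim\opn{LC}_x Y=\dim\opn{LC}_x X$, and both observe that moderate position forbids $\opn{LC}_x Y=\opn{LC}_x X$ since equal cones have equal lineality spaces. The difference lies in how the equal-dimension case is excluded. The paper cites \cite[Lemma 2.4]{MR3041763}, which says precisely that a containment $L_M\subseteq L_N$ of matroidal fans of the same dimension is an equality. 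You re-derive this from the matroid-quotient characterization of Bergman fan containment together with the elementary fact that a quotient of equal rank must be the identity; your rank-function argument is correct, being the standard specialization of the quotient inequality $\opn{rk}_N(B)-\opn{rk}_N(A)\geq\opn{rk}_M(B)-\opn{rk}_M(A)$ for $A\subseteq B$ to the cases $B=E$ and $A=\varnothing$. What your route buys is transparency about \emph{why} matroidality matters --- your half-plane example shows the statement genuinely fails for unbalanced cones --- but note that the equivalence ``$L_M\subseteq L_N$ if and only if $M$ is a quotient of $N$'', which you flag as the main input, is itself a nontrivial theorem, so you have exchanged the paper's black box for another known result rather than eliminated it. Fortunately you only need the implication containment $\Rightarrow$ quotient, and that direction admits a short self-contained proof: up to sign convention, the indicator vector of a subset $F$ of the common ground set lies in $L_M$ exactly when $F$ is a flat of $M$ (if some circuit $C$ satisfied $\sharp(C\setminus F)=1$, say $C\setminus F=\{e\}$, then $e\in\opn{cl}(C\setminus\{e\})\subseteq\opn{cl}(F)=F$, a contradiction), so $L_M\subseteq L_N$ forces every flat of $M$ to be a flat of $N$, which is one of the standard characterizations of a matroid quotient. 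With that supplement your argument is complete and fully parallel in strength to the paper's.
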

\begin{proof}
If $Y$ is in moderate position,
then $\opn{LC}_x Y\not \simeq \opn{LC}_x X$ for all 
$x \in Y$.
On the other hand, if $x \in Y$ satisfies 
$\opn{sed}_X(x)-\opn{sed}_Y(x)=1$, then
$\dim \opn{LC}_x Y=\dim \opn{LC}_x X$,
and thus $\opn{LC}_x Y\simeq \opn{LC}_x X$
\cite[Lemma 2.4]{MR3041763}.
Therefore, $\opn{sed}_X(x)-\opn{sed}_Y(x)=0$ when
$Y$ is in moderate position. 
\end{proof}

\begin{proposition}
\label{proposition-divisor-poincare}
Let $X$ be a purely 
$n$-dimensional compact integral
affine manifold and $D$ a tropical submanifold 
of codimension $1$.
Then, \cref{conjecture-rr-c-euler} is 
equivalent to
\cref{conjecture-rr-euler}. In particular,
\cref{conjecture-rr-euler} is true for
all compact integral affine manifolds when
\cite[Conjecture 6.13]{demedrano2023chern} is
true for any compact tropical manifold.
\end{proposition}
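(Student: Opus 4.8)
The plan is to show that, for integral affine $X$, both conjectures reduce to the single equality $\opn{RR}(X;D)=\chi(H^{\bullet}(X\setminus D;\mathbb{R}))$, so that they are equivalent, and then to deduce the ``in particular'' clause by feeding \cite[Conjecture 6.13]{demedrano2023chern} for $X$ and for $D$ into the tropical analogue of \eqref{equation-anti-effective-divisor}. Two features of integral affine manifolds drive everything. First, at every $x\in X$ one has $\opn{lineal}(X,x)=T_xX$ of dimension $n$, whereas $\dim\opn{lineal}(D,x)\leq\dim D=n-1$; hence $\opn{lineal}(D,x)\subsetneq\opn{lineal}(X,x)$ and $D$ is automatically in moderate position (\cref{definition-permissible-position}), so \cref{conjecture-rr-euler} genuinely applies. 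Second, the tropical tangent bundle of $X$ is flat, so $\opn{td}(X)=1$ and $\opn{RR}(X;\pm D)=\int_X\opn{ch}(\mathcal{L}(\pm D))$. Since $\int_X$ vanishes outside the top cohomological degree and $c_1(\mathcal{L}(-D))=-c_1(\mathcal{L}(D))$, only the degree-$n$ term of the Chern character contributes, giving $\opn{RR}(X;D)=\tfrac{1}{n!}\opn{deg}(D^n)$, where $D^n$ is the $n$-fold self-intersection, and $\opn{RR}(X;-D)=(-1)^n\opn{RR}(X;D)$.

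On the topological side, $X$ is a closed (compact and without boundary) $n$-manifold and $D$ is closed, so $X\setminus D$ is an open $n$-manifold. Poincar\'e duality with $\mathbb{R}$-coefficients, twisted by the orientation local system, together with the fact that a rank-one local system does not change Euler characteristics, yields $\chi(H^{\bullet}_c(X\setminus D;\mathbb{R}))=(-1)^n\chi(H^{\bullet}(X\setminus D;\mathbb{R}))$, while the long exact sequence of the pair $(X,D)$ gives $\chi(H^{\bullet}_c(X\setminus D;\mathbb{R}))=\chi(X)-\chi(D)$. Reading \cref{conjecture-rr-c-euler} as $\opn{RR}(X;-D)=\chi(H^{\bullet}_c(X\setminus D;\mathbb{R}))$ and substituting the two sign relations turns it into $(-1)^n\opn{RR}(X;D)=(-1)^n\chi(H^{\bullet}(X\setminus D;\mathbb{R}))$, i.e.\ into \cref{conjecture-rr-euler}; the reverse implication is identical. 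This proves the equivalence, the combinatorial middle terms being matched through the first equalities of the two conjectures.

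For the ``in particular'' clause I assume \cite[Conjecture 6.13]{demedrano2023chern} for every compact tropical manifold and prove \cref{conjecture-rr-c-euler}, from which \cref{conjecture-rr-euler} follows by the equivalence just established. Applied to $X$ the hypothesis gives $\chi(X)=\opn{RR}(X;0)=\int_X\opn{td}(X)=0$, and applied to the compact tropical manifold $D$ it gives $\chi(D)=\opn{RR}(D;0)=\int_D\opn{td}(D)$. It then suffices to establish the tropical counterpart $\opn{RR}(X;-D)=\opn{RR}(X;0)-\opn{RR}(D;0)$ of \eqref{equation-anti-effective-divisor}. Here I use that $D$ is a sedentarity-$0$ submanifold, so the normal bundle is $N_{D/X}=\mathcal{L}(D)|_D$ and the tangent--normal sequence $0\to T_D\to T_X|_D\to N_{D/X}\to 0$ holds; flatness of $T_X|_D$ forces $\opn{td}(T_D)=\opn{td}(N_{D/X})^{-1}$, whose degree-$(n-1)$ part integrates, via the self-intersection identity $\int_D c_1(N_{D/X})^{n-1}=\opn{deg}(D^n)$ (valid by the compatibility in \cref{proposition-cycle-chern}), to $\opn{RR}(D;0)=\tfrac{(-1)^{n-1}}{n!}\opn{deg}(D^n)$. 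Since $\opn{RR}(X;-D)=\tfrac{(-1)^n}{n!}\opn{deg}(D^n)=-\opn{RR}(D;0)$ and $\opn{RR}(X;0)=0$, this is exactly $\opn{RR}(X;-D)=\chi(X)-\chi(D)$, establishing \cref{conjecture-rr-c-euler}.

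The main obstacle is the last step: upgrading the adjunction formula \cite[Theorem 5.2]{demedrano2023chern} from its canonical-class ($c_1$) form to the full multiplicativity $\opn{td}(T_D)=\opn{td}(N_{D/X})^{-1}$ of the Todd class along the tangent--normal sequence within the tropical Chern-class framework of \cite{demedrano2023chern}. For $\dim X=2$ this is already contained in the Noether-type computation behind \cref{theorem-rr-euler-surface}, but in higher dimensions it is precisely the place where the flatness of $T_X$ must be shown to do the work that the general Todd-class conjecture \cref{conjecture-grr-divisor} would otherwise supply; verifying this multiplicativity carefully is the crux of the argument.
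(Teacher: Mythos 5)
Your proof of the equivalence is correct and is precisely the paper's argument: moderate position is automatic for integral affine $X$; $\opn{td}(X)=1$ yields $\opn{RR}(X;-D)=(-1)^{n}\opn{RR}(X;D)$, which is the paper's \eqref{equation-trivial-serre}; Poincar\'e duality on the open manifold $X\setminus D$ yields $\chi_c(X\setminus D)=(-1)^{n}\chi(X\setminus D)$, which is \eqref{equation-pd}; and the equivalence follows by multiplying both sides of \cref{conjecture-rr-c-euler} by $(-1)^{n}$. Your extra care (twisted Poincar\'e duality to cover non-orientable $X$, and $\chi_c(X\setminus D)=\chi(X)-\chi(D)$ from the pair sequence) is sound but does not change the route.

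The divergence is in the ``in particular'' clause, and there your own caveat identifies a genuine gap. To pass from \cite[Conjecture 6.13]{demedrano2023chern} to \cref{conjecture-rr-c-euler} you need $\opn{RR}(X;-D)=\opn{RR}(X;0)-\opn{RR}(D;0)$, equivalently (since $\opn{td}(X)=1$) the multiplicativity $\opn{td}(D)=\iota^{*}\bigl((1-\opn{exp}(-\PD{D}))/\PD{D}\bigr)$ along the tangent--normal sequence; you compute everything correctly modulo this step, then leave it unverified. That step is exactly the integral-affine instance of \cref{conjecture-grr-divisor}: it holds for $n\le 2$ (via \cite[Theorem 5.2]{demedrano2023chern}, as in \cref{theorem-rr-euler-surface}), but for $n\ge 3$ it is open, because the known adjunction formula controls only the degree-one part $c_1^{\mathrm{sm}}$ while the Todd class requires all degrees; flatness of $T_X$ alone does not determine $c^{\mathrm{sm}}(D)$. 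So, as a complete proof of the stated proposition, your proposal is incomplete at this point. You should know, however, that the paper's own proof does not supply this step either: it deduces the entire proposition from \eqref{equation-trivial-serre} and \eqref{equation-pd}, which only give the equivalence, whereas the identity \eqref{equation-rr-number-divisor} needed to convert Conjecture 6.13 into \cref{conjecture-rr-c-euler} is derived in \cref{example-sum-formula} (and invoked in \cref{proposition-euler-to-bertini}) only under the hypothesis of \cref{conjecture-grr-divisor}. In short, your diagnosis of the crux is accurate and your treatment is more explicit than the source; the ``in particular'' clause is unconditional only in dimension $\le 2$, and otherwise needs \cref{conjecture-grr-divisor} (or a direct proof of Todd multiplicativity for flat ambient spaces) as an additional hypothesis. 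A minor point: the identity $\int_D(\iota^{*}\PD{D})^{n-1}=\opn{deg}(D^{n})$ is the projection formula of \cref{equation-gysin-projection-formula}, not a consequence of \cref{proposition-cycle-chern}.
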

\begin{proof}
Since $X$ is an integral affine manifold,
$D$ is always in moderate position in $X$.
Since $\opn{td}(X)=1$ (see \cref{definition-tropical-todd}),
we have
\begin{align}
\label{equation-trivial-serre}
\opn{RR}(X;-D)=(-1)^{\dim X}\opn{RR}(X;D).
\end{align}
Besides, $X\setminus D$ is a topological manifold, 
and thus we have the following equation from 
the Poincar\'e duality:
\begin{align}
\label{equation-pd}
\chi_c(X\setminus D)=(-1)^{\dim X}\chi(X\setminus D)
\end{align}
where $\chi_c(X\setminus D)\deq 
\chi(H^{\bullet}_c(X\setminus D;\mathbb{R}))$.
Therefore, \cref{proposition-divisor-poincare} follows from
\cref{equation-trivial-serre} and
\cref{equation-pd}.
\end{proof}

\begin{remark}[{Integral affine manifolds with singularities}]
\label{remark-iass}
We expect that we can generalize
\cref{conjecture-rr-euler} and 
\cref{proposition-divisor-poincare}
for integral affine manifold with singularities
(e.g. see \cite{MR2213573,MR2181810,MR4347312}).

Normally, integral affine manifolds with singularities
are analogs of complex manifolds whose canonical bundles
are numerically trivial. If $M$ is a compact and
connected complex manifold whose canonical bundle $K_M$ is
numerically trivial, then $c_1(K_M)$ is a torsion,
i.e., $l\cdot c_1(K_M)=0$ for 
some $l\in \mathbb{Z}_{>0}$.
From the Atiyah--Singer index formula and the Serre duality, 
for any divisor $D$ on $M$, we have
\begin{align}
\label{equation-calabi-yau-euler}
\chi(M;\mathcal{O}_M(-D))=(-1)^{\dim M}
\chi(M;\mathcal{O}_{M}(D)).
\end{align}
The equation \cref{equation-calabi-yau-euler} is
similar with
\cref{proposition-divisor-poincare}, and 
one of essential points of the proof of it
is that $X\setminus D$ is a topological manifold.
Therefore, we expect that we can generalize
\cref{conjecture-rr-euler,conjecture-rr-bertini} for 
integral affine manifolds with singularities
and this description is compatible with
tropical contractions from tropical manifolds
to integral affine manifold with singularities
\cite{yamamoto2021tropical}.
\end{remark}

\begin{example}[{\cite[Example 2.11]{demedrano2023chern}}]
\label{example-TPn}
Let $e_i$ the $i$-th coordinate vector of $\mathbb{R}^n$
and $\Delta_{n}$ be the standard $n$-simplex generated by
$e_i$ ($i=1,\ldots,n$) and the origin of $\mathbb{R}^n$.
The normal fan $\Sigma$ of $\Delta_{n}$ induces 
a compactification $X_{\Sigma}$ of $\mathbb{R}^n$.
The space $X_{\Sigma}$ is a typical example of
tropical toric varieties \cite{MR2428356,MR2511632}
and $X_{\Sigma}$ is
isomorphic to the tropical projective
$n$-space $\mathbb{T}P^{n}$ \cite[Example 3.10]{MR2275625}.
From a direct calculation of the \v{C}ech cohomology of
$\mathcal{O}_{X_{\Sigma}}^{\times}$, we get 
$H^{1}(X_{\Sigma};\mathcal{O}_{X_{\Sigma}}^{\times})
\simeq H^{1,1}(X_{\Sigma};\mathbb{Z})\simeq \mathbb{Z}$.
In general, the Picard group of toric 
schemes over semifields is studied in \cite{MR4016643}. 
(See also \cite{Meyer2011} and
\cite[Chapter 3]{mikhalkin2018tropical} for more detail
about tropical toric varieties.)
Let $f$ be a Laurent polynomial on $\mathbb{R}^{n}$
whose Newton polytope is $d\Delta_{n}$.
The closure $\overline{V_{\mathbb{T}}(f)}$ of 
$V_{\mathbb{T}}(f)$ in $X_{\Sigma}$
is can be considered
as an analog of a hypersurface of projective $n$-space of
a homogeneous polynomial of degree $d$
(see also \cite[Definition 3.4.6]{mikhalkin2018tropical}).
The degree of $\overline{V_{\mathbb{T}}(f)}$
on $\mathbb{T}P^n$ is equal to 
that of $f$ and the first Chern class 
of the line bundle
$\mathcal{L}(\overline{V_{\mathbb{T}}(f)})$.
The tropical hypersurface $V_{\mathbb{T}}(f)$ of $f$
is \emph{smooth} (in the sense of
\cite[\textsection 4.5]{MR3287221})
if the regular subdivision of $f$ is unimodular.
If the regular subdivision of $f$ is unimodular,
then $\overline{V_{\mathbb{T}}(f)}$ is a tropical manifold.
Let $d\Delta_n(\mathbb{Z})$ be the set of lattice points
in $d\Delta_n$, and 
$\opn{int}(d\Delta_n)(\mathbb{Z})$ the set of lattice points
in the (relative) interior of $d\Delta_n$.
It is well-known that the complement 
$\mathbb{T}P^{n}\setminus
\overline{V_{\mathbb{T}}(f)}$ is homotopy equivalent
to $d\Delta_n(\mathbb{Z})$ and 
$\overline{V_{\mathbb{T}}(f)}$ is homotopy equivalent
to the $\sharp (\opn{int}(d\Delta_n)(\mathbb{Z}))$-th
bouquet of $(n-1)$-spheres
(e.g. \cite[Proposition 3.1.6]{MR3287221}
or \cite[Proposition 3.4.12]{mikhalkin2018tropical}).
In particular, every connected component
of $\mathbb{T}P^{n}\setminus
\overline{V_{\mathbb{T}}(f)}$ is a locally closed
polyhedron in 
$\mathbb{T}P^{n}$, so it is contractible.
Moreover, the definition of Chern classes of
tropical toric manifolds
is compatible with
that of algebraic toric manifolds
(cf. \cite[Proposition 13.1.2]{MR2810322}).
Therefore, the Todd class of $\mathbb{T}P^{n}$
also has the same representation of that of
algebraic toric manifolds
(see also \cite[Theorem 13.1.6]{MR2810322}). 
Therefore, we have
\begin{align}
\opn{RR}(\mathbb{T}P^{n};\overline{V_{\mathbb{T}}(f)})=
\sharp d\Delta_n(\mathbb{Z})=
\chi(\mathbb{T}P^{n}\setminus
\overline{V_{\mathbb{T}}(f)}).
\end{align}

We can also get the following equation similarly:
\begin{align}
\label{equation-int-polytope}
\opn{RR}(\mathbb{T}P^{n};-\overline{V_{\mathbb{T}}(f)})=
(-1)^{n}\sharp \opn{int}(d\Delta_n(\mathbb{Z}))=
\chi_{c}(\mathbb{T}P^{n}\setminus
\overline{V_{\mathbb{T}}(f)}).
\end{align}
The second equation of \eqref{equation-int-polytope}
follows from that 
$\chi_{c}(\mathbb{T}P^{n}\setminus
\overline{V_{\mathbb{T}}(f)})
=1-\chi (\overline{V_{\mathbb{T}}(f)})$
and $\overline{V_{\mathbb{T}}(f)}$ is homotopy 
equivalent to the 
$\sharp \opn{int}(d\Delta_n(\mathbb{Z}))$-th
bouquet of $(n-1)$-dimensional spheres.
\end{example}

\subsection{Relatively uniform tropical submanifolds}
In this subsection, we define a good class
of tropical submanifolds in moderate position
and prove the second equation of
\cref{conjecture-rr-euler} for this case
in \cref{theorem-euler-power}. 
\begin{definition}
\label{definition-relatively-uniform}
Let $X$ be an $n$-dimensional tropical manifold $X$
and $\iota\colon D\to X$ is the embedding map of
a codimension-1 and sedentarity-0 tropical submanifold
$D$ on $X$.
The tropical submanifold $D$ of $X$
is \emph{relatively uniform} on $X$ if, for any $x\in D$, 
the pushforward morphism
$\iota_{*x}\colon\opn{LC}_{x}D\to \opn{LC}_{x}X$ is isomorphic to
\begin{align}
\label{equation-relatively-uniform-map}
\opn{id}_{L_M}\times i\colon L_{M}\times L_{U_{r,r+1}}\to 
L_{M}\times L_{U_{r+1,r+1}}
\end{align}
where
$M$ is a loopless matroid, $r$ is a positive integer
and $i$ is the inclusion map induced from the inclusion
$U_{r,r+1}\subset U_{r+1,r+1}$. 
\end{definition}

We note that \eqref{equation-relatively-uniform-map}
itself can be represented by a matroidal map
induced from the parallel connection of matroids
(see \cite[Lemma 3.1]{MR4246795}
or \cite[Proposition 3.7]{demedrano2023chern}).
We don't know whether there exists a
tropical submanifold in moderate position but
not relatively uniform, so
\cref{definition-relatively-uniform} may be equivalent
to \cref{definition-permissible-position}.

In the following proposition, we use
\cref{notation-power-divisor} but it is elementary.
\begin{proposition}
Let $X$ be a purely $n$-dimensional tropical manifold
and $D$ a relatively uniform tropical submanifold of
codimension $1$ on $X$.
Then, for any $x\in |D^{j}|$ for some $j\in \mathbb{Z}_{>0}$
there exists the following isomorphism:
\begin{align}
\label{equation-relatively-uniform-cone}
\opn{LC}_x |D^{j}|\simeq 
L_{M_{x}}
\times L_{U_{r-j+1,r+1}}
\end{align}
where $M_x$ is a loopless matroid which is independent of
the choice of $j$.
\end{proposition}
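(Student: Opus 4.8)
The plan is to reduce the statement to a single computation — the iterated self-intersection of a standard tropical hyperplane — and then to resolve that computation combinatorially. The power $D^{j}$ is built from the tropical self-intersection product (\cref{notation-power-divisor}), and this product is local: passing to stars commutes with stable intersection, so for $x\in|D^{j}|$ one has
\[
\opn{LC}_{x}|D^{j}|=\bigl|(\opn{LC}_{x}D)^{\cdot j}\bigr|,
\]
the $j$-fold stable self-intersection of the cycle $\opn{LC}_{x}D$ taken inside $\opn{LC}_{x}X$. Since $|D^{j}|\subseteq|D|$, the point $x$ lies on $D$ and the relatively uniform local model applies at $x$.

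First I would invoke \cref{definition-relatively-uniform}: at $x$ the inclusion $\opn{LC}_{x}D\hookto\opn{LC}_{x}X$ is isomorphic to the standard model $L_{M}\times L_{U_{r,r+1}}\hookto L_{M}\times L_{U_{r+1,r+1}}$, where the loopless matroid $M$ and the integer $r$ depend only on $x$ (not on $j$). Because stable intersection is intrinsic, I may compute in this model. Writing $\pi\colon L_{M}\times L_{U_{r+1,r+1}}\to L_{U_{r+1,r+1}}=\R^{r}$ for the projection, the divisor $L_{M}\times L_{U_{r,r+1}}$ is the preimage $\pi^{-1}(H)$ of the tropical hyperplane $H\deq L_{U_{r,r+1}}$. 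Choosing a generic displacement vector inside the $\R^{r}$ factor, the fan displacement rule yields $(\pi^{-1}H)^{\cdot j}=L_{M}\times H^{\cdot j}$, so everything is reduced to computing $H^{\cdot j}$ in $\R^{r}$.

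The heart of the argument is to prove $|H^{\cdot j}|=L_{U_{r-j+1,r+1}}$. I would write $H=\opn{div}(\phi)$ for $\phi=\max(x_{1},\dots,x_{r+1})$ on $\R^{r+1}/\R\mathbf{1}$, and show by induction on $j$ that the cycle obtained after $j$ successive corner loci of $\phi$ is the fan supported on the locus where $\phi$ is attained by at least $j+1$ of the coordinates, all weights being $1$ (the inductive step restricts $\phi$ to a maximal cone of $|H^{\cdot(j-1)}|$, where the maximum is attained by exactly $j$ coordinates, and reads off the new bend locus). A short counting lemma then identifies this locus with the Bergman fan of $U_{r-j+1,r+1}$ through its circuit description: $w$ lies in $L_{U_{r-j+1,r+1}}$ iff on every $(r-j+2)$-element subset the maximum of $w$ is attained at least twice, and — since such a subset omits at most $j-1$ coordinates — this is equivalent to the global maximum of $w$ being attained at least $j+1$ times. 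Hence the supports coincide, and
\[
\opn{LC}_{x}|D^{j}|\simeq L_{M}\times H^{\cdot j}=L_{M}\times L_{U_{r-j+1,r+1}}.
\]

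Setting $M_{x}\deq M$ finishes the proof, and this matroid is manifestly independent of $j$; the constraint $x\in|D^{j}|$ corresponds exactly to $r-j+1\geq1$, i.e.\ $j\leq r$. I expect the main obstacle to be the third paragraph: carefully bookkeeping the iterated corner locus and confirming that every maximal cone of $H^{\cdot j}$ carries weight $1$, so that the \emph{support} is all of $L_{U_{r-j+1,r+1}}$ and not a proper subfan. Secondary points requiring care are the multiplicativity $(\pi^{-1}H)^{\cdot j}=L_{M}\times H^{\cdot j}$ under the displacement rule and the localization of the self-intersection product to stars, both of which I would treat as standard facts of tropical intersection theory.
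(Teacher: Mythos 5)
Your proposal is correct and follows essentially the same route as the paper's proof: localize at $x$ to the relatively uniform model $L_M\times L_{U_{r,r+1}}\subset L_M\times L_{U_{r+1,r+1}}$, split off the $L_M$ factor (the paper does this by writing $D$ as a pullback of $L_{U_{r,r+1}}$ along the projection, you by displacement within the $\mathbb{R}^r$ factor), and reduce everything to the computation $|(L_{U_{r,r+1}})^{j}|=L_{U_{r-j+1,r+1}}$. The only difference is that the paper simply cites this last computation (\cite[Example 3.9]{MR2591823}) while you re-derive it by induction on corner loci of $\max(x_1,\ldots,x_{r+1})$ together with the circuit description of Bergman fans of uniform matroids, both of which are correct.
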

\begin{proof}
By direct calculation, we have 
$|(L_{U_{r,r+1}})^{j}|=L_{U_{r-j+1,r+1}}$
(see \cite[Example 3.9]{MR2591823}).
The proposition is local, so we may assume 
$X=\mathbb{T}^{m}\times L_{M}\times L_{U_{r+1,r+1}}$ and 
$D=\mathbb{T}^{m}\times 
L_{M}\times L_{U_{r,r+1}}$
for some loopless matroid $M$.
Let $\pi_{|D^j|} \colon |D^{j}|\to L_{U_{r-j+1,r+1}}$
be the projection.
Then, $D=\pi_{X}^*L_{U_{r,r+1}}$ and 
$D|_D=\pi_D^{*}(L_{U_{r,r+1}}|_{L_{U_{r,r+1}}})
=\pi_D^{*}L_{U_{r-1,r+1}}=
\mathbb{T}^{m}\times L_{M}\times L_{U_{r-1,r+1}}$.
By repeating this, we obtain
\eqref{equation-relatively-uniform-cone}.
\end{proof}

The following proposition is also elementary,
but important.

\begin{proposition}
\label{proposition-cpt-complement}
Let $X\deq L_{U_{n+1,n+1}}\simeq \mathbb{R}^n$ and
$D\deq L_{U_{n,n+1}}$. 
For $U_i\in \pi_0(X\setminus D)$, we write 
$\overline{U_i}$ the closure of $U_i$ in $X$
and $(X\setminus D)^{\dagger}\deq \bigsqcup_{\pi_0(X\setminus D)}
\overline{U_i}$.
Then, the canonical inclusion
$i\colon X\setminus D\to
(X\setminus D)^{\dagger}$ is
locally aspheric in the sense of
\cite[Chapter V. Corollary 1.3.2]{MR3838359}, and
the composition of $i$ with
the canonical map $\iota\colon
(X\setminus D)^{\dagger}\to X$ is the inclusion 
map $j\colon X\setminus D\to X$. 
In particular,
\begin{align}
Rj_*\mathbb{R}_{X\setminus D}=\iota_!
\mathbb{R}_{(X\setminus D)^{\dagger}}.
\end{align}

\end{proposition}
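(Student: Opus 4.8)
The plan is to realize the open inclusion $j\colon X\setminus D\to X$ as the composite $j=\iota\circ i$ and then transport the constant sheaf $\mathbb{R}_{X\setminus D}$ through each factor, using local asphericity on the first factor and properness on the second. First I would record the global topology of the complement. Since $X\deq L_{U_{n+1,n+1}}\simeq\mathbb{R}^{n}$ and $D\deq L_{U_{n,n+1}}$ is the standard tropical hyperplane, writing $\mathbb{R}^{n}=\mathbb{R}^{n+1}/\mathbb{R}\mathbf{1}$ the complement $X\setminus D$ is the union of the $n+1$ open chambers $U_{0},\dots,U_{n}$, where $U_{i}$ is cut out by the strict inequalities forcing the $i$-th coordinate to be the unique minimum. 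Each $U_{i}$ is an open convex cone, these are exactly the connected components indexed by $\pi_{0}(X\setminus D)$, each closure $\overline{U_{i}}=\{x_{i}\le x_{j}\ \forall j\}$ is a closed convex cone, and $U_{i}=\opn{int}(\overline{U_{i}})$. Thus $X\setminus D=\bigsqcup_{i}U_{i}$ topologically, the map $i$ sending a point of $U_{i}$ to its copy in $\overline{U_{i}}\subset(X\setminus D)^{\dagger}$ is a well-defined continuous injection, and composing $i$ with $\iota$ recovers $j$ componentwise, which checks the asserted factorization $j=\iota\circ i$.

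Next I would verify that $i$ is locally aspheric. It suffices to exhibit, around each $y\in(X\setminus D)^{\dagger}$ lying in the component $\overline{U_{k}}$, a neighborhood basis of opens $V\subset\overline{U_{k}}$ whose preimages $i^{-1}(V)=U_{k}\cap V$ are aspheric. Letting $V$ range over small convex neighborhoods of $y$ in $\overline{U_{k}}$, the set $U_{k}\cap V$ is the intersection of the convex set $U_{k}$ with the convex set $V$; it is nonempty because $y\in\overline{U_{k}}$ and $U_{k}$ is open, hence convex and therefore contractible. This is exactly the local condition required by \cite[Chapter V. Corollary 1.3.2]{MR3838359}, so that corollary applies and the adjunction morphism $\mathbb{R}_{(X\setminus D)^{\dagger}}\to Ri_{*}\mathbb{R}_{X\setminus D}$ is an isomorphism.

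Finally I would push through $\iota$. The map $\iota$ is the disjoint union of the finitely many closed embeddings $\overline{U_{i}}\hookto X$, hence proper with finite fibers; proper pushforward agrees with ordinary pushforward, and pushforward along a closed embedding is exact, so $R\iota_{*}=\iota_{*}=\iota_{!}$ on the constant sheaf. Combining the two steps gives
\[
Rj_{*}\mathbb{R}_{X\setminus D}=R\iota_{*}Ri_{*}\mathbb{R}_{X\setminus D}=R\iota_{*}\mathbb{R}_{(X\setminus D)^{\dagger}}=\iota_{!}\mathbb{R}_{(X\setminus D)^{\dagger}},
\]
as claimed. As a consistency check one may compute stalks over $x\in D$ in the stratum where the coordinates indexed by a subset $S$ tie for the minimum: both sides have stalk $\mathbb{R}^{|S|}$ in degree $0$, reflecting the $|S|$ chambers adjacent to $x$.

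The only genuinely delicate point is the precise matching with the hypotheses of \cite[Chapter V. Corollary 1.3.2]{MR3838359}: one must make sure that the chosen convex sets form a neighborhood basis in the subspace topology of each $\overline{U_{i}}$ and that the notion of \emph{aspheric} used there is implied by contractibility, so that the corollary applies verbatim. Everything else — convexity of the chambers, the identity $U_{i}=\opn{int}(\overline{U_{i}})$, properness of $\iota$, and exactness of closed pushforward — is routine.
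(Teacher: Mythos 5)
Your proposal is correct. Note that the paper itself offers no proof of \cref{proposition-cpt-complement}: it is stated immediately after the sentence ``The following proposition is also elementary, but important,'' with no proof environment, so there is no written argument to compare against. Your write-up supplies exactly the argument the paper implicitly relies on: the complement of the standard tropical hyperplane $L_{U_{n,n+1}}$ consists of the $n+1$ open convex chambers in which one coordinate is the strict minimum; intersecting a convex neighborhood basis of a point of $\overline{U_k}$ with the dense open convex set $U_k$ yields nonempty convex, hence contractible, preimages, which gives local asphericity of $i$ and so $\mathbb{R}_{(X\setminus D)^{\dagger}}\simeq Ri_*\mathbb{R}_{X\setminus D}$; and $\iota$ is a finite disjoint union of closed embeddings, hence proper with exact pushforward, so $R\iota_*=\iota_*=\iota_!$ and $Rj_*\mathbb{R}_{X\setminus D}=R\iota_*Ri_*\mathbb{R}_{X\setminus D}=\iota_!\mathbb{R}_{(X\setminus D)^{\dagger}}$. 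Your stalk computation ($\mathbb{R}^{\lvert S\rvert}$ in degree $0$ at a point where the coordinates indexed by $S$ tie for the minimum) is also the content the paper actually uses downstream, namely the local index formula \eqref{equation-local-index} in the proof of \cref{theorem-euler-power}, so your check confirms the consistency of the two statements.
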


\begin{remark}
We can generalize 
the above partial compactification
$(X\setminus D)^{\dagger}$ and 
\cref{proposition-cpt-complement} when
$X$ is a tropical manifold and $D$ is a
relatively uniform tropical submanifold of codimension
$1$ on $X$ naturally.
When $X$ and $D$ is compact, then the partial compactification
$(X\setminus D)^{\dagger}$ of $X\setminus D$
is also compact. 
\end{remark}

\begin{theorem}
\label{theorem-euler-power}
Let $X$ be a purely $n$-dimensional compact tropical manifold
and $D$ a relatively uniform tropical submanifold of
codimension $1$ on $X$. Then,
\begin{align}
\label{equation-euler-power}
\chi(X\setminus D)=\sum_{k=0}^{\infty}
\chi(|D^{k}|)=\sum_{k=0}^{n}(k+1)\chi_c(|D^{k}|\setminus
|D^{k+1}|).
\end{align}
\end{theorem}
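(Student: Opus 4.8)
The plan is to prove the two equalities of \eqref{equation-euler-power} separately, the right-hand one being an elementary consequence of the additivity of the compactly supported Euler characteristic and the left-hand one resting on a local chamber count supplied by the relatively uniform structure. First I would record the combinatorial setup. Since $D$ has codimension $1$, the isomorphism \eqref{equation-relatively-uniform-cone} shows that each $|D^{k}|$ is a closed rational polyhedral subspace of codimension $k$; hence $|D^{n+1}|=\emptyset$ and the chain $X=|D^{0}|\supseteq|D^{1}|\supseteq\cdots\supseteq|D^{n}|\supseteq|D^{n+1}|=\emptyset$ stratifies $X$ into the locally closed pieces $|D^{k}|\setminus|D^{k+1}|$ for $0\le k\le n$. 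Because $X$ is compact, every $|D^{k}|$ is compact and thus $\chi(|D^{k}|)=\chi_{c}(|D^{k}|)$.

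For the \emph{second} equality I would apply additivity of $\chi_{c}$ over the stratification of the compact set $|D^{k}|$, giving $\chi_{c}(|D^{k}|)=\sum_{j\ge k}\chi_{c}(|D^{j}|\setminus|D^{j+1}|)$. Summing over $k$ and interchanging the (finite) order of summation yields
\[
\sum_{k=0}^{\infty}\chi(|D^{k}|)=\sum_{k=0}^{n}\sum_{j\ge k}\chi_{c}(|D^{j}|\setminus|D^{j+1}|)=\sum_{j=0}^{n}(j+1)\,\chi_{c}(|D^{j}|\setminus|D^{j+1}|),
\]
which is exactly the right-hand equality of \eqref{equation-euler-power}.

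For the \emph{first} equality, let $j\colon X\setminus D\hookto X$ be the open inclusion, so that $H^{\bullet}(X\setminus D;\mathbb{R})=H^{\bullet}(X;Rj_{*}\mathbb{R}_{X\setminus D})$; the key is to compute the stalks of $Rj_{*}\mathbb{R}_{X\setminus D}$ along the stratification. Fixing $x\in|D^{k}|\setminus|D^{k+1}|$, I would first argue that $r(x)=k$ in the relatively uniform model at $x$: the point $x$ is the apex of the local cones in \eqref{equation-relatively-uniform-cone}, so it lies in $|D^{j}|$ precisely for $j\le r(x)$, and $x\in|D^{k}|\setminus|D^{k+1}|$ forces $r(x)=k$. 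Consequently $\opn{LC}_{x}X\simeq L_{M_{x}}\times L_{U_{k+1,k+1}}$ and $\opn{LC}_{x}D\simeq L_{M_{x}}\times L_{U_{k,k+1}}$, so a small conical neighbourhood of $x$ meets $X\setminus D$ in a set homotopy equivalent to $L_{M_{x}}\times(\mathbb{R}^{k}\setminus L_{U_{k,k+1}})$. Since the complement of the tropical hyperplane $L_{U_{k,k+1}}$ in $\mathbb{R}^{k}$ consists of $k+1$ convex chambers and $L_{M_{x}}$ is contractible, this set is homotopy equivalent to $k+1$ contractible pieces and $(Rj_{*}\mathbb{R}_{X\setminus D})_{x}\simeq\mathbb{R}^{k+1}$ in degree $0$. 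This is precisely the content of the generalized form of \cref{proposition-cpt-complement}, where $Rj_{*}\mathbb{R}_{X\setminus D}=\iota_{!}\mathbb{R}_{(X\setminus D)^{\dagger}}$ and the stalk rank records the number $k+1$ of local branches of $(X\setminus D)^{\dagger}$ over $x$. Applying additivity of $\chi_{c}$ to this constructible sheaf on the compact $X$ then gives
\[
\chi(X\setminus D)=\chi\bigl(X;Rj_{*}\mathbb{R}_{X\setminus D}\bigr)=\sum_{k=0}^{n}(k+1)\,\chi_{c}(|D^{k}|\setminus|D^{k+1}|),
\]
and combining this with the second equality proves $\chi(X\setminus D)=\sum_{k}\chi(|D^{k}|)$.

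The routine part is the two applications of additivity of $\chi_{c}$; the substantive points are (i) the identification $r(x)=k$ together with the local chamber count, which requires that $\mathbb{R}^{k}\setminus L_{U_{k,k+1}}$ has exactly $k+1$ contractible components and that $L_{M_{x}}$ is contractible, and (ii) justifying the generalized form of \cref{proposition-cpt-complement} for a relatively uniform $D$, in particular that $(X\setminus D)^{\dagger}$ is compact—so that $\iota$ is proper and $\iota_{!}=R\iota_{!}=R\iota_{*}$—and that $X\setminus D\hookto(X\setminus D)^{\dagger}$ is locally aspheric. The hard part will be (ii), since that is where the global geometry of the partial compactification, rather than the purely local model, enters; the local stalk computation in (i) is comparatively mechanical once the relatively uniform normal form is in hand.
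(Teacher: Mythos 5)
Your proposal is correct and follows essentially the same route as the paper: compute the stalks of $Rj_*\mathbb{R}_{X\setminus D}$ from the relatively uniform local models (rank $k+1$ in degree $0$ on $|D^{k}|\setminus|D^{k+1}|$, so $Rj_*\mathbb{R}_{X\setminus D}\simeq j_*\mathbb{R}_{X\setminus D}$), then evaluate $\chi(X;j_*\mathbb{R}_{X\setminus D})$ by Euler calculus for constructible sheaves, with constructibility supplied by the generalized form of \cref{proposition-cpt-complement}, exactly as the paper does. The only presentational difference is that the paper carries out the Euler-calculus step chart-by-chart (embedding charts subanalytically via $\exp$, invoking the Kashiwara--Schapira index theorem, and gluing with Mayer--Vietoris and inclusion--exclusion), whereas you invoke it globally on the compact $X$ and make the reindexing for the second equality explicit.
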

\begin{proof}
Let $j\colon X\setminus D\to X$ be the inclusion map
of $X\setminus D$.
Then, we have 
\begin{align}
H^{\bullet}(X\setminus D;\mathbb{R}_{X\setminus D})
\simeq \mathbb{H}^{\bullet}(X;Rj_*\mathbb{R}_{X\setminus D}).
\end{align}
From \cref{equation-relatively-uniform-cone}, we also get
\begin{align}
\label{equation-local-index}
Rj_*\mathbb{R}_{X\setminus D}\simeq j_*\mathbb{R}_{X\setminus D},
\quad \chi((j_*\mathbb{R}_{X\setminus D})_x)=
\sum_{k=0}^{\infty}1_{|D^{k}|}(x).
\end{align}
Fix a finite tropical atlas $\mathcal{U}$ for $X$.
From the inclusion-exclusion principle and the Meyer--Vietoris
sequence of sheaves
(e.g. \cite[p.185]{MR842190}), we only need to check
\begin{align}
\label{equation-open-euler-calculus}
\chi(H^{\bullet}_c(U;(j_*\mathbb{R}_{X\setminus D})|_{U}))
=\sum_{k=0}^{\infty}\chi_c(|D^{k}|\cap U).
\end{align}
for any $(U,\phi)\in \mathcal{U}$.
Since $\mathbb{T}^{m}$ is homeomorphic to 
$\mathbb{R}_{\geq 0}^{m}$ via the extended exponential map
$\opn{exp}\colon \mathbb{T}^{m} \to \mathbb{R}_{\geq 0}^{m}$,
we may consider every rational
polyhedral set in some $\mathbb{T}^{m}$ as a
subanalytic set in $\mathbb{R}^{m}$.
From definition, we can choose an open subset $V$ of 
$\mathbb{R}^{m}$ such that $\opn{exp}\circ\phi(U)$ is
a closed subset of $V$.
The restriction of every rational polyhedral set in $\mathbb{T}^{n}$
on $V$ is also subanalytic \cite[Proposition 8.2.2.(iii)]{MR1299726}
on $V$. Let $\iota\deq \opn{exp}\circ \phi$.
Then, $\iota_*(j_*\mathbb{R}_{X\setminus D})|_{U}$
is an $\mathbb{R}$-constructible sheaf on $V$
\cite[Definition 8.4.3]{MR1299726} from \cref{proposition-cpt-complement}.
From \cite[Theorem 9.7.1]{MR1299726} and 
\eqref{equation-local-index}, we obtain
\eqref{equation-open-euler-calculus}.
\end{proof}

\begin{remark}
In the proof of \cref{theorem-euler-power},
we considered $Rj_*\mathbb{R}_{X\setminus D}$
in order to calculate
the Euler characteristic of $X\setminus D$.
This method comes from \emph{Euler calculus}
\cite{MR970076,MR1115569}.
Euler calculus is an effective way to calculate
the Euler characteristic of the sheaf cohomology of
constructible sheaves. We can consider 
the Euler characteristic of compact supports of 
locally closed subsets of $X$
as an analog of signed measure,
and the RHS of \cref{equation-euler-power} as an
integration of the following simple function over it
\cite[(3.4)]{MR1115569}:
\begin{align}
\chi(Rj_*\mathbb{R}_{X\setminus D})(x)\deq 
\chi((Rj_*\mathbb{R}_{X\setminus D})_x) 
\simeq \chi(\varinjlim_{U\ni x} H^{\bullet}((X\setminus D)
\cap U;\mathbb{R})), \quad (x\in X).
\end{align}
The value $\chi(Rj_*\mathbb{R}_{X\setminus D})(x)$ is
called the \emph{local Euler--Poincar\'e index} of 
$Rj_*\mathbb{R}_{X\setminus D}$ at $x$.
Euler calculus is also a useful way to
investigate other constructible sheaves
on tropical manifolds.
As stressed in \cite[Remark 4.8]{MR4540954},
we can interpret the first equation of 
the tropical Poincar\'e--Hopf theorem
\cite[Theorem 4.7]{MR4540954} as 
the Euler integration of the total complex
$\Omega^{\bullet}_{\mathbb{Z},X}$ of the sheaves of
tropical $p$-forms.
\end{remark}

\subsection{For tropical surfaces}

For the proof of \cref{theorem-rr-euler-surface}, we 
use the following well-known 
proposition for Bergman fans: 
the support of the Bergman fan of
a loopless matroid of rank $2$ 
is isomorphic to the support of the Bergman fan of
a uniform matroid $U_{2,n}$ on
$[n]\deq \{1,\ldots,n\}$.
In fact, we can deduce it from that 
the supports of the Bergman fans of
loopless matroids are isomorphic
when the simplification of them are isomorphic.
Shaw classified local cones which comes from
for some tropical surface in
\cite[Corollary 2.4]{shaw2015tropical}.

\begin{proposition}
\label{proposition-self-intersection}
Let $S$ be a compact tropical surface
and $C$ is a tropical submanifold of codimension $1$
in moderate position on $S$. Then, 
\begin{align}
     |C^2|= C_{\mathrm{sing}}\cap S_{\mathrm{reg}}.
\end{align} 
\end{proposition}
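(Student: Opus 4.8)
The plan is to prove the equality pointwise and reduce everything to a classification of the local embeddings $\opn{LC}_x C \hookrightarrow \opn{LC}_x S$ for $x\in C$. Since the self-intersection is local, $x\in|C^2|$ precisely when the cone $\opn{LC}_x|C^2|$ is nonempty, and this cone is determined by the pair $(\opn{LC}_x C,\opn{LC}_x S)$. First I would stratify $C$ according to $d\deq\dim\opn{lineal}(S,x)$. As $\dim S=2$, Shaw's classification of surface local cones \cite[Corollary 2.4]{shaw2015tropical}, together with the product decomposition $\opn{LC}_x S\simeq\opn{lineal}(S,x)\times(\opn{LC}_x S/\opn{lineal}(S,x))$ recorded in the remark preceding \cref{definition-permissible-position}, forces $d\in\{0,1,2\}$: one gets $\opn{LC}_x S\simeq\mathbb{R}^2$ when $d=2$ (so $x\in S_{\mathrm{reg}}$), while when $d=1$ the pointed factor is a one-dimensional matroidal fan, hence $\opn{LC}_x S\simeq\mathbb{R}\times L_{U_{2,n}}$ for some $n\geq 3$ by the rank-$2$ Bergman fan fact (so $x\in S_{\mathrm{sing}}$). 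The case $d=0$ is excluded immediately, since, as noted after \cref{definition-permissible-position}, a subspace in moderate position is disjoint from the locus where the ambient lineality is trivial. Thus only $d=2$ and $d=1$ remain.

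When $d=2$ the curve $\opn{LC}_x C$ is a one-dimensional balanced fan in $\mathbb{R}^2$ whose lineality is a proper subspace. Using that the support of a rank-$2$ Bergman fan depends only on the simplification and is isomorphic to some $L_{U_{2,m}}$, and that a locally matroidal inclusion into $\mathbb{R}^2=L_{U_{3,3}}$ forces a common three-element ground set, I would argue that $\opn{LC}_x C$ is either a straight line (when $x\in C_{\mathrm{reg}}$) or the trivalent fan $L_{U_{2,3}}$ (when $x\in C_{\mathrm{sing}}$). In the first subcase the embedding is relatively uniform with $r=1$ and $M_x=U_{2,2}$, in the second with $r=2$ and trivial $M_x$; applying the local computation \eqref{equation-relatively-uniform-cone} with $j=2$ then gives $\opn{LC}_x|C^2|\simeq L_{U_{0,2}}=\varnothing$ and $\opn{LC}_x|C^2|\simeq L_{U_{1,3}}\simeq\{0\}$ respectively. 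This yields $|C^2|\cap S_{\mathrm{reg}}=C_{\mathrm{sing}}\cap S_{\mathrm{reg}}$, matching the classical fact that a tropical line in the plane has a single self-intersection point concentrated at its vertex.

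When $d=1$, moderate position forces $\opn{lineal}(C,x)=\{0\}$, so $C$ is transverse to the one-dimensional ambient lineality $\mathbb{R}\times\{0\}$. The key step, and the main obstacle, is to show that a locally matroidal codimension-$1$ embedding $\opn{LC}_x C\hookrightarrow \mathbb{R}\times L_{U_{2,n}}$ with trivial lineality must be the slice $\{0\}\times L_{U_{2,n}}$, i.e. relatively uniform with $r=1$ and $M_x=U_{2,n}$; the geometric content is that $C$ then occupies a single level in the lineality direction, so displacing $C$ along that direction separates it from itself and the stable self-intersection vanishes. Granting this, \eqref{equation-relatively-uniform-cone} gives $\opn{LC}_x|C^2|\simeq L_{U_{2,n}}\times L_{U_{0,2}}=\varnothing$, so $|C^2|\cap S_{\mathrm{sing}}=\varnothing$, and combining the two cases gives $|C^2|=C_{\mathrm{sing}}\cap S_{\mathrm{reg}}$. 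I expect the remaining difficulty to be exactly this local classification at surface-singular points: ruling out codimension-$1$ submanifolds that spread into the lineality direction (which would violate relative uniformity) should follow from the matroidal-embedding constraint together with balancing, but it is the step requiring the most care.
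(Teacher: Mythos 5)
Your overall architecture mirrors the paper's proof: stratify $C$ by $d\deq\dim\opn{lineal}(S,x)$, discard $d=0$ by moderate position, classify the local embeddings $\opn{LC}_xC\subset\opn{LC}_xS$, and convert each local model into a statement about $|C^2|$. Your case $d=2$ is correct and essentially identical to the paper's (a line contributes nothing, a standard trivalent vertex contributes the point itself), and your device of recognizing each model as relatively uniform and then invoking \eqref{equation-relatively-uniform-cone} is a clean packaging of the local self-intersection computation. However, there is a genuine gap exactly where the proposition has its mathematical content: in the case $d=1$ you write ``Granting this\dots'' for the claim that a codimension-$1$ tropical submanifold germ in $\mathbb{R}\times L_{U_{2,n}}$ with trivial lineality must be (isomorphic to) the slice $\{0\}\times L_{U_{2,n}}$, and you never prove it. This is not a routine consequence one can wave through with ``matroidal-embedding constraint together with balancing''; it is the step on which the paper spends essentially its entire proof, and without it you have only shown $|C^2|\cap S_{\mathrm{reg}}=C_{\mathrm{sing}}\cap S_{\mathrm{reg}}$, not the vanishing of $|C^2|$ over $S_{\mathrm{sing}}$.

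For comparison, here is how the paper closes that step. First it proves $\opn{LC}_xC\cap\opn{lineal}(S,x)=\{0\}$ by contradiction, using that otherwise the image $L$ of $T_xC$ in $T_xS/\opn{lineal}(S,x)$ would be a proper subspace meeting the projected cone in a set whose convex hull contains no nontrivial linear subspace, which is incompatible with $\dim T_0(\opn{LC}_xC)\geq 2$. This gives $T_xC\cap\opn{lineal}(S,x)=\{0\}$, so the projection to $\opn{LC}_xS/\opn{lineal}(S,x)\simeq L_{U_{2,m}}$ is injective on $\opn{LC}_xC$; balancing, together with the fact that no proper subset of the rays of $L_{U_{2,m}}$ admits a positive linear relation, then forces the projected curve to use all $m$ rays with equal weight, so that $\opn{LC}_xC$ is the full transverse section $T_xC\cap\opn{LC}_xS\simeq L_{U_{2,m}}$. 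Being an intersection cycle of the plane $T_xC$ with $\opn{LC}_xS$, its stable self-intersection is trivial — this is the rigorous form of your displacement heuristic. Two smaller points: (i) your literal claim that the germ ``must be the slice $\{0\}\times L_{U_{2,n}}$'' is too strong — only a sheared slice $\{(v,\lambda(v)):v\in L_{U_{2,n}}\}$ is forced, which is isomorphic to the flat one and therefore still relatively uniform, so your subsequent computation survives; (ii) your dichotomy ``$d=1$ implies $\opn{LC}_xS\simeq\mathbb{R}\times L_{U_{2,n}}$ with $n\geq3$'' silently excludes points $x\in C_\infty$, where $\opn{LC}_xS\simeq\mathbb{R}$ and $\opn{LC}_xC$ is a point; these are harmless (the local self-intersection vanishes trivially) but need a sentence in a complete argument.
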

\begin{proof}
First, we will see the classification of
the embeddings
$\opn{LC}_x C \subset \opn{LC}_x S$ of local cones.
If $C$ is in moderate position on $S$, then
$\dim \opn{lineal}(S,x)=1,2$ for any $x\in C$.
\begin{enumerate}[align=left,leftmargin=*]
\item Suppose $\dim \opn{lineal}(S,x)=2$.
Then, $\opn{LC}_x S\simeq \mathbb{R}^2$ and
$\opn{LC}_x C$ should be isomorphic to 
$L_{U_{2,3}}$ or $L_{U_{2,2}}$. 
In particular, $\opn{val}_C(x)=2,3$ in this case.
\item We assume $\dim \opn{lineal}(S,x)=1$ and
$x\in C\setminus C_{\infty}$. 
Then, $\dim \opn{lineal}(C,x)=0$.
In particular, $\opn{val}_C(x)\geq 3$.
From \cite[Corollary 2.4]{shaw2015tropical}, 
we may assume $\opn{LC}_x S\simeq L_{U_{2,m}}\times \mathbb{R}
\simeq L_{U_{2,m}\oplus U_{1,1}}$
for some $m(\geq 3)$.
From this description, we can check $\opn{val}_C(x)\leq m$.
In particular, we can consider $T_x C$ as 
a proper vector subspace of $T_x S$.

Besides, we can check 
$\opn{LC}_x C\cap \opn{lineal}(S,x)=\{0\}$
by indirect proof.
In fact, if $\opn{LC}_x C\cap \opn{lineal}(S,x)\ne \{0\}$,
then $L\deq \opn{Im} (T_x C\to T_x S/\opn{lineal}(S,x))$ is a
proper subspace of $T_x S/\opn{lineal}(S,x)$.
On the other hand,
the convex hull of 
$L\cap (\opn{LC}_x S/\opn{lineal}(S,x))$
does not contain any nontrivial vector subspace.
This contradicts with $\dim T_x S\geq \dim T_0(\opn{LC}_x C)\geq 2$.

Since $\opn{LC}_x C\cap \opn{lineal}(S,x)=\{0\}$,
$T_x C\cap \opn{lineal}(S,x)=\{0\}$. Then,
$T_x C\cap \opn{LC}_x S\simeq L_{U_{2,m}}\simeq \opn{LC}_x C$
from the Balancing condition of $\opn{LC}_x C$.
In particular, $\opn{LC}_x C$ is the intersection cycle of
$T_x C$ and $\opn{LC}_x S$ in $T_x S$, so the self-intersection cycle
of $\opn{LC}_x C$ in $\opn{LC}_x S$ is trivial.
\end{enumerate}
From the above classification of the embeddings
of local cones, we obtain the proof. 

\end{proof}

\begin{remark}
We can also prove from \cite[Theorem 4.2]{MR3032930}
or \cite[Theorem 4.11]{shaw2015tropical}
since the intersection pairing in the sense of
\cite{MR4637248} is compatible with that of
\cite{shaw2015tropical,demedrano2023chern}
(see \cref{proposition-two-intersection}). 
\end{remark}

From now on, we will prove \cref{theorem-rr-euler-surface}.
Most of the proof is the same as \cref{theorem-euler-power},
but for simplicity,
we will show it independently of \cref{theorem-euler-power}.

\begin{proof}[{Proof of \cref{theorem-rr-euler-surface}}]
Let $j\colon S\setminus C\to S$ be the inclusion map
of $S\setminus C$.
Then, the following equation holds:
\begin{align}
\chi(S\setminus C)=\chi(Rj_*\mathbb{R}_{S\setminus C})
\deq \chi(\mathbb{H}^{\bullet}(S;Rj_*\mathbb{R}_{S\setminus C})).
\end{align}
If $C$ is in moderate position on $S$, then
the classification of the local cone of $C$ gives 
\begin{align}
(Rj_*\mathbb{R}_{S\setminus C})_x
\simeq
\begin{cases}
\mathbb{R}[0], \text{ if } x\in S\setminus C, \\
\mathbb{R}^3[0], \text{ if } x\in C_{\mathrm{sing}}
\cap S_{\mathrm{reg}}, \\
\mathbb{R}^2[0], \text{ otherwise.}  
\end{cases}  
\end{align}
In particular, $Rj_*\mathbb{R}_{S\setminus C}\simeq
j_*\mathbb{R}_{S\setminus C}$.
Since $j^{-1}j_*=\opn{id}_{S\setminus C}$,  
from \cite[Proposition 2.3.6 (v)]{MR1299726}
there exists the following short exact sequence:
\begin{align}
0\to j_!\mathbb{R}_{S\setminus C} 
\to j_* \mathbb{R}_{S\setminus C}
\to (j_*\mathbb{R}_{S\setminus C})_{C} \to 0.
\end{align}
In particular, we have
\begin{align}
\chi(S\setminus C)=
\chi_c(S\setminus C)+\chi((j_*\mathbb{R}_{S\setminus C})|_C).
\end{align}
From definition of the pushforward of sheaves, 
$(j_*\mathbb{R}_{S\setminus C})|_C$ is locally constant 
on $C_{\opn{reg}}$ and the stalk of it is $\mathbb{R}^2$.
Therefore, we also have
\begin{align}
\chi((j_*\mathbb{R}_{S\setminus C})|_C)=
2\chi_c(C_{\mathrm{reg}})+
3\chi_c(C_{\mathrm{sing}}\cap S_{\mathrm{reg}})+
2\chi_c(C_{\mathrm{sing}}\cap S_{\mathrm{sing}}).
\end{align}
Therefore, 
\begin{align}
\label{equation-euler-calculus}
\chi(S\setminus C)&=
\chi_c(S\setminus C)+
3\chi_c(C_{\mathrm{sing}}
\cap S_{\mathrm{reg}})+
2\chi_c(C\setminus (C_{\mathrm{sing}}
\cap S_{\mathrm{reg}}))\\
&=\chi_c(S\setminus C)+2\chi_c(C)+
\chi_c(C_{\mathrm{sing}}
\cap S_{\mathrm{reg}}) \notag \\
&=\chi(S)+\chi(C)+\chi(|C^2|) 
\label{equation-iterated-euler} \\
&=\frac{\opn{deg}(C. (C-K_S))}{2}+\chi(S). \notag
\end{align}
In the last equation, we use the adjunction formula of tropical
curves \cite[Theorem 4.11]{shaw2015tropical}.
\end{proof}

\begin{proof}[{Proof of \cref{corollary-ds-euler-rr}}]
It follows from
\cref{theorem-rr-euler-surface} and
\cref{proposition-cycle-chern}.
\end{proof}

\begin{remark}
\label{remark-c-infinity-divisor}
In this remark, we discuss relationships
between \cref{conjecture-rr-euler} and 
\cite{tsutsui2023graded} in some cases.
This remark highly depends on
\cite{tsutsui2023graded}, and you can
skip this remark to understand the main theorem of  
this paper. As explained in the introduction of this paper,
the Euler characteristic 
$\chi_c(X\setminus D)$ of
the cohomology of compact support of
the complement of a tropical submanifold $D$ on
a compact tropical manifold $X$ is an analog of  
\cref{equation-anti-effective-divisor}, so 
it is natural to expect that \cref{conjecture-rr-c-euler} holds.
We will explain about algebraic geometrical backgrounds of
the first equation of
\cref{conjecture-rr-euler} in
\cref{example-sum-formula,remark-grothendieck-group}.
However, the author does not know some theorem
in algebraic geometry or Berkovich geometry
which suggests the \emph{second} equation of
\cref{conjecture-rr-euler} directly
as like \cref{conjecture-rr-c-euler},
except toric varieties or special cases like them.
On the other hand, based on the study in \cite{tsutsui2023graded},
we can expect that 
$H^{\bullet}(X\setminus D)$ is highly related with
the theory of Floer cohomology of
Lagrangian submanifolds, the homological mirror symmetry
and the Strominger--Yau--Zaslow conjecture.
Below, we will see about it by using examples. 
For simplicity, let 
$X$ be $\mathbb{R}/\mathbb{Z}$ and $D$ a finite subset of
$X$. Then, the following isomorphisms of graded modules hold:
\begin{align}
H^{\bullet}(X\setminus D;\mathbb{Z}) 
&\simeq \bigoplus_{E\in \pi_0(X\setminus D)} 
H^{\bullet}(E;\mathbb{Z})
\simeq \bigoplus_{E\in \pi_0(X\setminus D)} \mathbb{Z}[0], \\
H_c^{\bullet}(X\setminus D;\mathbb{Z}) 
&\simeq \bigoplus_{E\in \pi_0(X\setminus D)} 
H^{\bullet}_c(E;\mathbb{Z})
\simeq \bigoplus_{E\in \pi_0(X\setminus D)} \mathbb{Z}[-1].
\end{align}

We can also define similar graded modules
for $C^{\infty}$-divisors on $X$.
In \cite{tsutsui2023graded},
the author proposed the following approach
inspired from microlocal sheaf theory and
the Strominger--Yau--Zaslow conjecture.

Let $\mathcal{A}^{0,0}_X$ be the sheaf of
$(0,0)$-superforms on $X$
\cite[Definition 2.24]{MR3903579}.
Then, the following exact sequence exists:
\begin{align}
0\to \mathcal{O}_X^{\times} \to \mathcal{A}^{0,0}_X
\to \mathcal{A}^{0,0}_X/\mathcal{O}_X^{\times} \to 0.
\end{align}

Since $\mathcal{A}^{0,0}_X$ is acyclic,
the connecting homomorphism
\begin{align}
H^{0}(X;\mathcal{A}^{0,0}_X/\mathcal{O}_X^{\times})
\to H^{1}(X;\mathcal{O}_X^{\times}); s\mapsto \mathcal{L}(s).
\end{align}
is surjective. The group 
$\opn{Div}^{\infty}(X)\deq 
H^{0}(X;\mathcal{A}^{0,0}_X/\mathcal{O}_X^{\times})$
is an analog of the group of Cartier divisors
when $X$ is a boundaryless tropical manifold.
In \cite{tsutsui2023graded}, elements 
in $\opn{Div}^{\infty}(X)$ are called 
\emph{$C^{\infty}$-divisors} on $X$.
Let $\mathcal{C}^{0}_X$ the sheaf of continuous functions
on $X$. Then, the group of tropical Cartier divisors on
$X$ and $\opn{Div}^{\infty}(X)$ is in 
$\Gamma (X;\mathcal{C}^{0}_X/\mathcal{O}_X^{\times})$.
We note that the author 
defined the group of $C^{\infty}$-divisor for \emph{any}
tropical manifold in \cite{tsutsui2023graded}.
If $B$ is an integral affine manifold,
then every $C^{\infty}$-divisor on $B$ defines
a Lagrangian section of the standard Lagrangian
torus fibration $\check{f}_B\colon \check{X}(B)\to B$
on $B$, we can consider $C^{\infty}$-divisors
as derivative objects of Lagrangian sections of
Lagrangian torus fibrations.

For a given finite set $D$ on
$X=\mathbb{R}/\mathbb{Z}$,
we can find a $C^{\infty}$-divisor $s$ on $X$
such that $\mathcal{L}(D)=\mathcal{L}(s)$ as follows.
Let $\pi\colon \mathbb{R}\to \mathbb{R}/\mathbb{Z}$
be the canonical universal covering.
The pullback $\pi^* D$ is a principal divisor 
of a convex function $f$ on $\mathbb{R}$ such that 
the set of strictly convex points of $f$ is $\pi^{-1}(D)$
and satisfies a quasi-periodicity.
By smoothing of $f$, we can find a quasi-periodic
convex $C^{\infty}$-function $g$ such that $f(x)=g(x)$ on 
the complement of a sufficiently 
small neighborhood of $\pi^{-1}(D)$. 
This $g$ gives a $C^{\infty}$-divisor $s_{\alpha}$ on
$X$ such that the associated line bundle on $X$ is equal
to $\mathcal{L}(D)$.
Moreover, we can deform $s_{\alpha}$ to another 
linearly equivalent 
$C^{\infty}$-divisor $s_{1}$ on $X$ such that 
the intersection $s_0\cap s_{1}\deq 
L_{0}\cap L_{s_{1}}$ of the associated 
Lagrangian section $L_{s_{1}}$
of $s_{1}$ and the zero section $L_0$ is contained in 
$X\setminus D$ and $\sharp (E\cap L_{0}\cap L_{s_{1}})=1$
for any $E\in \pi_0(X\setminus D)$.
Then, $s_{1}$ is a \emph{permissible} 
$C^{\infty}$-divisor and $\pi^{*}s_{1}$ is the 
principal divisor of a strictly convex
$C^{\infty}$-function.
More generally,
we can construct a map $s\colon [0,1]\to 
\Gamma (X;\mathcal{C}^{0}_X/\mathcal{O}_X^{\times})$ satisfies
\begin{enumerate}
\item $s(0)=D$, $s(1)=s_1$ and 
$s(t)$ is a prepermissible $C^{\infty}$-divisor
for all $t\in (0,1]$,
\item for all $t\in [0,1]$, the line bundle
$\mathcal{L}(s(t))\in \opn{Pic}(X)$ associated with
$s(t)$ is equal,
\item $X\setminus D\supset s_0\cap 
s(t)\supset s_0\cap s(u)$ for all $0<t\leq u \leq 1$,
\item $s_0\cap s(t)$ is homotopy equivalent to $X\setminus D$
for all $t\in (0,1]$,
\item for any $x\in X\setminus D$, there exists $t\in (0,1]$
such that $x\in s_0\cap s(t)$. 
\end{enumerate}

In \cite{tsutsui2023graded}, the author
defined the graded module $\opn{LMD}^{\bullet}(X;s)$
for a permissible $C^{\infty}$-divisor $s$ on
a compact tropical manifold $X$.
By using the theory of \cite{demedrano2023chern},
we can refine the Conjecture 1.2 in 
\cite{tsutsui2023graded}
like this: the Euler characteristic of
$\opn{LMD}^{\bullet}(X;s)$ 
is equal to the Riemann--Roch number of 
$\mathcal{L}(s)$.
(\cite[Conjecture 1.2]{tsutsui2023graded} does not
give an explicit definition of the Riemann--Roch number
since this conjecture appeared before
\cite{demedrano2023chern}.)

If $X=\mathbb{R}/\mathbb{Z}$, then 
$X$ is an integral affine manifold and  
the graded module $\opn{LMD}^{\bullet}(X;s_{1})$
for $s_{1}$ is isomorphic to the Floer complex
associated with the Lagrangian section $L_{s_{1}}$
of $s_{1}$ \cite[Remark 13]{MR1882331} as a graded
module (see also \cite[\textsection 4.4]{tsutsui2023graded}
for more detail about it). In our case, we have 
the following isomorphisms:
\begin{align}
\opn{LMD}^{\bullet}(X;s_{1}) &\simeq 
\bigoplus_{p\in s_0\cap s_{1}}\mathbb{Z}[0]
\simeq H^{\bullet}(X\setminus D;\mathbb{Z}), \\
\opn{LMD}^{\bullet}(X;-s_{1}) &\simeq
\bigoplus_{p\in s_0\cap s_{1}}\mathbb{Z}[-1]
\simeq H^{\bullet}_c(X\setminus D;\mathbb{Z}).
\end{align}

The method presented here is indeed not canonical,
but the isomorphisms are not coincidences. In fact, 
a similar logic works for tropical toric varieties.

Let $P$ be a top dimensional Delzant lattice polytope
in $\mathbb{R}^{n}$ and $X_P$ the tropical toric variety
of $P$. Recall that,
if $f$ is a tropical Laurent polynomial on 
$\mathbb{R}^{n}$ 
whose Newton polynomial is $P$ and the regular
subdivision of $f$ is unimodular, then the closure
$\overline{V(f)}$ of
the hypersurface of $f$ in $X_P$ is a tropical submanifold
of $X_P$ and the set of connected component of 
$X_P\setminus \overline{V(f)}$ is bijective with the set $P(\mathbb{Z})$
of lattice points in $P$.
By using the Maslov dequantization 
of a family of nonnegative valued Laurent polynomial
which converges to $f$, we get a 
$C^{\infty}$-divisor $s_f$ on $X_P$ whose line bundle is
isomorphic to that of $\overline{V(f)}$, 
$s_0\cap s_{f}\subset X_P\setminus \overline{V(f)}$,
and $\pi_0(s_0\cap s_{f})$ is bijective with 
$\pi_0(X_P\setminus \overline{V(f)})$.
Then, we have
\begin{align}
\opn{LMD}^{\bullet}(X_P;s_{f})
&\simeq \bigoplus_{p\in P\cap \mathbb{Z}^{n}}\mathbb{Z}[0]
\simeq \bigoplus_{W\in \pi_0(X_P\setminus \overline{V(f)})}
H^{\bullet}(W;\mathbb{Z}) \notag \\
&\simeq H^{\bullet}(X_P\setminus \overline{V(f)};\mathbb{Z}), \\
\opn{LMD}^{\bullet}(X_P;-s_{f})
&\simeq \bigoplus_{p\in \opn{int}(P)(\mathbb{Z})}
\mathbb{Z}[-\dim X_P]
\simeq \bigoplus_{W\in \pi_0(X_P\setminus \overline{V(f)})}
H^{\bullet}_c(W;\mathbb{Z}) \notag \\
&\simeq H^{\bullet}_c(X_P\setminus \overline{V(f)};\mathbb{Z}).
\end{align}
See also \cite[Appendix D]{tsutsui2023graded}
for more detail about $C^{\infty}$-divisors on
tropical toric manifolds associated with lattice polytopes.

From these examples, it seems that the cohomology
of the complement of tropical submanifolds $D$ of codimension
$1$ on a compact tropical manifold $X$ has a piece of data of
homological invariant of $\mathcal{L}(D)$.
From analogy of Floer cohomology,
the author expects that we can define the cohomology
of $\mathcal{L}(D)$ if we can define a good 
``Floer differential'' for the graded module
$H^{\bullet}(X\setminus D)$ (over some Novikov field).
One of reason why we need some ``Floer differential'' for
$H^{\bullet}(X\setminus D)$ is explained
in \cref{remark-complement-cohomology}.
We also note that Demazure's theorem
\cite{MR284446} is also related
with the current remark
(see also \cite[\textsection 9.1]{MR2810322}).
\end{remark}

\begin{remark}
\label{remark-complement-cohomology}
Let $C$ be a compact tropical curve and $D$
a finite set of $C\setminus C_{\mathrm{sing}}$.
Then, there exist the following relationships
between the rank of cohomology of compact support
of $C\setminus D$ and the Baker--Norine rank
$r(D)$ of $D$ \cite[Definition 1.12]{MR2377750}: 
\begin{align}
\label{equation-bn-rank-negative}
\opn{rank} H_c^{0}(C\setminus D)=r(D)+1=0, \quad
\opn{rank} H_c^{1}(C\setminus D)=r(K_C-D)+1.
\end{align}
The Baker--Norine rank $r(D)$ is an analog of 
the rank of linear systems of divisors on
algebraic varieties.
There is an explanation that the Baker--Norine
rank is truly a good analog of the rank of linear systems
in \cite{MR2448666}.
From \eqref{equation-bn-rank-negative},
we can say the rank of $H_c^{\bullet}(C\setminus D)$
also behave like the cohomology of a line bundle on
an algebraic variety.
On the other hand, the cohomology
$H^{\bullet}(C\setminus D)$ does not necessarily behave
as an analog of the cohomology of a line bundle on
an algebraic variety.

From now on, let $C$ be a compact tropical curve
of genus $2$ like \cite[Figure 1]{MR2457739}.
Let $E$ be a connected component of  
 $C\setminus C_{\mathrm{sing}}$, and
$D$ a nonempty finite subset of $E$. 
Then,
\begin{align}
\label{equation-cohomology-behaviour}
\opn{rank} H^{0}(C\setminus D)=\sharp(D), \quad
\opn{rank} H^{1}(C\setminus D)=1.
\end{align}

On the other hand, $\opn{deg}(K_C-D)=2-\sharp (D)$.
Therefore, when $\sharp (D)>2$, we get
\begin{align}
\label{equation-rank-behaviour}
r(D)+1=\sharp(D)-1,\quad r(K_C-D)+1=0.
\end{align}
In particular, 
$\opn{rank} H^{0}(C\setminus D)>r(D)+1$ in this case.
The equation $\opn{rank} H^{1}(C\setminus D)=1$ is
independent of the degree of $D$.
It is a very different point from behavior of 
the cohomology of invertible sheaves on algebraic varieties.
This is a main reason why we stress 
that we need to define a ``Floer differential'' for 
$H^{\bullet}(C\setminus D)$ in order to capture a
homological data of tropical line bundles 
in \cref{remark-c-infinity-divisor}.

We expect that we can define such an appropriate
differential of $H^{\bullet}(C\setminus D)$
by a combinatorial way as an analog of
the Floer complex of Lagrangian sections on 
trivalent graphs
\cite{auroux2022lagrangian}.
We also expect the homological mirror symmetry
gives a new perspective on the specialization
inequality of the Baker--Norine rank 
\cite[Lemma 2.8]{MR2448666}. 
\end{remark}

\section{The Riemann--Roch number of pairs of
tropical submanifolds}
\label{section-on-rr-bertini}
In this section, we discuss \cref{conjecture-rr-bertini}.
In order to discuss \cref{conjecture-rr-bertini},
we need preparation from the theory of tropical homology
and Chern classes of tropical manifolds.
First, we note 
the theory of Chern classes of tropical manifold
\cite{demedrano2023chern} is new and 
the Chern(--Schwartz--MacPherson) class of
tropical manifolds is not defined from a direct analog
of the theory of vector bundles on algebraic varieties.
Therefore, there is much that is not sufficiently
understood about Chern classes of tropical manifolds.
(The theory of tropical vector bundle is studied
in \cite{MR2961320,MR4646329} or 
\cite[Theorem 1.8]{amini2020hodge}.)
On the other hand, we can use facts
which arise from elementary properties of
multiplicative sequences (or $m$-sequences)
\cite[\textsection 1]{MR1335917}
(see also \cite[\textsection 19]{MR440554}).
From these properties,
we can investigate the Todd classes
of tropical manifolds to some extent.

In this section, we mainly
discuss properties which 
the Todd classes should have if
a natural conjecture for
tropical Chern classes
(\cref{conjecture-grr-divisor}) holds.
We also prove a generalization of 
\cref{theorem-rr-euler-surface}
in \cref{theorem-rr-bertini-surface}.

\subsection{Tropical homology}
We recall
the theory of tropical homology from
\cite{MR3330789,MR3894860,MR4637248}.
In this subsection, let
$Q$ be a subring of $\mathbb{R}$
and $p,q\in \mathbb{Z}$.
Let $(X,\mathcal{O}_X^{\times})$ be
a rational polyhedral space and 
$\Omega_{\mathbb{Z},X}^{p}$
the sheaf of tropical $p$-forms
on $X$ \cite[Definition 2.7]{MR4637248}
(cf. \cite[\textsection 2.4]{MR3330789}).
Let $\upomega_{X}^{\bullet}$ be
the dualizing complex of $X$
(e.g. \cite[Definition 3.1.16]{MR1299726}).
The $(p,q)$-tropical cohomology
$H^{p,q}(X;Q)$ and 
the $(p,q)$-tropical Borel--Moore homology
$H_{p,q}^{\mathrm{BM}}(X;Q)$ of
$(X,\mathcal{O}_X^{\times})$
are the following $Q$-modules
\cite[Definition 4.1 and 4.3]{MR4637248}:
\begin{align}
H^{p,q}(X;Q)&\deq \opn{Hom}_{D^{b}(\mathbb{Z}_X)}(
\mathbb{Z}_X,\Omega_{\mathbb{Z},X}^{p}[q])
\otimes_{\mathbb{Z}} Q, \\
H_{p,q}^{\mathrm{BM}}(X;Q)
&\deq \opn{Hom}_{D^{b}(\mathbb{Z}_X)}
(\Omega_{\mathbb{Z},X}^{p}[q],\upomega_{X}^{\bullet})
\otimes_{\mathbb{Z}}Q.
\end{align}

For comparison with the original
tropical homology \cite{MR3330789,MR3961331},
see also \cite[Remark 2.8 and Theorem 4.20]{MR4637248}.
Let $\Omega_{\mathbb{Z},X}^{\bullet}
\deq \bigoplus_{j\in \mathbb{Z}_{\geq 0}}
\Omega_{\mathbb{Z},X}^{j}[-j]$ 
be the total complex of 
$\Omega_{\mathbb{Z},X}^{p}$ 
(see \cite[Proposition 3.1]{smacka2017differential}).
Since $\Omega_{\mathbb{Z},X}^{\bullet}$ is a sheaf
of trivial and graded-commutative dga, so the
hypercohomology 
$\mathbb{H}^{\bullet}(X;\Omega_{\mathbb{Z},X}^{\bullet})$
of $\Omega_{\mathbb{Z},X}^{\bullet}$ has
a natural graded-commutative ring structure
(see \cite[Remark 21.130]{gortzwedhorn2023}).
Besides, $\Omega_{\mathbb{Z},X}^{\bullet}$ is trivial,
so there exists an isomorphism
$\mathbb{H}^{\bullet}(X;\Omega_{\mathbb{Z},X}^{\bullet})
\simeq \bigoplus_{p,q\in \mathbb{Z}}
H^{p,q}(X;\mathbb{Z})$ as Abelian groups.
If $f\colon X\to Y$ is a morphism of
rational polyhedral spaces, then
$f$ induces the pullback
$f^{*}\colon \mathbb{H}^{\bullet}(Y;\Omega_{\mathbb{Z},Y}^{\bullet})
\to \mathbb{H}^{\bullet}(X;\Omega_{\mathbb{Z},X}^{\bullet})$
\cite[Proposition 4.18]{MR4637248} and $f^{*}$ is
a graded ring homomorphism.
If $f\colon X\to Y$ is a proper morphism, then
$f$ induces the pushforward 
$f_*\colon H^{\opn{BM}}_{p,q}(X;Q)\to 
H^{\opn{BM}}_{p,q}(Y;Q)$ \cite[Definition 4.9]{MR4637248}.

Let $Z_k(X)$ be the group of tropical $k$-cycles
on $X$ \cite[Definition 3.5]{MR4637248}.
The presheaf $U\to Z_k(U)$ on $X$ is a sheaf,
and we write $\mathscr{Z}_k^{X}$ for it
\cite[p.591]{MR4637248}.
For any $k\in \mathbb{Z}_{\geq 0}$, there
exists the \emph{cycle map}
$\opn{cyc}_X \colon Z_k(X)\to 
H^{\mathrm{BM}}_{k,k}(X;\mathbb{Z})$
and $f_*\circ \opn{cyc}_X=\opn{cyc}_Y \circ f_*$
for any proper morphism $f\colon X\to Y$ of rational
polyhedral spaces 
\cite[Definition 5.4 and Corollary 5.8]{MR4637248}
(see also \cite[Definition 4.13]{MR3894860}).
A purely $n$-dimensional rational polyhedral space
$(X,\mathcal{O}_X^{\times})$
\emph{admits a fundamental class} if
the constant function on 
$X$ with value $1$ forms
an $n$-dimensional tropical cycle
$1_{X_{\mathrm{reg}}}$
\cite[\textsection 6.1]{MR4637248},
and the image
$[X]\deq \opn{cyc}_X(1_{X_{\mathrm{reg}}})
\in H_{n,n}^{\mathrm{BM}}(X;\mathbb{Z})$ 
is called the \emph{fundamental class} of $X$
(see also \cite[Definition 4.8]{MR3894860}).
For simplicity, we sometimes write $X$ as
$1_{X_{\mathrm{reg}}}$.
From definition,
$H_{n,n}^{\mathrm{BM}}
(X;\mathbb{Z})\deq \opn{Hom}_{D^{b}(\mathbb{Z}_X)}
(\Omega_{X}^{n}[n],\upomega_{X}^{\bullet})$, so
the multiplicative structure of $\Omega_X^{\bullet}$
and the fundamental class of $X$ induces
the following morphism:
\begin{align}
\eta_{p}^{X}\colon \Omega_X^{n-p}[n]\otimes^{L}_{\mathbb{Z}_X}
\Omega_X^{p} \to \Omega_X^{n}[n] \xto{[X]}
\upomega_X^{\bullet}. 	
\end{align}
The tensor-hom adjunction of $\eta_{p}^{X}$ is
the following \cite[p.627]{MR4637248}:
\begin{align}
\delta_p^{X}\colon \Omega_X^{n-p}[n]\to 
\mathcal{D}_{\mathbb{Z}_X}(\Omega_X^{p})
\deq R\mathcal{H}om_{\mathbb{Z}_X}(\Omega_X^{p},
\upomega_X^{\bullet}).
\end{align}
Of course, $\delta_0^{X}=[X]$.
Besides, the natural isomorphism from

\noindent
$\opn{Hom}_{D^{b}(\mathbb{Z}_X)}(- 
\otimes^{L}_{\mathbb{Z}_X} \Omega_X^{p}[q],
\upomega_X^{\bullet})$ to
$\opn{Hom}_{D^{b}(\mathbb{Z}_X)}(-,
\mathcal{D}_{\mathbb{Z}_X}(\Omega_X^{p}[q]))$
gives the following commutative diagram
for any 
$\alpha \colon \mathbb{Z}_X\to \Omega_X^{n-p}[n-q]$:
\[\begin{tikzcd}
{\opn{Hom}_{D^{b}(\mathbb{Z}_X)}(\Omega_X^{n-p}[n-q]
\otimes^{L}_{\mathbb{Z}_X} \Omega_X^{p}[q]
,\upomega_X^{\bullet})} & 
{\opn{Hom}_{D^{b}(\mathbb{Z}_X)}
(\Omega_X^{n-p}[n-q],\mathcal{D}_{\mathbb{Z}_X}(\Omega_X^{p}[q]))} \\
{\opn{Hom}_{D^{b}(\mathbb{Z}_X)}(\Omega^{p}[q],
\upomega_X^{\bullet})} & {\opn{Hom}_{D^{b}(\mathbb{Z}_X)}(\mathbb{Z}_X,\mathcal{D}_{\mathbb{Z}_X}(\Omega^{p}[q]))}.
	\arrow[from=1-2, to=2-2]
	\arrow[from=2-1, to=2-2]
	\arrow[from=1-1, to=1-2]
	\arrow[from=1-1, to=2-1]
\end{tikzcd}\]
The morphism of each row of
this commutative diagram is an isomorphism.
In particular,
the image of $\eta^X_{p}$ by the homomorphism
on the left column of the commutative diagram
is the cap product $\alpha\frown [X]$
of $\alpha$ and $[X]$ \cite[\textsection 4.6]{MR4637248}.
(For the sake of visibility, we have reversed 
the action as in \cite[\textsection 4.6]{MR4637248}.)
Therefore, we can identify
the homomorphism
\begin{align}
\mathbb{H}^{-q}(\delta_p^{X})
\colon H^{n-p,n-q}(X;\mathbb{Z})
\to \mathbb{H}^{-q}(X;\mathcal{D}_{\mathbb{Z}_X}
(\Omega_X^{p})) \notag
\end{align}
with the cap product homomorphism
\begin{align}
\label{equation-PD-map}
\cdot \frown [X]
\colon H^{n-p,n-q}(X;\mathbb{Z})
\to
H^{\mathrm{BM}}_{p,q}(X;\mathbb{Z}).
\end{align}

In particular, if $X$ is compact,
then the unique morphism 
$a_X\colon X\to \{\mathrm{pt}\}$
to the one-point space $\{\mathrm{pt}\}$
defines the trace map
$\int_X c\deq a_{X*}(c\frown [X])\in 
H_{0,0}^{\mathrm{BM}}(\{\mathrm{pt}\};\mathbb{Z})
\simeq \mathbb{Z}$ for
$c\in H^{\bullet,\bullet}(X;\mathbb{Z})$.
An $n$-dimensional rational polyhedral space
$(X,\mathcal{O}_X^{\times})$
admitting with a fundamental class satisfies
\emph{Poincar\'e--Verdier duality} if 
$\delta_{k}^{X}$ is an isomorphism for all
$k\in \mathbb{Z}_{\geq 0}$
\cite[Definition 6.4]{MR4637248}.
If $(X,\mathcal{O}_X^{\times})$ satisfies 
Poincar\'e--Verdier duality, then
\cref{equation-PD-map} is an isomorphism
\cite[Coroolary 6.9]{MR4637248}.
We can see every tropical manifold
satisfies the Poincar\'e--Verdier duality
from \cite[Proposition 5.5]{MR3894860} and
\cite[Theorem 6.7]{MR4637248}.
The integer valued Poincar\'e duality for tropical manifolds
admitting a global face structure was firstly proved in
\cite[Theorem 5.3]{MR3894860}.
In recent work,
considerations have also been made using
the Poincar\'e--Verdier duality
and the local Poincar\'e duality for rational polyhedral
spaces as indicators of smoothness
(e.g. \cite{MR4626316,amini2021homology,MR4637248}). 

For simplicity, we will use the following notation.
\begin{notation}
Let $(X,\mathcal{O}_X^{\times})$ be
a rational polyhedral space admitting
a fundamental class $[X]$ and satisfying
the Poincar\'e--Verdier duality.
We write $\opn{PD}_X$ for the inverse of
the cap product homomorphism $\cdot \frown [X]$,
and $\PD{Z}\deq \opn{PD}_X(Z)$ for all 
$Z\in H^{\mathrm{BM}}_{p,q}(X;\mathbb{Z})$.
\end{notation}

\subsection{Tropical Cartier divisors}
Let 
$(X,\mathcal{O}_X^{\times})$ be a rational polyhedral space
and $\opn{PAff}_{\mathbb{Z},X}$ the sheaf
of piecewise integral affine linear functions
on $X$
(see \cite[Definition 4.1]{MR3894860}
or \cite[Definition 3.8 and Remark
3.9]{MR4637248}).
From definition, there exist the following
exact sequence of sheaves:
\begin{align}
\label{equation-divisor-exact}
0 \to  \mathcal{O}_X^{\times} 
\to \opn{PAff}_{\mathbb{Z},X} \to 
\opn{PAff}_{\mathbb{Z},X}/\mathcal{O}_X^{\times}
\to 0.
\end{align}
Besides, we use the notations below:
\begin{align}
\opn{Div}(X)^{[0]}\deq H^{0}(X;
\opn{PAff}_{\mathbb{Z},X}/\mathcal{O}_X^{\times}), 
\quad \mathcal{D}iv_X^{[0]}\deq
\opn{PAff}_{\mathbb{Z},X}/\mathcal{O}_X^{\times}.
\end{align}
The connecting homomorphism 
$\delta \colon \opn{Div}(X)^{[0]}\to 
H^{1}(X;\mathcal{O}_X^{\times})$
induced from \eqref{equation-divisor-exact}
is surjective \cite[Proposition 4.6]{MR3894860}.
From now on, for a given $D\in \opn{Div}(X)^{[0]}$,
let $\mathcal{L}(D)$ be the associated
line bundle of $D$
(see \cite[\textsection 3.5]{MR4637248} and 
\cite[\textsection 4.3]{MR2457739}).

\begin{remark}
The definition of rational functions on tropical
manifolds varies depending on authors.
Therefore, the definition of tropical
Cartier divisors
on tropical manifolds also varies.
In \cite{MR3894860,MR4637248},
the previously mentioned group
$\opn{Div}(X)^{[0]}$ is called
the group of tropical Cartier divisors on $X$,
but this is different from the meaning
in \cite{demedrano2023chern}.
By \cite[Proposition 3.27]{shaw2015tropical},
every tropical cycle of codimension $1$ on a tropical manifold
is a tropical Cartier divisor in the sense of
\cite{shaw2015tropical,demedrano2023chern}.
In contrast, there exist codimension $1$
tropical cycle which does not come from any
elements in $\opn{Div}(X)^{[0]}$.
For example,
the point $\{-\infty\}$ in $\mathbb{T}$ does not
come from $\opn{Div}(\mathbb{T})^{[0]}$.

On the other hand, there exist advantages of
the definition of tropical Cartier divisors in the
sense of \cite{MR3894860,MR4637248}.
One of them is that 
every morphism $f\colon X\to Y$ of rational polyhedral
spaces always induces the pullback
$f^{*}\colon \opn{Div}(Y)^{[0]} \to \opn{Div}(X)^{[0]}$,
and the pullback is compatible with that of 
Picard groups $f^{*}\colon \opn{Pic}(Y) \to \opn{Pic}(X)$
\cite[Propoisition 3.15]{MR4637248}.
\end{remark}

Moreover,
there exists a natural pairing
\cite[\textsection 3.4]{MR4637248}
(cf. \cite[Definition 6.5]{MR2591823}):
\begin{align}
\label{equation-divisor-pairing}
\opn{Div}(X)^{[0]}\times Z_{k}(X)\to Z_{k-1}(X);
(D,A) \mapsto D\cdot A.
\end{align}

In particular, the following equation holds
\cite[Proposition 5.12]{MR4637248}:
\begin{align}
\opn{cyc}_X(D\cdot A)=c_1(\mathcal{L}(D))
\frown \opn{cyc}_X(A).
\end{align}

\begin{notation}
Let $A$ be a tropical $k$-cycle on a rational polyhedral space
$(X,\mathcal{O}_X^{\times})$.
We write $[A]\deq \opn{cyc}_X(A)$. We also write
$[A]_{\mathrm{PD}}\deq \opn{PD}_X(\opn{cyc}_X(A))$
when $(X,\mathcal{O}_X^{\times})$ satisfies
the Poincar\'e duality.
\end{notation}

When $(X,\mathcal{O}_X^{\times})$ is 
a purely $n$-dimensional rational polyhedral space
which admits a fundamental class, there exists 
the following homomorphism 
\cite[Definition 4.14]{MR3894860}:
\begin{align}
\label{equation-divisor-map}
\opn{div}_X\colon
\opn{Div}(X)^{[0]}\to Z_{n-1}(X); D\mapsto D\cdot X.  
\end{align}
See also \cite[Theorem 4.15]{MR3894860}.

Let $A$ be a tropical $k$-cycle on $X$
, $|A|$ the support of $A$
\cite[Definition 3.5]{MR4637248},
and $i\colon |A|\to X$
the inclusion of $|A|$.
We note that we can consider $A$ as an
element of $Z_k(|A|)$ and 
the trivial projection
formula holds from definition:
\begin{align}
\label{equation-projection-formula}
i_*(D|_{|A|}\cdot A)=D\cdot A.
\end{align}

\begin{proposition}
\label{proposition-divisor-map-injective}
If $X$ is a purely $n$-dimensional
tropical manifold,
then the homomorphism \eqref{equation-divisor-map}
is injective.
\end{proposition}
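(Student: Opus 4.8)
The plan is to realize $\opn{div}_X$ as the map induced on global sections by a morphism of sheaves and to test injectivity stalkwise. The divisor pairing \eqref{equation-divisor-pairing} is defined locally: the weight of $D\cdot X$ along a codimension-$1$ stratum depends only on the germ of $D$ there and on the fundamental cycle $1_{X_{\mathrm{reg}}}$. Hence it commutes with restriction to opens, so $(D\cdot X)|_U=(D|_U)\cdot 1_{U_{\mathrm{reg}}}$ for every open $U\subset X$. Consequently $D\mapsto D\cdot X$ is the global-sections map of a morphism of sheaves
\[
\Phi\colon \mathcal{D}iv_X^{[0]}\to \mathscr{Z}_{n-1}^{X},
\]
under the identifications $\opn{Div}(X)^{[0]}=H^{0}(X;\mathcal{D}iv_X^{[0]})$ and $Z_{n-1}(X)=H^{0}(X;\mathscr{Z}_{n-1}^{X})$. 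Since $H^{0}$ is left exact, it suffices to prove that $\Phi$ is injective, and injectivity of a morphism of sheaves may be checked on stalks.

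Fix $x\in X$. As $X$ is a tropical manifold it is regular at infinity, so a neighborhood of $x$ is isomorphic to an open subset of $\opn{LC}_x X\times \mathbb{T}^{m_x}$ with $\opn{LC}_x X\simeq L_M$ for a loopless matroid $M$. Computing the stalk of $\Phi$ at $x$ therefore reduces to the local claim: on $L_M\times \mathbb{T}^{m_x}$, a representative piecewise integral affine function $\varphi$ whose corner locus (i.e.\ associated Weil cycle) vanishes identically is itself integral affine, so that its class in $\mathcal{D}iv_X^{[0]}$ is $0$. The bounded ($\mathbb{T}$) directions carry no interior bending, so by the product structure it is enough to treat the sedentarity-$0$ model, namely the matroid fan $L_M$ itself.

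Thus the heart of the matter is injectivity of $\varphi\mapsto \opn{div}(\varphi)$ for the matroid fan $L_M$, and this is the step I expect to be the main obstacle. Assume $\varphi$ is conewise integral linear on $L_M$ with $\opn{div}(\varphi)=0$. Along the relative interior of a codimension-$1$ cone $\tau$ the fan splits as $\tau\times F$, where the transversal $F$ is the one-dimensional Bergman fan of a loopless rank-$2$ matroid; after simplification $F$ is the fan of some $U_{2,k}$, with rays $v_1,\dots,v_k$ of weight $1$ whose unique linear relation is the balancing relation $\sum_i v_i=0$. Writing $a_i$ for the outgoing slope of $\varphi$ along $v_i$, the vanishing of the weight of $\opn{div}(\varphi)$ at $\tau$ reads $\sum_i a_i=0$; since this is exactly the compatibility condition for the $a_i$ to be the values of a single linear functional on $\opn{span}(F)$, the linear pieces of $\varphi$ on the facets adjacent to $\tau$ are restrictions of one common integral functional on $\opn{span}(\tau\cup F)$.

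Finally I would globalize by propagation. Matroid fans are connected through codimension $1$, so any maximal cone is joined to a fixed base cone $\sigma_0$ by a chain of facets meeting along ridges; by the previous paragraph the integral functional representing $\varphi$ does not change as one crosses a ridge, whence $\varphi$ coincides on all of $L_M$ with the functional it realizes on $\sigma_0$. This proves the local claim, hence stalkwise injectivity of $\Phi$, hence injectivity of $\opn{div}_X$. The delicate point — the obstacle flagged above — is the transversal analysis: one must identify the link of a ridge in $L_M$ with a rank-$2$ matroid fan and verify that its single balancing relation is the only linear relation among its rays, so that the numerical identity $\opn{div}(\varphi)=0$ genuinely upgrades to the algebraic statement that $\varphi$ is integral linear. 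This is precisely where the local matroidal smoothness of $X$ enters in an essential way.
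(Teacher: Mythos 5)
Your proposal is correct in substance and begins with the same two reductions as the paper: you realize $\opn{div}_X$ as a morphism of sheaves $\mathcal{D}iv_X^{[0]}\to\mathscr{Z}_{n-1}^{X}$, so that injectivity can be tested locally, and you pass to the local models, open subsets of $\opn{LC}_x X\times\mathbb{T}^{m_x}$, where the $\mathbb{T}$-directions carry no bending and can be stripped away. Where you genuinely diverge is the decisive last step. The paper disposes of the resulting boundaryless (matroid-fan) case by citing \cite[Theorem 4.5]{MR4246795}, whereas you prove it directly: vanishing of the corner-locus weight along a ridge forces the adjacent linear pieces to glue, because the transversal fan of a ridge in $L_M$ is a rank-$2$ Bergman fan whose rays admit, up to scalar, only the balancing relation $\sum_i v_i=0$; connectedness of matroid fans through codimension $1$ then propagates a single integral affine functional across all facets. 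Your route is self-contained and makes visible exactly where local matroidality enters (through the structure of links of ridges), at the cost of re-proving a statement the paper can simply quote; the paper's version is shorter and inherits the stronger boundaryless statement from the literature.

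Two points need repair, both standard. First, injectivity of a sheaf morphism is checked on stalks, so you must work with germs, and a germ of a section of $\mathcal{D}iv_X^{[0]}$ is represented by a function that is piecewise integral affine with respect to \emph{some} polyhedral subdivision, not necessarily conewise linear on the Bergman fan structure, which is what you assume. The fix is to pass to a common refinement of that subdivision with the fan structure: your transversal analysis applies verbatim to the refinement, the only new case being ridges interior to maximal cones, where the transversal is a line (the case $U_{2,2}$) and vanishing of the bending weight immediately gives linearity across the wall; connectedness through codimension $1$ survives refinement since one only removes further codimension-$2$ polyhedra. Second, for the propagation step to prove \emph{stalkwise} injectivity you should run it on the star of the base point (a small neighborhood), not on all of $L_M$; this is harmless because stars of cones of Bergman fans are again products of a lineality space with a Bergman fan, hence connected through codimension $1$. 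With these adjustments your argument is a complete and valid alternative to the paper's citation.
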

\begin{proof}
The divisor map for each open subset of $X$
naturally induces
a sheaf homomorphism
\begin{align}
\opn{div}_X\colon \mathcal{D}iv_X^{[0]}\to 
\mathscr{Z}_{n-1}^{X}.
\end{align}
Therefore, it is enough to check the homomorphism
of sheaves is injective.
Let $\mathcal{B}$ be an open basis of $X$.
Then, the category of $\mathcal{B}$-sheaves
(\cite[p.49-50]{MR2675155})
is equivalent to the category of sheaves on
$X$, so we only need to check 
$\opn{div}_{U}\colon \mathcal{D}iv_X^{[0]}(U)\to Z_{n-1}(U)$
for any $U\in \mathcal{B}$.
Since $X$ is a tropical manifold, we may assume
$U$ is isomorphic to an open subset
of $\opn{LC}_x X\times \mathbb{T}^{n}$
for some $x\in X$.
Moreover, we may assume every piecewise
integer affine linear function on $U$ is the pullback
of a piecewise
integer affine linear function on $\opn{LC}_x X$ 
by the projection $\opn{LC}_x X\times \mathbb{T}^{n}\to \opn{LC}_x X$
locally. If $X$ is boundaryless, then
a generalization of \cref{proposition-divisor-map-injective}
holds
\cite[Theorem 4.5]{MR4246795}.
Therefore, it has been proved.
\end{proof}

\begin{notation}
\label{notation-power-divisor}
From now on, we identify
$\opn{Div}(X)^{[0]}$ with 
$\opn{div}_X(\opn{Div}(X)^{[0]})$
when $X$ is a tropical manifold.
Under this identification,
we can consider every sedentarity-0 tropical
submanifold as an element of $\opn{Div}(X)^{[0]}$.
In particular, from \cref{equation-divisor-pairing},
we can define the \emph{$k$-th power}
$D^{k}$ of $D\in \opn{Div}(X)^{[0]}$ as an element
of $Z_{n-k}(X)$ and satisfies
\begin{align}
\PD{D^{k}}=\PD{D}^{k}=c_1(\mathcal{L}(D))^{k}.
\end{align}
For simplicity, $D^{0}\deq X$.
Additionally, we suppose $D$ is a tropical submanifold on $X$
and the pullback $D|_{D}$ of $D$ is a tropical submanifold on $D$.
For the embedding morphism $\iota_{D}\colon D\to X$,
we obtain the following equation:
\begin{align}
\iota_{|D|*}(D|_{|D|})=D\cdot D.
\end{align}
We can identify the support $|D|_{|D|}|$ with $|D^2|$ 
within the scope where confusion does not arise.
Let $D^{(1)}\deq D$ and $D^{(i+1)}\deq 
D^{(i)}|_{|D^{(i)}|}=D|_{|D^{(i)}|}$ for all
$i\in \mathbb{Z}_{>0}$. 
Moreover, if $D^{(k)}$ $(k=1,\ldots,m)$ is
tropical submanifolds of $|D^{k-1}|$
and $|D^{k}|=|D^{(k)}|$, then
\begin{align}
\iota_{|D^{(k)}|*}(D^{(k+1)})=
\iota_{|D^{(k)}|*}(D^{(k)}|_{|D^{(k)}|})
=D\cdot D^{k}=D^{k+1}.
\end{align}
Thus, we can identify 
$|D^{k+1}|$ with $|D^{(k+1)}|$
within the scope where confusion does not arise.
\end{notation}

\subsection{Chern classes and Todd classes of
tropical manifolds}
In this subsection, we start to discuss
\cref{conjecture-rr-bertini} and give a generalization
of \cref{theorem-rr-euler-surface}.

\begin{notation}
Let $X$ be a purely $n$-dimensional tropical manifold
and $\opn{csm}_{k}(X)$ the $k$-th
Chern--Schwartz--MacPherson cycle of $X$
\cite[Definition 3.4]{demedrano2023chern}.
We note that we can define
the $k$-th Chern--Schwartz--MacPherson
cycle of $\opn{csm}_{k}(X)$ by a sheaf
theoretical approach  
from \cite[Proposition 3.11 and Lemma 3.12]{demedrano2023chern}
and \cite[Lemma 4.13]{MR4637248}
(see also \cite[Lemma 5.4]{shaw2022birational}).
Then, we write the $k$-th Chern class of $X$ as
\begin{align}
c_{k}^{\mathrm{sm}}(X)\deq
\opn{PD}_X\circ \opn{cyc}_{n-k}(\opn{csm}_{n-k}(X))
\in H^{k,k}(X;\mathbb{Z}).
\end{align}
This notation $c_{k}^{\mathrm{sm}}(X)$
is used to avoid confusion with the Chern classes of divisors.
As like this, we write the total Chern class of $X$ as
$c^{\mathrm{sm}}(X)\deq\sum_{k=0}^{n} c_{k}^{\mathrm{sm}}(X)$.
For a given $L\in H^{1,1}(X;\mathbb{Z})$, we write
the \emph{Chern character} of $L$ as follows:
\begin{align}
\opn{ch}(L)\deq \opn{exp}(L)
\deq \sum_{i=0}^{\infty}\frac{L^{i}}{i!}\in 
H^{\bullet,\bullet}(X;\mathbb{ Q}).
\end{align}
\end{notation}

In order to discuss the Todd classes of
tropical manifolds, we recall fundamental properties
of Chern classes of complex manifolds from
\cite{MR1335917,MR1644323,MR2810322}.
Let $M$ be a complex manifold and 
$D$ a nonsingular
divisor on $D$. Let $\mathcal{T}_M$ be the tangent
bundle of $M$.
For an embedding $\iota\colon D\to M$ of
$D$, we have the following
exact sequence: 
\begin{align}
\label{equation-total-adjunction-exact}
0 \to \mathcal{T}_{D}\to \iota^{*}\mathcal{T}_M
\to \iota^{*}\mathcal{O}_M(D)\to 0.
\end{align}
From the axioms of Chern classes,
\eqref{equation-total-adjunction-exact} gives 
the adjunction formula for total Chern classes:
\begin{align}
\label{equation-classical-total-adjunction}
\iota^{*}c(\mathcal{T}_M)
=c(\mathcal{T}_{D})c(\iota^{*}\mathcal{O}_M(D)).
\end{align}
Since $c(\mathcal{O}_M(D))=1+c_1(\mathcal{O}_M(D))$, 
the $k$-th part of \eqref{equation-classical-total-adjunction}
is
\begin{align}
\label{equation-classical-total-adjunction-2}
\iota^{*}c_k(\mathcal{T}_M)
=c_{k}(\mathcal{T}_{D})+
c_{k-1}(\mathcal{T}_{D})c_1(\iota^{*}\mathcal{O}_M(D)).
\end{align}
We can discuss an analog of 
\eqref{equation-classical-total-adjunction},
and expect the following conjecture holds:
\begin{conjecture}
\label{conjecture-grr-divisor}
Let $X$ be a pure dimensional compact tropical manifold
and $D$ a tropical submanifold of $X$ of codimension $1$.
Then, the following equations hold for 
$k\in \mathbb{Z}_{\geq 0}$ and the inclusion 
$\iota\colon D \to X$:
\begin{align}
\label{equation-total-adjunction}
\iota^{*}c^{\mathrm{sm}}(X)&=c^{\mathrm{sm}}(D)
(1+\iota^{*}\PD{D}), \\ 
\iota^{*}c^{\mathrm{sm}}_k(X)&=c^{\mathrm{sm}}_k(D)+
c^{\mathrm{sm}}_{k-1}(D)\iota^{*}\PD{D} \,
(\in H^{k,k}(D;\mathbb{Z})).
\end{align}
\end{conjecture}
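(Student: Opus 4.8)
The plan is to imitate the classical derivation of the adjunction formula, isolate the one place where it breaks tropically, and replace that place with a combinatorial computation. Classically \eqref{equation-classical-total-adjunction} comes from the short exact sequence \eqref{equation-total-adjunction-exact} of tangent bundles together with the Whitney sum formula; tropically there is no such sequence because $c^{\mathrm{sm}}$ is not built from a tangent bundle. The two extreme degrees are, however, already under control: the degree-$0$ part of \eqref{equation-total-adjunction} is the tautology $1=1$, and its degree-$1$ part is exactly the tropical adjunction formula for the canonical class \cite[Theorem 5.2]{demedrano2023chern}, which I may assume. Thus the entire content of \cref{conjecture-grr-divisor} lies in the degrees $k\geq 2$, which is also why the surface case of \cref{theorem-rr-euler-surface} sees nothing beyond $k=1$: on a curve $D$ both $c_2^{\mathrm{sm}}(D)$ and $c_1^{\mathrm{sm}}(D)\,\iota^{*}\PD{D}$ vanish for dimension reasons.

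First I would reduce to a statement about matroidal fans. Since $c_k^{\mathrm{sm}}(X)$ is the Poincar\'e dual of the CSM cycle $\opn{csm}_{n-k}(X)$, and $\opn{csm}$ is defined locally through the matroid structure of the local cones (as in the sheaf-theoretic description of \cite[Proposition 3.11 and Lemma 3.12]{demedrano2023chern}), both sides of \eqref{equation-total-adjunction} are determined by the local inclusions $\opn{LC}_x D\subset \opn{LC}_x X$. Because $D$ is a codimension-$1$ tropical submanifold, each such inclusion is a locally matroidal inclusion $L_M\subset L_N$ of tropical linear spaces with a common ground set, and the problem becomes an identity of cycles on $L_M$ relating $\opn{csm}(L_N)$, $\opn{csm}(L_M)$, and the restriction of $\PD{D}$, the latter computed through the divisor pairing \eqref{equation-divisor-pairing}.

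For the relatively uniform case (\cref{definition-relatively-uniform}) this local identity becomes explicit, and this is the part I expect to be able to carry out. There the local model is a product $L_M\times L_{U_{r,r+1}}\subset L_M\times L_{U_{r+1,r+1}}$ with $L_{U_{r+1,r+1}}\simeq \mathbb{R}^{r}$. Using a K\"unneth-type multiplicativity of the CSM cycles for matroid direct sums (which follows from the combinatorial definition in \cite{MR3999674}, since $L_{M\oplus N}\simeq L_M\times L_N$) together with $c^{\mathrm{sm}}(\mathbb{R}^{r})=1$, the factor $c^{\mathrm{sm}}(L_M)$ cancels from both sides and the whole adjunction collapses to the single universal identity
\[
c^{\mathrm{sm}}(L_{U_{r,r+1}})\bigl(1+i^{*}\PD{D_0}\bigr)=1
\]
for the standard tropical hyperplane $D_0\deq L_{U_{r,r+1}}\subset \mathbb{R}^{r}$, with $i\colon D_0\to \mathbb{R}^r$ the inclusion. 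This last identity is a finite, explicit computation: the classes $\opn{csm}_k(U_{r,r+1})$ of uniform matroids are known, and the self-intersection data can be read off from the supports $|D_0^{k}|=L_{U_{r-k+1,r+1}}$ exactly as in the computation behind \eqref{equation-relatively-uniform-cone}, so one only needs to check that the two series are reciprocal.

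The hard part, and the reason \cref{conjecture-grr-divisor} stays a conjecture in general, is the passage to arbitrary codimension-$1$ matroidal inclusions $L_M\subset L_N$ that do not split as a product. There the K\"unneth reduction is unavailable, and one is forced to prove the full combinatorial adjunction for CSM cycles of matroids directly, presumably by induction on $k$ via a deletion--contraction recursion for $\opn{csm}$, anchored at the known cases $k=0,1$. The obstruction is precisely the absence of a Whitney sum formula: the higher-degree terms do not follow formally from the $c_1$-level adjunction, so each degree demands genuinely new input, and showing that the CSM recursion is compatible with restriction to the divisor is the crux I would not expect to dispatch routinely.
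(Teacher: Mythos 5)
The first thing to compare against is the paper's own treatment, and here it is essential to note that the paper does \emph{not} prove this statement: it is stated and used as a conjecture. The paper's entire ``proof content'' for it consists of two observations --- that it holds trivially when $\dim X=1$, and that when $\dim X=2$ it is a reformulation of the known adjunction formula \cite[Theorem 5.2]{demedrano2023chern} (this is what is invoked in the proof of \cref{theorem-rr-bertini-surface}). Your proposal is in the same epistemic position: you assume the degree-$0$ and degree-$1$ parts, sketch a strategy for one special case, and explicitly concede the general case. So as a proof of the statement your attempt is incomplete, but no more incomplete than the paper itself; and your structural analysis (the content sits in degrees $k\geq 2$, the higher-degree groups vanish for curves $D$ since $H^{k,k}(D;\mathbb{Z})=0$ for $k\geq 2$ by dimension reasons, and the classical argument fails for lack of a tangent bundle and Whitney sum formula) agrees with the paper's discussion.

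There is, however, a genuine gap in the one piece you claim you could carry out, the relatively uniform case. Your reduction asserts that both sides of \eqref{equation-total-adjunction} are ``determined by the local inclusions $\opn{LC}_x D\subset \opn{LC}_x X$.'' That is true of the underlying tropical \emph{cycles} (CSM weights and divisor intersections are locally determined), but it is false for classes in $H^{k,k}(D;\mathbb{Z})$, which is where the conjecture lives: these are global invariants, and on the local models (contractible fans) the corresponding cohomology groups carry essentially none of the relevant information, so the identity you write, $c^{\mathrm{sm}}(L_{U_{r,r+1}})\bigl(1+i^{*}\PD{D_0}\bigr)=1$, only has content when reformulated as an identity of cycles, namely $\opn{csm}_{r-1-k}(U_{r,r+1})=(-1)^{k}\,L_{U_{r-k,r+1}}$. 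At that level your K\"unneth reduction (multiplicativity of CSM cycles for direct sums, $c^{\mathrm{sm}}(\mathbb{R}^r)=1$) is fine and the universal identity is indeed true. What is missing is the passage back from the local cycle-level identity to the conjectured identity of cohomology classes on $D$. The natural route --- push forward to $X$ and use $\opn{cyc}_X(D\cdot A)=c_1(\mathcal{L}(D))\frown\opn{cyc}_X(A)$ \cite[Proposition 5.12]{MR4637248} --- only recovers the conjecture after applying $\iota_*$ on tropical Borel--Moore homology, and $\iota_*$ is not injective (already for $D$ two points on the tropical elliptic curve $\mathbb{R}/\mathbb{Z}$). Closing this requires a refined compatibility lemma: that $\iota^{*}c_k^{\mathrm{sm}}(X)\frown[D]$ equals the Borel--Moore class on $D$ of the intersection cycle $D\cdot\opn{csm}_{n-k}(X)$ regarded as a cycle on $D$ (it is supported there). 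Neither the paper nor its references supply this in the form you need, and without it neither your relatively uniform case nor your claim that the degree-$1$ part ``is exactly'' \cite[Theorem 5.2]{demedrano2023chern} (a cycle-level statement) actually yields the identity in $H^{k,k}(D;\mathbb{Z})$.
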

We also expect that a version
of \cref{conjecture-grr-divisor} for tropical
Chow groups \cite[Definition 3.30]{shaw2015tropical}
also holds.
When $\dim X=1$, \cref{conjecture-grr-divisor} holds
trivially. When $\dim X=2$,
\cref{conjecture-grr-divisor}
is just another representation of the adjunction formula
of locally degree 1 tropical curves on tropical surfaces
(\cite[Theorem 6]{shaw2015tropical},
\cite[Theorem 5.2]{demedrano2023chern}).
We will see later what 
corollaries appear
when \cref{conjecture-grr-divisor} is true.

From now on, we discuss the Todd classes of
tropical manifolds.
At first, we recall $m$-sequences.
The notion of $m$-sequence is introduced
by Hirzebruch in \cite[\textsection 1]{MR1335917},
but some part of explanation in
\cite[\textsection 19]{MR440554}
by Milnor--Stasheff are more comprehensible,
and thus we also follow from \cite[\textsection 19]{MR440554}.
Let $\opn{Todd}\deq 
(\opn{Todd}_j)_{j\in \mathbb{Z}_{\geq 0}}$ be 
the Todd $m$-sequence \cite[\textsection 1.7]{MR1335917}.
In complex geometry, the $k$-th Chern class
$c_k(M)\deq c_k(\mathcal{T}_M)$ of
a given complex manifold $M$ is defined
as an element of $H^{2k}(M;\mathbb{Z})$.
Let $c(M)\deq \sum_{i=0}^{\infty}c_i(M)$ be
the total Chern class of $M$
in the real valued
even cohomology ring $H^{\mathrm{even}}(X;\mathbb{R})$.
Since $H^{\mathrm{even}}(M;\mathbb{R})$
is commutative and a graded $\mathbb{R}$-algebra
and $c_0(M)=1$,
so the total Chern class of $M$ defines 
the Todd class $\opn{td}(M)\deq \sum_{j=0}^{\infty}
\opn{Todd}_j(c_{1}(M),\ldots,c_{j}(M))$ of $M$
\cite[\textsection 10]{MR1335917}.
 
We can define the Todd class
$\opn{td}(X)$ of a given tropical manifold
$X$ by replacing 
$H^{\mathrm{even}}(M;\mathbb{R})$ with
$\bigoplus_{i=0}^{\infty} H^{i,i}(X;\mathbb{R})$
(\cref{definition-tropical-todd}).
In \cite[Conjecture 6.13]{demedrano2023chern},
the Todd class of tropical manifold is
defined as a tropical cycle.
In this paper, we use the Poincar\'e dual of the image 
of the cycle map of it
in order to get closer to the notation 
in algebraic geometry.

\begin{definition}[{\cite[\textsection 6]{demedrano2023chern}}]
\label{definition-tropical-todd}
Let $X$ be a purely $n$-dimensional tropical manifold.
The $j$-th \emph{Todd class} of $X$ is
a cycle in $H^{j,j}(X;\mathbb{R})$
defined from 
the Todd $m$-sequence 
$\opn{Todd}\deq (\opn{Todd}_j)_{j\in \mathbb{Z}_{\geq 0}}$
with respect to $\bigoplus_{i=0}^{\infty}
H^{i,i}(X;\mathbb{R})$ as follows:
\begin{align}
\opn{td}_j(X)\deq \opn{Todd}_j(c_{1}^{\mathrm{sm}}(X),
\ldots,c_{j}^{\mathrm{sm}}(X)),
\end{align}
\begin{align}
\opn{td}(X)\deq
\opn{Todd}(c^{\mathrm{sm}}(X))\deq
\sum_{j=0}^{\infty}\opn{td}_j(X).
\end{align}

\end{definition}

\begin{definition}[{\cite[\textsection 12.1.(2)]{MR1335917}}]
\label{definition-rr-number}
Let $X$ be a purely $n$-dimensional
compact tropical manifold.
The \emph{Riemann--Roch number}
of $D(\in Z_{n-1}(X))$ is the following number:
\begin{align}
\label{equation-rr-number}
\opn{RR}(X;D)\deq \int_X \opn{ch}(\PD{D})\opn{td}(X).
\end{align}
\end{definition}
If $D\in \opn{Div}(X)^{[0]}$,
then we can write \cref{equation-rr-number}
as \cref{equation-intro-rr}
from \cite[Proposition 5.12]{MR4637248}.

Following the theory of algebraic geometry,
we define the \emph{generalized Gysin map}
associated with proper morphism between
tropical manifolds (see  
\cite[Chapter 13. Appendix]{MR2810322}
for classical cases).
We note that the generalized Gysin map
for tropical manifolds
have already appeared in \cite{amini2020hodge}.
\begin{definition}[{cf. \cite{amini2020hodge}}]
Let $X$ and $Y$ be pure dimensional tropical manifolds
and $Q$ a subring of $\mathbb{R}$.
Let $f\colon X\to Y$ be a proper morphism.
The \emph{generalized Gysin map} of $f$ is 
the morphism $f_!\colon H^{\bullet,\bullet}(X;Q)\to 
H^{\bullet,\bullet}(Y;Q)$
which commutes with the following diagram:
\begin{equation}
\begin{tikzcd}
	{H^{\bullet,\bullet}(X;Q)} & {H^{\bullet,\bullet}(Y;Q)} \\
	{H_{\bullet,\bullet}^{\mathrm{BM}}(X;Q)} & {H_{\bullet,\bullet}^{\mathrm{BM}}(Y;Q)}
	\arrow["{\cdot \frown [X]}"', from=1-1, to=2-1]
	\arrow["{\cdot\frown[Y]}", from=1-2, to=2-2]
	\arrow["{f_*}"', from=2-1, to=2-2]
	\arrow["{f_!}", from=1-1, to=1-2]
\end{tikzcd}.   
\end{equation}

\end{definition}
From definition, there exists the following projection formula.
\begin{proposition}
\label{equation-gysin-projection-formula}
Let $f\colon X\to Y$ be a proper morphism
between pure dimensional
tropical manifolds $X$ and $Y$.
Then, for any $c\in H^{\bullet,\bullet}(Y;Q) $ and
$d\in H^{\bullet,\bullet}(X;Q)$ the following equation holds:
\begin{align}
    f_!(f^{*}(c)\cdot d)=c\cdot f_!(d).
\end{align}
In particular, $f_!(f^{*}(c))=c\cdot f_!(1)=c\cdot \PD{X}$
when $X$ is a rational polyhedral subspace of $Y$ and 
$f$ is the inclusion map of $X$.
\end{proposition}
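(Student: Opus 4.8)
The plan is to deduce the projection formula from the corresponding projection formula for the cap product and proper pushforward, combined with the compatibility between cap product and cup product, exploiting that the two vertical arrows in the defining square are isomorphisms.

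First I would rewrite the definition of $f_{!}$ in a usable form. Since $X$ and $Y$ are tropical manifolds they satisfy Poincar\'e--Verdier duality, so $\cdot \frown [X]$ and $\cdot \frown [Y]$ are isomorphisms and the defining square yields
\[
f_{!}(\alpha)\frown [Y]=f_{*}\bigl(\alpha\frown [X]\bigr)
\]
for every $\alpha\in H^{\bullet,\bullet}(X;Q)$. Taking $\alpha=f^{*}(c)\cdot d$ reduces the claim to an identity among Borel--Moore homology classes on $Y$. Because the cap product is a module action of the cohomology ring on Borel--Moore homology, $(f^{*}(c)\cdot d)\frown [X]=f^{*}(c)\frown\bigl(d\frown [X]\bigr)$; applying the projection formula for $f_{*}$ then gives $f_{*}\bigl(f^{*}(c)\frown(d\frown[X])\bigr)=c\frown f_{*}\bigl(d\frown[X]\bigr)$. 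Using the displayed identity once more, $f_{*}(d\frown[X])=f_{!}(d)\frown[Y]$, so the whole expression equals $c\frown\bigl(f_{!}(d)\frown[Y]\bigr)=\bigl(c\cdot f_{!}(d)\bigr)\frown[Y]$. Since $\cdot\frown[Y]$ is injective this forces $f_{!}(f^{*}(c)\cdot d)=c\cdot f_{!}(d)$.

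For the final assertion I would specialize to $d=1$, obtaining $f_{!}(f^{*}(c))=c\cdot f_{!}(1)$, and then identify $f_{!}(1)$. The displayed identity gives $f_{!}(1)\frown[Y]=f_{*}(1\frown[X])=f_{*}[X]$, and when $f$ is the inclusion of a rational polyhedral subspace the compatibility $f_{*}\circ\opn{cyc}_{X}=\opn{cyc}_{Y}\circ f_{*}$ shows $f_{*}[X]=\opn{cyc}_{Y}(X)$; hence $f_{!}(1)=\opn{PD}_{Y}(\opn{cyc}_{Y}(X))=\PD{X}$.

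The step I expect to be the main obstacle is verifying, inside the sheaf-theoretic framework of \cite{MR4637248}, the two classical inputs used above: the cup--cap compatibility $(a\cdot b)\frown z=a\frown(b\frown z)$ and the projection formula $f_{*}(f^{*}(c)\frown z)=c\frown f_{*}(z)$. Here cap product and proper pushforward are built from the pairing $\eta_{p}^{X}$ and the six-functor formalism on the dualizing complex rather than from chain-level operations, so both identities must be obtained by tracing the relevant adjunctions and checking that $f_{*}$ is compatible with the $\Omega_{\mathbb{Z},X}^{\bullet}$-module structure; granting these, the argument above is purely formal.
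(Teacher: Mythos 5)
Your proof is correct and takes essentially the same approach as the paper's: both reduce the identity via the isomorphism $\cdot\frown[Y]$ and then chain together the cup--cap module identity, the projection formula for the proper pushforward on tropical Borel--Moore homology, and the defining square of $f_!$ used twice (you read the chain from the left-hand side, the paper from the right-hand side). Your explicit verification of the final assertion, identifying $f_!(1)=\PD{X}$ via $f_*\circ\opn{cyc}_X=\opn{cyc}_Y\circ f_*$, is a small addition that the paper leaves implicit.
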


\begin{proof}
Since $\cdot \frown [Y]$ is an isomorphism, we only need
to prove the following:
\begin{align}
    f_!(f^{*}(c)\cdot d)\frown [Y]=(c\cdot f_!(d))\frown [Y].
\end{align}
We can get the projection formula for generalized Gysin
map from that for the pushforward of tropical 
Borel--Moore homologies as follows:
\begin{align}
(c\cdot f_!(d))\frown [Y]&=
c\frown(f_!(d)\frown [Y])=
c\frown f_*(d\frown [X])\\
&=f_*(f^*(c)\frown (d\frown [X]))
=f_*((f^*(c)\cdot d) \frown [X]) \\
&=f_!(f^{*}(c)\cdot d)\frown [Y].
\end{align}
\end{proof}

\begin{example}
\label{example-grr-1}
Let $X$ be a purely $n$-dimensional tropical manifold
and $D$ a tropical submanifold of codimension $1$ on 
$X$. If \cref{conjecture-grr-divisor} is true, then
the generalized Gysin map of the inclusion map
$\iota\colon D\to X$ induces
\begin{align}
c^{\mathrm{sm}}(X)\cdot \PD{D}=\iota_!c^{\mathrm{sm}}(D)
(1+\PD{D}).
\end{align}
The first degree part of the equation above is
\begin{align}
\label{equation-adjunction-dmrs}
c^{\mathrm{sm}}_{1}(X)\cdot \PD{D}=\iota_!c^{\mathrm{sm}}_1(D)
+\PD{D}\cdot \PD{D}.
\end{align}
We may consider \cref{equation-adjunction-dmrs} as 
the Poincar\'e dual of the adjunction formula proved in
\cite[Theorem 5.2]{demedrano2023chern}.
By using the canonical divisor of tropical manifolds,
\eqref{equation-adjunction-dmrs} can be rewritten as follows:
\begin{align}
\iota_! \PD{K_D}=(\PD{K_X}+\PD{D})\PD{D}.
\end{align}
If $\dim X=2$, then this equation also gives
the adjunction formula of tropical surfaces which is proved
in \cite[Theorem 4.11]{shaw2015tropical}
since $H^{2,2}(X;\mathbb{Z})\simeq \mathbb{Z}$.
\end{example}

\begin{example}[{cf. \cite[Chapter 13 Appendix]{MR2810322}}]
\label{example-sum-formula}
We retain the notation
in \cref{example-grr-1}.
We also assume \cref{conjecture-grr-divisor} is true again.
From \eqref{equation-total-adjunction}
and the property of multiplicative sequences, 
\begin{align}
\label{equation-todd-adjunction-1}
\iota^{*}\opn{td}(X)=
\opn{td}(D)\frac{\iota^{*}\PD{D}}
{1-\opn{exp}(-\iota^{*}\PD{D})}.
\end{align}
From \cref{equation-todd-adjunction-1} we also get
\begin{align}
\opn{td}(D)
=\iota^{*}\left(
\frac{1-\opn{exp}(-\PD{D})}{
\PD{D}}\opn{td}(X)\right).
\end{align}
Moreover, the projection formula for 
$\iota$ gives
\begin{align}
\label{equation-grr-divisor-2}
\iota_!(\opn{td}(D))=(1-\opn{ch}(-\PD{D}))\opn{td}(X).
\end{align}
The equation \eqref{equation-grr-divisor-2} 
is an analog of 
the Grothendieck--Riemann--Roch theorem
for a special case.

Let $D'$ be a tropical $(n-1)$-cycle on $X$.
Then, the projection formula for 
the embedding $\iota\colon D\to X$ gives
\begin{align}
\iota_!(\opn{ch}(\iota^{*}(\PD{D'}))\opn{td}(D))
&=\opn{ch}(\PD{D'})\iota_!(\opn{td}(D)) \notag \\
&=(\opn{ch}(\PD{D'})-\opn{ch}(\PD{D'-D}))\opn{td}(X).
\end{align}
If $D'$ is in $\opn{Div}^{[0]}(X)$, then
the pullback $\iota^{*}D'=D'|_{D}$ of $D'$ satisfies
the following:
\begin{align}
\label{equation-rr-number-divisor}
\opn{RR}(X;D'-D)=\opn{RR}(X;D')-
\opn{RR}(D;D'|_{D}).
\end{align}

Similarity to the case of algebraic varieties,
we can define the
\emph{virtual T-genus} of $D_1,\ldots,D_m\in Z_{n-1}(X)$
as follows \cite[\textsection 11.2]{MR1335917}:
\begin{align}
T(X)\deq \opn{RR}(X;0),\quad  T(X;D_1,\ldots,D_m)\deq
\int_X\prod_{j=1}^{m}(1-\opn{exp}(-\PD{D_j}))\opn{td}(X).
\end{align}
We can see $T(X;D)=\opn{RR}(D;0)$ when
\cref{conjecture-grr-divisor} is true.

As explained in \cite[\textsection 20.6]{MR1335917},
the following equation holds
\begin{align}
\opn{exp}(\PD{D_1})=(1-(1-\opn{exp}(-\PD{D_1})))^{-1}
=\sum_{j=0}^{n}(1-\opn{exp}(-\PD{D_1}))^j.
\end{align}

From this, we obtain the following equation
\cite[\textsection 20.6.(14)]{MR1335917}:
\begin{align}
\opn{RR}(X;D_1)=\sum_{j=0}^{n}T(X;\overbrace{D_1,\ldots,D_1}^{j}).
\end{align}

Additionally, we assume every $D_i$ ($i=1,\ldots,m$) is
in $\opn{Div}^{[0]}(X)$ and $D_1=D$.
Then, from 
\cref{equation-rr-number-divisor} and
\cref{equation-gysin-projection-formula},
we also obtain the following equations \cite[Theorem 11.2.1]{MR1335917}:
\begin{align}
T(X;D_1,\ldots,D_m)&=T(D_1;D_2|_{D_1},\ldots,D_m|_{D_1}), \\
\sum_{j=1}^{n}T(X;\overbrace{D,\ldots,D}^{j})
&=\opn{RR}(D;0)+\sum_{j=1}^{n-1}T(D;\overbrace{D|_D,\ldots,D|_{D}}^{j})
=\opn{RR}(X;D|_D).
\label{equation-rr-reduction}
\end{align}
If $D|_D$ is also a tropical submanifold,
then we can repeat this process.
\end{example}

\begin{remark}
\label{remark-grothendieck-group}
In this remark, we see another explanation
of an algebraic geometrical meaning of the first
equation of \cref{conjecture-rr-euler}.
Let $a_X\colon X\to \opn{Spec}k$
be a complete nonsingular algebraic variety over $k$
and $K_0(X)$ the Grothendieck ring of $X$
(see, e.g., \cite[\textsection 15.1]{MR1644323}).
For a coherent sheaf $\mathcal{F}$ on $X$, 
let $[\mathcal{F}]$ be the isomorphism class of
$\mathcal{F}$ in $K_0(X)$.
The addition of $K_0(X)$ is given from
the direct sum of coherent sheaves and
the multiplication of $K_0(X)$ is given
from the derived tensor product of them.   
The unit of $K_0(X)$ is $[\mathcal{O}_X]$
where $\mathcal{O}_X$ is the structure sheaf of $X$.
Since $a_X$ is a proper morphism, so $a_X$
induces the pushforward group homomorphism
$a_{X*}\colon K_0(X)\to K_0(\opn{Spec}k)$,
and $a_{X*}([\mathcal{F}])=
\chi(X;\mathcal{F})$.
For simplicity, we write $\chi\deq a_{X*}$.
Let $D$ be a nonsingular divisor on $X$ and 
$\iota\colon D\to X$ the embedding morphism.
Then, the following equations hold:
\begin{align}
[\mathcal{O}_X(-D)]=
[\mathcal{O}_X]-[\iota_*\mathcal{O}_{D}], \quad
[\mathcal{O}_X(D)]=
([\mathcal{O}_X]-[\iota_*\mathcal{O}_{D}])^{-1}
=\sum_{k=0}^{\infty}[\iota_*\mathcal{O}_D]^{k}, \\
\chi(X;\mathcal{O}_X(-D))=
\chi([\mathcal{O}_X]-[\iota_*\mathcal{O}_{D}]), \quad
\chi(X;\mathcal{O}_X(D))=
\sum_{k=0}^{\infty}\chi([\iota_*\mathcal{O}_D]^{k}).
\label{equation-power-divisor}
\end{align}
This means that \eqref{equation-power-divisor} is 
the algebraic geometric counterpart of
the first equation in
\cref{conjecture-rr-c-euler,conjecture-rr-euler}.
\end{remark}

The following proposition demonstrates
the relationships among the various conjectures
presented in this paper.

\begin{proposition}
If \label{proposition-euler-to-bertini}
\cite[Conjecture 6.13]{demedrano2023chern},
\cref{conjecture-grr-divisor,conjecture-rr-euler}
are true, then \cref{conjecture-rr-bertini} is also true. 
\end{proposition}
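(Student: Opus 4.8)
The plan is to reduce \cref{conjecture-rr-bertini} entirely to the splitting formula \eqref{equation-rr-number-divisor} of \cref{example-sum-formula} together with \cref{conjecture-rr-euler}, so that no genuinely new geometric input is needed. First I would record that condition~(1) makes $D'$ moderate, hence sedentarity-$0$ (a codimension-$1$ tropical submanifold in moderate position is sedentarity-$0$ by the proposition above), so $D'\in\opn{Div}(X)^{[0]}$; and that condition~(2) makes $D'|_{D}=\iota^{*}D'$ a codimension-$1$ tropical submanifold of $D$. Since $X$ is compact and every tropical submanifold is closed, $D$ is itself a compact tropical manifold, so all the Riemann--Roch numbers below are defined. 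Under \cref{conjecture-grr-divisor} the formula \eqref{equation-rr-number-divisor} then gives
\begin{align}
\opn{RR}(X;D'-D)=\opn{RR}(X;D')-\opn{RR}(D;D'|_{D}).
\end{align}

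Next I would apply \cref{conjecture-rr-euler} to each summand. By condition~(1), $D'$ is in moderate position on $X$, so the first term equals $\sum_{k}\chi(|D'^{k}|)=\chi(X\setminus D')$. For the second term, conditions~(2) and~(3) say that the support of $D'|_{D}$ is $D'\cap D$ and that it is in moderate position on $D$; this is precisely the hypothesis needed to apply \cref{conjecture-rr-euler} on the ambient manifold $D$, which yields $\sum_{k}\chi(|(D'|_{D})^{k}|)=\chi(D\setminus(D'\cap D))$. Subtracting produces both the first equality and the parenthetical equality of \eqref{equation-rr-bertini}. In the degenerate case where $D'$ (hence $D'|_{D}$) is empty, these two applications of \cref{conjecture-rr-euler} reduce to \cite[Conjecture 6.13]{demedrano2023chern} for $X$ and for $D$ respectively, which is the only place that hypothesis is used.

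Finally I would identify the difference $\chi(X\setminus D')-\chi(D\setminus(D'\cap D))$ with the relative Euler characteristic. Because $D\setminus D'=D\cap(X\setminus D')=D\setminus(D'\cap D)$ is a closed subspace of $X\setminus D'$ via the inclusion $D\hookto X$, the long exact sequence of the pair $(X\setminus D',D\setminus D')$ in singular cohomology, on taking Euler characteristics, gives
\begin{align}
\chi(H^{\bullet}(X\setminus D',D\setminus D';\mathbb{R}))=\chi(X\setminus D')-\chi(D\setminus D'),
\end{align}
which is the middle equality of \eqref{equation-rr-bertini}.

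I expect the only delicate points to be bookkeeping rather than substance: checking that condition~(3) is literally the moderate-position condition for $D'|_{D}$ (so that \cref{conjecture-rr-euler} applies on $D$ verbatim), and handling the empty cases of $D$ and $D'$ uniformly through \cite[Conjecture 6.13]{demedrano2023chern}. Everything of weight is carried by the two assumed conjectures and by \eqref{equation-rr-number-divisor}, so the remainder of the argument is formal.
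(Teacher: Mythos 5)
Your proof is correct and follows essentially the same route as the paper: \cref{conjecture-grr-divisor} yields the splitting \eqref{equation-rr-number-divisor}, and then \cref{conjecture-rr-euler} together with \cite[Conjecture 6.13]{demedrano2023chern} evaluates the two terms $\opn{RR}(X;D')$ and $\opn{RR}(D;D'|_{D})$, with the cited conjecture covering exactly the degenerate (empty) cases. Your extra verification of the relative-cohomology identity via the long exact sequence of the pair is sound and only makes explicit what the paper defers to a later remark (where it is proved sheaf-theoretically via $R\Gamma_{X\setminus D'}$).
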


\begin{proof}
We suppose \cref{conjecture-grr-divisor} is true.
From the assumption, 
$D'$ is sedentarity-$0$.

If \cref{conjecture-rr-euler} and 
\cite[Conjecture 6.13]{demedrano2023chern} is true,
then
\begin{align}
\opn{RR}(X;D'-D)=&\opn{RR}(X;D')-
\opn{RR}(D;D'|_{D}) \notag \\
=&\chi(X\setminus D')-\chi(D\setminus (D'\cap D)). \notag
\end{align}
Therefore, the proposition has been proved.
\end{proof}

\begin{remark}
The above proposition suggests that the most difficult part
for proving \cref{conjecture-rr-bertini} is obtaining a proof of
\cite[Conjecture 6.13]{demedrano2023chern}.
\end{remark}

Based on the above, we will generalize
\cref{theorem-rr-euler-surface}.

\begin{theorem}
\label{theorem-rr-bertini-surface}
Let $X$ be a compact tropical surface
and $(D,D')$ a pair of tropical submanifolds of
codimension $1$ in moderate position on $X$.
Then,
\begin{align}
\chi(X\setminus D')-\chi(D\setminus (D'\cap D))
=\frac{\opn{deg}((D'-D).(D'-D-K_X))}{2}+\chi(X).
\end{align}
In particular, \cref{conjecture-rr-bertini} is true 
when $X$ admits a Delzant face structure.
\end{theorem}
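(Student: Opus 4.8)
The plan is to split the left-hand side into its two terms, reduce each to an already-established result, and then verify the remaining intersection-theoretic identity by the adjunction formula. Since $D'$ is in moderate position on the surface $X$ (condition (1) of the moderate-position hypothesis), \cref{theorem-rr-euler-surface} applies directly to $D'$ and yields
\[
\chi(X\setminus D')=\frac{\opn{deg}((D'-K_X).D')}{2}+\chi(X).
\]

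Next, $D$ is a compact tropical curve, being a codimension-$1$ submanifold of the surface $X$, and by condition (3) the $0$-cycle $D'|_{D}$, whose support is $D'\cap D$, lies in moderate position on $D$. By \cref{example-permissible-point} this forces $D'\cap D\subset D_{\mathrm{reg}}$ and gives $\chi(D\setminus(D'\cap D))=\sharp(D'\cap D)+\chi(D)$. I would then identify $\sharp(D'\cap D)=\opn{deg}(D.D')$: by \cref{notation-power-divisor} the pushforward of $D'|_{D}$ along $D\hookto X$ equals $D.D'$, and $D'|_{D}$ is reduced because it is a submanifold of $D$, so its degree is exactly the number of points of its support. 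Subtracting the two expressions, the asserted formula becomes the numerical identity
\[
\frac{\opn{deg}((D'-K_X).D')}{2}-\opn{deg}(D.D')-\chi(D)=\frac{\opn{deg}((D'-D).(D'-D-K_X))}{2}.
\]
Expanding the right-hand side by bilinearity and symmetry of the intersection pairing and cancelling the common terms, the difference of the two sides collapses to $\tfrac12\opn{deg}(D.(D+K_X))+\chi(D)$, which vanishes by the adjunction formula for tropical curves on tropical surfaces (\cite[Theorem 4.11]{shaw2015tropical}; cf.\ the Example following \cref{conjecture-rr-c-euler}, where $\tfrac12\opn{deg}((D+K_X).D)=-\chi(D)$). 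This proves the main equation, with no Delzant hypothesis.

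For the final clause I would add the assumption that $X$ admits a Delzant face structure. Then \cref{corollary-ds-euler-rr} gives $\opn{RR}(X;D')=\chi(X\setminus D')$, and \cref{example-permissible-point} applied to the curve $D$ gives $\opn{RR}(D;D'|_{D})=\chi(D\setminus(D'\cap D))$. Since for a surface \cref{conjecture-grr-divisor} coincides with the established adjunction formula, the divisor reduction \eqref{equation-rr-number-divisor} holds unconditionally here, whence
\[
\opn{RR}(X;D'-D)=\opn{RR}(X;D')-\opn{RR}(D;D'|_{D})=\chi(X\setminus D')-\chi(D\setminus(D'\cap D)),
\]
the central equality of \cref{conjecture-rr-bertini}. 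The remaining equalities follow formally: the sum formula splits as $\sum_{k}\chi(|D'^{k}|)=\chi(X)+\chi(D')+\chi(|D'^{2}|)=\chi(X\setminus D')$ by \eqref{equation-iterated-euler} and $\sum_{k}\chi(|(D'|_{D})^{k}|)=\chi(D)+\sharp(D'\cap D)=\chi(D\setminus(D'\cap D))$, while the relative-cohomology term equals $\chi(X\setminus D')-\chi(D\setminus D')$ by the long exact sequence of the pair $(X\setminus D',D\setminus D')$, using $D\setminus D'=D\setminus(D'\cap D)$.

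I expect the only genuinely delicate points to be bookkeeping rather than conceptual: matching $\sharp(D'\cap D)$ with $\opn{deg}(D.D')$ through the projection formula, and checking that the reduction \eqref{equation-rr-number-divisor}, which in general needs \cref{conjecture-grr-divisor}, is indeed available in the surface case because there \cref{conjecture-grr-divisor} reduces to the known adjunction formula. Everything else is the routine algebraic identity verified above.
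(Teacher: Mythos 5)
Your proof is correct, and for the displayed identity it takes a more hands-on route than the paper. The paper's own proof is a three-line reduction through Riemann--Roch numbers: since $\dim X=2$, \cref{conjecture-grr-divisor} is just the known adjunction formula, so \eqref{equation-rr-number-divisor} holds and gives $\opn{RR}(X;D'-D)=\opn{RR}(X;D')-\opn{RR}(D;D'|_{D})$; writing $\opn{RR}(X;D'-D)-\opn{RR}(X;0)=\tfrac12\opn{deg}((D'-D).(D'-D-K_X))$ and then converting $\opn{RR}(X;D')-\opn{RR}(X;0)$ and $\opn{RR}(D;D'\cap D)$ into $\chi(X\setminus D')-\chi(X)$ and $\chi(D\setminus(D'\cap D))$ by \cref{theorem-rr-euler-surface} and the curve case yields the statement. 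You instead compute both Euler characteristics first ($\chi(X\setminus D')$ by \cref{theorem-rr-euler-surface}, and $\chi(D\setminus(D'\cap D))=\sharp(D'\cap D)+\chi(D)$ by \cref{example-permissible-point}, using condition (3) to place $D'\cap D$ in $D_{\mathrm{reg}}$), identify $\sharp(D'\cap D)=\opn{deg}(D.D')$ via \eqref{equation-projection-formula} and the reducedness of the submanifold $D'|_{D}$, and then verify the residual numerical identity by bilinear expansion together with the adjunction identity $\tfrac12\opn{deg}(D.(D+K_X))=-\chi(D)$. These are the same ingredients in different clothing: on a surface, \eqref{equation-rr-number-divisor} unwinds to exactly your bilinear expansion, because \cref{conjecture-grr-divisor} reduces there to adjunction. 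What your organization buys is the explicit observation that the main equation is pure intersection theory plus \cref{theorem-rr-euler-surface}, with no need for the Gysin/Todd-class formalism; what the paper's organization buys is that the theorem is exhibited as the surface instance of the Hirzebruch-style reduction, which would generalize to higher dimension conditionally on \cref{conjecture-grr-divisor}. For the Delzant clause the two arguments coincide (via \cref{corollary-ds-euler-rr}, the curve case, and \eqref{equation-rr-number-divisor}), and your verification of the sum formula through \eqref{equation-iterated-euler} and of the relative-cohomology term through the long exact sequence of the pair spells out details the paper leaves implicit; the compatibility of the intersection products used in the two sources, which both proofs need, is supplied by \cref{proposition-cycle-chern}.
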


\begin{proof}
Since $\dim X=2$, 
\cref{conjecture-grr-divisor} is true from
\cite[Theorem 5.2]{demedrano2023chern}.
From \cref{theorem-rr-euler-surface} and 
\eqref{equation-rr-number-divisor}, we have
\begin{align}
\frac{\opn{deg}((D'-D).(D'-D-K_X))}{2}=&
\opn{RR}(X;D')-\opn{RR}(X;0)-
\opn{RR}(D;D'\cap D) \\
=&\chi(X\setminus D')-\chi(X)-\chi(D\setminus (D'\cap D)).
\end{align}
\end{proof}

\begin{remark}
Let $X$ be an $n$-dimensional compact tropical manifold
and $(D,D')$ a pair of
codimension $1$ tropical submanifold in moderate position
on $X$.
In \cref{remark-c-infinity-divisor},
we stressed
the cohomology $H^{\bullet}(X\setminus D)$ of
the complement $X\setminus D$ is related with
the graded modules $\opn{LMD}^{\bullet}(X;s)$
associated with a permissible
$C^{\infty}$-divisor $s$ on $X$.
In this remark, we explain
$\chi(X\setminus D')-\chi(D\setminus (D'\cap D))$
is also the Euler characteristic of
the relative cohomology 
$H^{\bullet}(X\setminus D,D\setminus D';\mathbb{R})$
of the pair
$(X\setminus D', D\setminus D')$
\cite[Chapter IV. Definition 8.1]{MR842190}
(see also \cite[Chapter II. Proposition 12.3]{MR1481706}).

Let $\mathbb{R}_X$ is the constant sheaf of $\mathbb{R}$
on $X$ and 
$\catn{Mod}(\mathbb{R}_X)$ be the category of sheaves of 
$\mathbb{R}_X$-modules.
Let $Z$ be a locally closed subset of $X$ and
$j\colon Z\to X$ the inclusion map.
For a given $\mathcal{F}\in \catn{Mod}(\mathbb{R}_X)$,
let $(\mathcal{F})_{Z}\deq j_!j^{-1}\mathcal{F}$ 
\cite[Proposition 2.3.6]{MR1299726}.
The functor
$(\cdot)_{Z}\colon \catn{Mod}(\mathbb{R}_X) \to
\catn{Mod}(\mathbb{R}_X);
\mathcal{F}\to (\mathcal{F})_Z$ is exact and has
the right adjoint left exact functor 
$\Gamma_{Z}\colon \catn{Mod}(\mathbb{R}_X) \to
\catn{Mod}(\mathbb{R}_X)$
\cite[Definition 2.3.8]{MR1299726}.
If $Z$ is open, then
$\Gamma_{Z}\mathcal{F}\simeq
j_*j^{-1}\mathcal{F}$
\cite[Proposition 2.3.9 (iii)]{MR1299726}.
Therefore, the following exact sequence exists
\begin{align}
\label{equation-closed-open-exact}
0 \to (\mathbb{R}_X)_{X\setminus D}
\to \mathbb{R}_X \to (\mathbb{R}_X)_{D} \to 0.
\end{align}
The derived functor $R\Gamma_{X\setminus D'}$ also gives
the following exact triangle:
\begin{align}
R\Gamma_{X\setminus D'}((\mathbb{R}_X)_{X\setminus D})
\to  R\Gamma_{X\setminus D'}(\mathbb{R}_X)
\to R\Gamma_{X\setminus D'}((\mathbb{R}_X)_D)
\to R\Gamma_{X\setminus D'}((\mathbb{R}_X)_{X\setminus D})[1].
\end{align}

Let $i\colon D\to X$ be the inclusion map of $D$.
From 
\cite[(2.3.20)]{MR1299726}, we get
\begin{align}
R\Gamma_{X\setminus D'}((\mathbb{R}_X)_D)
&\simeq i_* R\Gamma_{D\setminus D'}\mathbb{R}_D, \\ 
\mathbb{H}^{\bullet}(X;R\Gamma_{X\setminus D'}((\mathbb{R}_X)_{X\setminus D}))
&\simeq H^{\bullet}(X\setminus D';(\mathbb{R}_{X\setminus D'})_{X\setminus (D\cup D')}).
\end{align}
The cohomology $H^{\bullet}(X\setminus D';(\mathbb{R}_{X\setminus D'})_{X\setminus (D\cup D')})$
is just the relative cohomology 
$H^{\bullet}(X\setminus D,D\setminus D';\mathbb{R})$
of the pair
$(X\setminus D', D\setminus D')$.
Hence, the following equation holds
\begin{align}
\chi(\mathbb{H}^{\bullet}(X;R\Gamma_{X\setminus D'}((\mathbb{R}_X)_{X\setminus D})))
&=\chi(X\setminus D')
- \chi(D\setminus (D'\cap D)).
\end{align}
Besides, from definition we can check
\begin{align}
\mathbb{H}^{\bullet}(X;R\Gamma_{X}
((\mathbb{R}_X)_{X\setminus D}))
\simeq H^{\bullet}_c(X\setminus D), \quad
\mathbb{H}^{\bullet}(X;R\Gamma_{X\setminus D'}
((\mathbb{R}_X)_{X}))
\simeq H^{\bullet}(X\setminus D').
\end{align}

Therefore, the graded module 
$\mathbb{H}^{\bullet}(X;R\Gamma_{X\setminus D'}
((\mathbb{R}_X)_{X\setminus D}))$
is a generalization of both
$H^{\bullet}_c(X\setminus D)$ and
$H^{\bullet}(X\setminus D)$, and
$\mathbb{H}^{\bullet}(X;R\Gamma_{X\setminus D'}
((\mathbb{R}_X)_{X\setminus D}))$ is also related
with graded modules associated with
permissible $C^{\infty}$-divisors that are
studied in \cite{tsutsui2023graded}.

We expect that \cref{conjecture-rr-bertini}
is also true for a pair $(D_1,D_2)$ of rational polyhedral
subspaces in moderate position on $X$ such that each $D_i$ is
a finite union of codimension $1$ tropical submanifolds whose
intersections satisfy a good condition.
\end{remark}

\appendix

\section{Compatibility of intersections of tropical cycles}
In this appendix, we note the compatibility of
intersections of tropical cycles in
\cite{MR4637248,demedrano2023chern,shaw2015tropical}.
\begin{proposition}
\label{proposition-two-intersection}
Let $(X,\mathcal{O}_X^{\times})$ be a tropical manifold
and $D_1,D_2\in \opn{Div}^{[0]}(X)$. Then,
the intersection $D_1*D_2$ in the sense of
\cite[\textsection 2.4]{demedrano2023chern} is equal
to $D_1\cdot D_2$ \cite[\textsection 3.4]{MR4637248}.
\end{proposition}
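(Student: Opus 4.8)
The plan is to reduce the assertion to a local, combinatorial statement on matroidal fans and then identify both products with the corner-locus construction. Both pairings are bilinear and local: the Gross--Shokrieh product is $D_1\cdot D_2=D_1\cdot \opn{div}_X(D_2)$, computed through the sheaf morphism $\opn{div}_X$ of \cref{proposition-divisor-map-injective} and the local pairing \eqref{equation-divisor-pairing}, whereas the product $D_1*D_2$ of \cite[\textsection 2.4]{demedrano2023chern} is defined on local cones via stable intersection. Hence it suffices to check the equality after restricting to an open subset isomorphic to an open piece of $\opn{LC}_x X\times \mathbb{T}^{m}\simeq L_M\times \mathbb{T}^{m}$ for some loopless matroid $M$. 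Since both products vanish on divisors that are globally integral affine linear, we may further assume that $D_1$ and $D_2$ are the Weil divisors associated with piecewise integral affine linear functions $\phi_1,\phi_2$ on $L_M$.

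On this local model I would compute both sides explicitly. By \cite[\textsection 3.4]{MR4637248}, the product $D_1\cdot \opn{div}_X(D_2)$ is the corner locus of $\phi_1$ on the cycle $\opn{div}(\phi_2)$, assigning to each codimension-two cone the weight coming from the balancing defect of $\phi_1$ along its star in $\opn{div}(\phi_2)$. The stable intersection $D_1*D_2$, defined through the diagonal and the fan displacement rule, coincides with this iterated corner-locus construction by the standard comparison of tropical intersection products (cf. \cite[Theorem 4.2]{MR3032930} and \cite[Theorem 4.11]{shaw2015tropical}), which applies because $L_M$ is balanced, $M$ being loopless. As both then assign the same integer weight to every codimension-two face, we obtain $D_1\cdot D_2=D_1*D_2$ on the chosen chart, and the equality on all of $X$ follows by locality.

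The main obstacle will be the precise matching of the two weight formulas on matroidal fans together with the bookkeeping for the sedentarity-carrying factor $\mathbb{T}^{m}$. One must verify the agreement of weights not only over the sedentarity-$0$ locus $L_M\times\R^m$ but also along the boundary strata, where both products are read off from the local cones of the faces at infinity. Here I would use that $\opn{div}_X$ is compatible with the stratifying projections and that both intersection products commute with these projections, reducing the boundary case to the sedentarity-$0$ case; once this reduction is made, the equality of weights is exactly the classical comparison cited above, and the proposition follows.
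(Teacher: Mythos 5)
Your outline coincides with the paper's: localize to charts, split off the sedentarity factor $\mathbb{T}^m$, reduce to principal divisors on the local model, and then appeal to a known comparison of intersection constructions. The gap is at the step that carries all the weight. You justify the identity of the stable intersection with the corner-locus construction by citing \cite[Theorem 4.2]{MR3032930} and \cite[Theorem 4.11]{shaw2015tropical}, but neither result is such a comparison: Theorem 4.11 of \cite{shaw2015tropical} is the adjunction formula for tropical curves in surfaces, and Theorem 4.2 of \cite{MR3032930} is a statement about Shaw's intersection product in matroidal fans. Moreover, as the paper itself points out in the remark following \cref{proposition-self-intersection}, transferring statements about that product to the pairing of \cite{MR4637248} presupposes exactly the compatibility you are trying to prove, so as written your central step is either unsupported or circular. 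The comparison that actually exists in the literature, and that the paper's proof invokes, is the theorem of \cite{MR3529085} and \cite{MR2887109} (as recalled in \cite[\textsection 2]{MR3032930}) that the corner-locus intersection of \cite{MR2591823} agrees with stable intersection in the sense of \cite{MR2149011,MR2275625} for cycles in $\mathbb{R}^n$, combined with \cite[Proposition 2.1.9]{shaw2011tropical}, which identifies the pairing of \cite[\textsection 2.4]{demedrano2023chern} with stable intersection in $\mathbb{R}^n$.

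Because those comparison theorems are stated for cycles in $\mathbb{R}^n$ rather than intrinsically on a matroidal fan, your proof is also missing the step that makes them applicable: one must realize the local model $V_x$ as a closed subset of an open subset $B_x$ of the vector space $T_x X$, represent the divisors by piecewise integer affine functions $f$ defined on $B_x$, and use the pushforward compatibility $i_*((f|_{V_x})\cdot D_0)=(f)\cdot i_* D_0$ for the closed inclusion $i\colon V_x\to B_x$, so that both pairings become pairings of cycles in the ambient vector space. Your reduction of the boundary strata is likewise only sketched ("compatible with the stratifying projections"); the paper does this concretely by choosing product charts $V_x\times W_x$ with $H^{1}(V_x\times W_x;\mathcal{O}_{V_x\times W_x}^{\times})=0$, on which every divisor is principal and splits as a product of a divisor on $V_x$ with $W_x$, reducing to the sedentarity-$0$ case. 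With the citations corrected and the pushforward step supplied, your argument becomes essentially the paper's proof.
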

\begin{proof}
\label{proposition-two-pairings}
The definition of
the intersection of Cartier divisors
of both are local.
Therefore, we only need to find an atlas
$\mathcal{U}\deq \{(\psi_i:U_i\to V_i)\}_{i\in I}$
of $X$
such that $U_i$ satisfies
\cref{proposition-two-intersection}.
For any $x\in X$, there exists
a standard chart \cite[Definition 7.2.10]{mikhalkin2018tropical},
i.e., a chart $ \psi_x
\colon U_x \to 
\opn{LC}_x X\times \mathbb{T}^{\opn{sed}_X(x)}$
such that $\psi_x(x)=(0,-\infty)\in
\opn{LC}_x X\times \mathbb{T}^{\opn{sed}_X(x)}$.
Furthermore, $U_x$ can be replaced with the product 
$V_x \times W_x$,
where $V_x$ is a contractible open neighborhood of $0$
in $\opn{LC}_x X$,
and $W_x$ is a contractible open neighborhood of
$-\infty$ in $\mathbb{T}^{\opn{sed}_X(x)}$.
Moreover, we also may assume 
$H^{1}(V_x\times W_x;\mathcal{O}_{V_x\times W_x}^{\times})=0$,
and there exists an open subset $B_x$ of $T_x X$ and
a piecewise integer affine linear function $f$
on $B_x$ such that 
$\opn{div}_{V_x\times W_x}(D_1|_{V_x\times W_x})=\opn{div}_{V_x}(f|_{V_x})\times W_x$.
Therefore, in the context of
\cite{MR4637248}, as well as in the context of
\cite{demedrano2023chern}, the intersection
of sedentarity-0 Cartier divisors at $x$  
can be regarded as the direct product of
the intersection of principal divisors on 
$V_x$ and $W_x$, and thus
it is enough to prove \cref{proposition-two-intersection}
when $\opn{sed}_X(x)=0$. From now on, we assume
$\opn{sed}_X(x)=0$ and $V_x$ is a closed subset of
$B_x$. Let $i \colon V_x\to B_x$ be the closed inclusion
map of $V_x$. Then, for any $D_0\in \opn{Div}(V_x)$ 
\begin{align}
i_*((f|_{V_x})\cdot D_0)=(f)\cdot i_*D_0
\in Z_{n-2}(B_x).
\end{align}
Since $B_x$ is an open subset of a finite dimensional
real vector space, the pairing in the sense of 
\cite[\textsection 3.4]{MR4637248} is
determined by \cite[Definition 3.4]{MR2591823}.
By the same logic, the pairing in the sense
of \cite[\textsection 2.4]{demedrano2023chern}
is determined by the stable intersection of
tropical Cartier divisors and tropical cycles
in $\mathbb{R}^{n}$ \cite[Definition 4.4]{MR2275625}
from \cite[Proposition 2.1.9]{shaw2011tropical}.

As mentioned in \cite[\textsection 2]{MR3032930},
it has been shown in \cite{MR3529085} and \cite{MR2887109}
that the intersection of tropical cycles on
$\mathbb{R}^n$ in the sense of \cite{MR2591823}
coincides with the intersection in the sense of
\cite{MR2149011,MR2275625}.
\end{proof}

Next, we will see the compatibility of
the intersection number of tropical $1$-cycles
in compact surfaces (admitting a global face structure)
in the sense of \cite{shaw2015tropical,demedrano2023chern}
and in our sense. Before proving it,
we recall eigenwave homomorphism from
\cite{MR3961331,MR3894860}.
Every rational polyhedral space $(X,\mathcal{O}_X^{\times})$
has the following exact sequence:
\begin{align}
\label{equation-tropical-exp}
0\to \mathbb{R}_X \to 
\mathcal{O}_X^{\times} \to \Omega_{\mathbb{Z},X}^{1}\to 0.
\end{align}
The exact sequence \eqref{equation-tropical-exp}
defines
the connecting homomorphism
$\phi \colon H^{1,1}(X;\mathbb{Z})
\to H^{0,2}(X;\mathbb{R})$.
The connecting homomorphism
$\phi$ is the dual of the
eigenwave homomorphism
$\hat{\phi} \colon
H_{n-1,n-1}^{\mathrm{BM}}(X;\mathbb{Z})
\to H_{n,n-2}^{\mathrm{BM}}(X;\mathbb{R})$
\cite[(5.2)]{MR3330789}
(see also \cite[Definition 2.9]{MR3894860})
when $X$ is a tropical manifold admitting
a global face structure.
In fact, both of them are compatible with 
the Poincar\'e duality for tropical manifolds
admitting a global face structure
\cite[Lemma 5.13]{MR3894860}.

\begin{proposition}
\label{proposition-cycle-chern}
Let $X$ be a compact tropical surface admitting a global face
structure and $D_1,D_2$
tropical $1$-cycles on $X$. Then,
\begin{align}
\label{equation-two-intersections}
\opn{deg}(D_1 .  D_2)=\int_X \PD{D_1}\cdot \PD{D_2}.
\end{align}
where $D_1. D_2$ is the intersection of $D_1$ and $D_2$
in the sense of \cite{shaw2015tropical}.
\end{proposition}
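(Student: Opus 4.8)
The plan is to read both sides of \eqref{equation-two-intersections} as the trace of a single degree-$(2,2)$ class and to pass between the cycle-level intersection product of \cite{shaw2015tropical} and the cap-product formulation of Poincar\'e duality supplied by \cite{MR4637248}. First I would reduce to the case in which $D_1$ is a tropical Cartier divisor in $\opn{Div}^{[0]}(X)$. Both sides of \eqref{equation-two-intersections} are bilinear in $(D_1,D_2)$, and by \cite[Proposition 3.27]{shaw2015tropical} every codimension-$1$ tropical cycle on a tropical manifold is a tropical Cartier divisor, so the cycle $D_1$ may be represented by an element of $\opn{Div}^{[0]}(X)$; for such a representative the identity $\PD{D_1}=c_1(\mathcal{L}(D_1))$ recorded in \cref{notation-power-divisor} is available, while $\PD{D_2}=\opn{PD}_X(\opn{cyc}_X(D_2))$ needs no Cartier hypothesis on $D_2$.

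The core is then a direct manipulation of the trace map. Writing $a_X\colon X\to\{\mathrm{pt}\}$ for the structure morphism and using that $\opn{PD}_X$ is inverse to $\cdot\frown[X]$ (so $\PD{D_2}\frown[X]=\opn{cyc}_X(D_2)$), together with the associativity of the cap and cup products, I would compute
\begin{align}
\int_X \PD{D_1}\cdot\PD{D_2}
&=a_{X*}\bigl((\PD{D_1}\cdot\PD{D_2})\frown[X]\bigr) \notag \\
&=a_{X*}\bigl(\PD{D_1}\frown(\PD{D_2}\frown[X])\bigr) \notag \\
&=a_{X*}\bigl(c_1(\mathcal{L}(D_1))\frown\opn{cyc}_X(D_2)\bigr) \notag \\
&=a_{X*}\opn{cyc}_X(D_1\cdot D_2)
=\opn{deg}(D_1\cdot D_2), \notag
\end{align}
where the fourth equality is the formula $\opn{cyc}_X(D\cdot A)=c_1(\mathcal{L}(D))\frown\opn{cyc}_X(A)$ of \cite[Proposition 5.12]{MR4637248}, and $D_1\cdot D_2$ denotes the divisor--cycle intersection of \cite[\textsection 3.4]{MR4637248}. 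Since $a_X$ is proper, $a_{X*}\circ\opn{cyc}_X=\opn{cyc}_{\{\mathrm{pt}\}}\circ a_{X*}$, and the pushforward of the $0$-cycle $D_1\cdot D_2$ to a point is its degree under $H^{\mathrm{BM}}_{0,0}(\{\mathrm{pt}\};\mathbb{Z})\simeq\mathbb{Z}$; this gives the last equality.

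It remains to match $\opn{deg}(D_1\cdot D_2)$ for the intersection of \cite{MR4637248} with Shaw's intersection number $\opn{deg}(D_1.D_2)$. This is exactly the content of \cref{proposition-two-intersection}, which shows that the divisor pairing of \cite{MR4637248} coincides with the pairing of \cite{demedrano2023chern}, the latter agreeing with Shaw's; combining the identifications yields $\opn{deg}(D_1\cdot D_2)=\opn{deg}(D_1.D_2)$ and completes the argument.

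I expect the main obstacle to be the reduction step rather than the cohomological bookkeeping: a general $1$-cycle need not literally lie in $\opn{Div}^{[0]}(X)$ (the point $\{-\infty\}\subset\mathbb{T}$ recalled earlier is not such a divisor), so the real content is to verify that the cycle-theoretic Poincar\'e dual $\PD{D_1}$ is genuinely realized as a first Chern class and that the Poincar\'e duality used to define Shaw's pairing agrees with the cap-product duality of \cite{MR4637248}. The global face structure hypothesis enters precisely here, through the compatibility of the eigenwave homomorphism $\hat{\phi}$ (dually, the connecting map $\phi\colon H^{1,1}(X;\mathbb{Z})\to H^{0,2}(X;\mathbb{R})$) with Poincar\'e duality from \cite[Lemma 5.13]{MR3894860}: this is what guarantees that the two duality frameworks coincide and that the obstruction to writing $\PD{D_1}$ as $c_1(\mathcal{L}(D_1))$ vanishes for honest tropical cycles, so that the Cartier-divisor computation above applies verbatim.
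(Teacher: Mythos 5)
Your overall route is the same as the paper's: reduce to divisors in $\opn{Div}(X)^{[0]}$, run the trace-map computation via $\opn{cyc}_X(D\cdot A)=c_1(\mathcal{L}(D))\frown\opn{cyc}_X(A)$ from \cite[Proposition 5.12]{MR4637248}, and match the resulting intersection with Shaw's through \cref{proposition-two-intersection}. The middle computation is correct and is exactly what the paper does. The problem is the reduction step, which you yourself flag as the main obstacle but do not actually close. First, the citation you lead with does not do what you claim: \cite[Proposition 3.27]{shaw2015tropical} makes a codimension-$1$ cycle a Cartier divisor \emph{in Shaw's sense}, not an element of $\opn{Div}(X)^{[0]}$, and the paper stresses that these notions genuinely differ (the point $\{-\infty\}\subset\mathbb{T}$ is Shaw--Cartier but not in $\opn{Div}(\mathbb{T})^{[0]}$); only membership in $\opn{Div}(X)^{[0]}$ licenses $\PD{D_1}=c_1(\mathcal{L}(D_1))$ and the pairing of \cite[\textsection 3.4]{MR4637248}. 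Your last paragraph names the right mechanism, but incompletely: what is needed is not merely the compatibility of $\hat{\phi}$ with Poincar\'e duality (\cite[Lemma 5.13]{MR3894860}) but the theorem that cycle classes lie in the \emph{kernel} of the eigenwave homomorphism (\cite[Theorem 5.4]{MR3330789}, or \cite[Theorem 1.1]{MR3894860}); combined with the exact sequence $0\to\mathbb{R}_X\to\mathcal{O}_X^{\times}\to\Omega^{1}_{\mathbb{Z},X}\to 0$ and the surjectivity of $\opn{Div}(X)^{[0]}\to\opn{Pic}(X)$, this produces representatives $D'_i\in\opn{Div}(X)^{[0]}$ with $[D'_i]=[D_i]$. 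Compatibility alone gives no vanishing statement.

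The more serious gap comes after the replacement. Even granting $D'_i\in\opn{Div}(X)^{[0]}$ with $[D'_i]=[D_i]$, your computation yields $\int_X\PD{D_1}\cdot\PD{D_2}=\int_X\PD{D'_1}\cdot\PD{D'_2}=\opn{deg}(D'_1. D'_2)$, since $\PD{D'_i}=\PD{D_i}$ as cohomology classes. But the left-hand side of \eqref{equation-two-intersections} is Shaw's intersection number of the \emph{original} cycles $D_1,D_2$, and nothing in your argument shows $\opn{deg}(D_1. D_2)=\opn{deg}(D'_1. D'_2)$; the phrase ``applies verbatim'' silently assumes that Shaw's pairing depends only on the Borel--Moore cycle class. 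That invariance is true but must be supplied: the paper invokes \cite[Proposition 3.34]{shaw2015tropical} precisely for this step. As written, your proof establishes the identity for the replacement divisors, not for $D_1,D_2$ as claimed.
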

\begin{proof}
Let $i=1,2$.
From \cite[Theorem 5.4]{MR3330789}
(or more generally \cite[Theorem 1.1]{MR3894860}),
$[D_i]\deq\opn{cyc}_X(D_i)$ is in the kernel 
of the eigenwave homomorphism
$\hat{\phi} \colon
H_{1,1}^{\mathrm{BM}}(X;\mathbb{Z})
\to H_{2,0}^{\mathrm{BM}}(X;\mathbb{R})$
(see also \cite[Theorem 5.13]{MR4637248}
to verify that the cycle map
in \cite[Definition 4.13]{MR3894860}
is equivalent to the cycle map in
\cite[Definition 5.4]{MR4637248}).
Since $\hat{\phi}$ is compatible with
the connecting homomorphism
$\phi \colon H^{1,1}(X;\mathbb{Z})
\to H^{0,2}(X;\mathbb{R})$ \cite[Lemma 5.13]{MR3894860},
we can find $D'_i\in \opn{Div}^{[0]}(X)$
such that 
$[D'_i]=c_1(\mathcal{L}(D'_i))\frown [X]=[D_i]$
from \cite[Proposition 5.12]{MR4637248}.
Then, from 
\cite[Proposition 3.34]{shaw2015tropical} we get
\begin{align}
D_1.D_2=D_1'.D_2', \quad 
\int_X \PD{D_1}\cdot \PD{D_2}
=\int_X \PD{D_1'}\cdot \PD{D_2'}.
\end{align}
Therefore, we may assume $D_1$ and $D_2$ are in
$\opn{Div}(X)^{[0]}$.
Moreover, the definition of trace map,
\cite[Proposition 5.12]{MR4637248}, and the projection
formula deduce
\begin{align}
\int_X \PD{D_1}\cdot \PD{D_2}
&=a_{X*}(c_1(\mathcal{L}(D_1))\frown [D_2])
=a_{X*}(\opn{cyc}_X(D_1\cdot D_2)).
\end{align}
By \cite[Theorem 3.1.7]{shaw2011tropical},
the intersection number of tropical $1$-cycles
in \cite{shaw2015tropical} is equivalent to
that in \cite{shaw2011tropical}.
Furthermore, the intersection of
$D_1,D_2\in \opn{Div}^{[0]}(X)$ defined in
\cite[\textsection 2.4]{demedrano2023chern},
is based on \cite{shaw2011tropical},
so the intersection in 
\cite{demedrano2023chern} and 
that in \cite{shaw2015tropical} are
compatible with each other.
From \cref{proposition-two-intersection},
the intersection of $D_1$ and $D_2$ in
\cite{demedrano2023chern} equivalent to
\cite{MR4637248}, so we obtain
\cref{equation-two-intersections}.
\end{proof}

\bibliography{tropical-complement}
\bibliographystyle{amsalpha}

\end{document}